\newtheorem{theorem}{Theorem}[section]
\newtheorem{corollary}[theorem]{Corollary}
\newtheorem{lemma}[theorem]{Lemma}
\newtheorem{prop}[theorem]{Proposition}
\newtheorem*{thmA}{Theorem A}
\theoremstyle{definition}
\newtheorem{example}[theorem]{Example}
\newtheorem{question}[theorem]{Question}
\newtheorem{examples}[theorem]{Examples}
\newtheorem{rem}[theorem]{Remark}
\newtheorem{definition}[theorem]{Definition}
\newcommand{\dist}{\operatorname{dist}}
\newcommand{\spann}{\operatorname{span}}
\newcommand{\C}{\mathbb{C}}
\newcommand{\Y}{Y}
\newcommand{\N}{\mathbb{N}}
\newcommand{\R}{\mathbb{R}}
\newcommand{\Z}{\mathbb{Z}}
\newcommand{\A}{\mathbb{A}}
\newcommand{\Der}{\operatorname{Der}}
\newcommand{\pt}{\partial}
\newcommand{\dbar}{\bar{\partial}}
\newcommand{\cB}{\mathcal{B}}
\newcommand{\cD}{\mathcal{D}}
\newcommand{\cH}{\mathcal{H}}
\newcommand{\cJ}{\mathcal{J}}
\newcommand{\cG}{\mathcal{G}}
\newcommand{\cQ}{\mathcal{Q}}
\newcommand{\cO}{\mathcal {O}}
\newcommand{\tpsi}{\tilde\psi}
\newcommand{\Hdot}{H_0}
\newcommand{\h}{H}
\newcommand{\hdot}{H_0}
\newcommand{\aaaa}{a}
\newcommand{\wh}{\widehat}
\newcommand{\uu}{u}
\newcommand{\vv}{v}
\newcommand{\GH}{g}
\newcommand{\toe}{\tau}
\newcommand{\beqn}{\begin{equation}}
\newcommand{\neqn}{\end{equation}}
\newcommand{\Carl}{\operatorname{Carleson}}
\renewcommand{\Re}{\operatorname{Re}}
\renewcommand{\Im}{\operatorname{Im}}
\newcommand{\Mod}{\operatorname{Mod}}
\newcommand{\Etwo}{E^2}
\newcommand{\al}{\alpha}
\newcommand{\be}{\beta}
\newcommand{\de}{\delta}
\newcommand{\ga}{\gamma}
\newcommand{\eps}{\varepsilon}
\newcommand{\la}{\lambda}
\newcommand{\si}{\sigma}
\newcommand{\sm}{\setminus}
\newcommand{\wt}{\widetilde}
\newcommand{\Ga}{\Gamma}
\newcommand{\Om}{\Omega}
\newcommand{\supp}{\operatorname{supp}}
\newcommand{\Hs}{\mathcal{H}^s}
\newcommand{\Hscont}{{\mathcal{H}}^s_{\rm cont}}
\newcommand{\Honecont}{{\mathcal{H}}^1_{\rm cont}}
\newcommand{\Bxm}{B_{-}}
\newcommand{\Bxp}{B_{+}}
\newcommand{\Omc}{\Om^c}
\newcommand{\LtwomuH}{L^2(\mu, \Hdot)}
\begin{document}


\title[Spectral Dissection]
{Spectral dissection  of finite rank perturbations
of normal operators}

\dedicatory{To Dan Virgil Voiculescu on the occasion of his seventieth birthday}

\author{Mihai Putinar {\protect \and} Dmitry Yakubovich}
\address{Mihai Putinar, University of California at Santa Barbara, CA,
USA and Newcastle University, Newcastle upon Tyne, UK}
 \email{mputinar@math.ucsb.edu, mihai.putinar@ncl.ac.uk}

\address{Dmitry Yakubovich, Departamento de Matem\'aticas,
Universidad \newline Aut\'onoma de Madrid, and
ICMAT (CSIC - UAM - UC3M - UCM), Cantoblanco 28049, Spain}
\email{dmitry.yakubovich@uam.es}

 \thanks{
	The authors acknowledge partial support by Spanish Ministry of Science, Innovation and
	Universities (grant no. PGC2018-099124-B-I00),
	the ICMAT Severo Ochoa project SEV-2015-0554 of the Spanish Ministry of Economy and
	Competitiveness of Spain and the European Regional Development
	Fund. The first author expresses his gratitude to the Universidad Autonoma de Madrid for hospitality.
	We are both grateful to the Mittag-Leffler Institute for offering the opportunity to progress on this project.
}

\begin{abstract}
	Finite rank perturbations $T=N+K$ of a bounded normal operator $N$ acting on a separable Hilbert space
	are studied thanks to a natural functional model of $T$; in its turn the functional model solely relies on a perturbation matrix/ characteristic function previously defined
	by the second author.
	Function theoretic features of this perturbation matrix encode in a closed-form the spectral behavior of $T$.
	Under mild geometric conditions on
	the spectral measure of $N$ and some smoothness constraints on $K$ we show that the operator $T$ admits
	invariant subspaces, or even it is decomposable.
	
\end{abstract}


\subjclass[2010]{
	Primary 47A55,  \ \ Secondary 47B20, 47A45, 30H10, 47A15}


\keywords{
	normal operator, perturbation determinant, decomposable operator, 
	Cauchy transform, functional model, Bishop's property $(\beta)$ 
}


\maketitle

\section{INTRODUCTION}

Finite rank perturbations of Hilbert space operators were studied for at least a century, for instance for their far reaching connections
with function theory of a complex variable, boundary value problems of mathematical physics
or for applications to quantum theory. It is sufficient to mention the case of
dissipative operators with rank-one imaginary part or the celebrated phase-shift and
related perturbation determinant, see \cite{GK} for the golden era references, or \cite{Si} for more recent developments.
The booming topics of Aleksandrov-Clark measures \cite{PS} and the resurrection of
Aronszjan-Donoghue theory for matrix valued measures associated to finite rank perturbations of
self-adjoint operators \cite{LT,FL} are two other notable examples.
To name only one less known, additional relevant ramification: an apparently
non-related open problem of approximation theory, known as
Sendov conjecture, can be translated into spectral estimates of rank-two
perturbations of normal matrices, see \cite{KPPSS}.

Let $\cH$ be a complex, separable Hilbert space. The class of operators under study is
\beqn
\label{perturb-T}
T = N+ \sum_{k=1}^m \uu_j\otimes \vv_j = N + \sum_{k=1}^m \uu_j \langle \cdot, \vv_j \rangle,
\neqn
where $N$ is a normal operator on $\cH$ and the finite rank summand is subject to certain ``smoothness" conditions.
To fix ideas we first assume that $N$ has spectral multiplicity $1$. According to the Spectral Theorem, $N$ is unitarily equivalent to
the multiplication operator by the complex variable:
\[
Nf(z)=zf(z)\quad \text{on } \  \cH=L^2(\mu).
\]
In this functional model setting the announced ``smoothness'' properties of the functions
$\uu_j$, $\vv_j$ will be quite natural, for instance imposing their uniform boundedness. The case of general spectral multiplicity of $N$ is
covered by our  main results, with notation and conventions described in Section~\ref{the-model}.

The second author has already treated
smooth trace class perturbations
in the case of an absolutely continuous spectral measure (with respect to
area measure) \cite{Y1}.  A natural quotient model of $T$, defined in terms of
certain Sobolev-type spaces was introduced there. The present article builds on \cite{Y1} by proposing
a quotient functional model for a wide
class of spectral measures $\mu$. The concrete function theoretic features of the quotient model provide the announced spectral decomposition results.
In the present article we only treat finite rank perturbations due to inherent technical complications.

To give a preview of the nature of the functional model we propose in this work, we consider the simplest case
$$ T = M_z + u(z) \langle \cdot, v \rangle : L^2(\mu) \longrightarrow L^2(\mu),$$
with bounded measurable functions $u, v$ defined on the support $\sigma$ of the positive measure $\mu$.
Let $\Om$ be a domain with smooth boundary containing $\sigma$, so that the perturbation function
$$ \psi(z) = 1 +\int \frac{u(w)\overline{v(w)} d\mu(w)}{w-z},$$
does not vanish outside $\Om$.
Define the model space
${\rm Mod}(\Om)$ of Cauchy transforms plus analytic functions
$$  \int \frac{x(w)\overline{v(w)} d\mu(w)}{w-z} + b(z), \  z \in \Om,$$
where $x \in L^2(\mu)$ and $b(z)$ is analytic in the associated Hardy space $H^2(\Om)$.
This space is complete with respect to the Sobolev type norm
$$ \| x \|^2_{2,\mu} + \| b\|^2_{2, \partial \Om}.$$
The reader can immediately verify that the operator
$$ x \mapsto  \int \frac{x(w)\overline{v(w)} d\mu(w)}{w-z} $$
intertwines $T$ and multiplication by the complex variable on the quotient space
${\rm Mod}(\Om)/\psi {\rm Mod}(\Om).$
This precise similarity transform allows us to infer spectral decomposition properties of $T$ from function and measure theoretic results.

Our inquiry is affiliated to a series of recent developments. For instance, in a series of papers \cite{FJKP07}--\cite{FJKP11}, Foia\c{s}, Jung, Ko and Pearcy
considered rank one perturbations of normal operators with a
discrete spectral measure $\mu$. Their technique of
cutting the spectrum of the perturbation relied on carefully chosen integration contours applied to localized
resolvent, consequently producing appropriate Riesz projections. Later, their results were extended
by Fang and Xia \cite{FX}, and by Klaja \cite{Klaja-JOT} to finite rank and compact perturbations. Klaja also
has some results \cite{Klaja-JOT} for a non-discrete measure $\mu$, provided there is a Jordan curve $\Ga$ with certain
continuity and growth conditions
on the local resolvents $(N-\la)^{-1}u_j$ and $(N^*-\bar \la)^{-1}v_j$, where $\la\in \Ga$.

An earlier work on the same subject is \cite{Ionascu}.
And even before, studies of similar flavor go back to \cite{Ch, Sch}; see also the references in \cite{Y1}.

In \cite{Klaja-IEOT}, Klaja proves that if $N$ is diagonalizable and its spectrum is a perfect set,
then $N$ possesses a rank one perturbation without eigenvalues. If $N$ is compact and self-adjoint, this
is no longer true in general; the criterion for the existence of rank one perturbation of this type
was given in \cite{BY1}.

On a related, but totally different line of research, weakly convergent integrals of localized resolvents of operators belonging to a $II_1$ factor
appeared in the works of Haagerup and Schultz \cite{HS} and followers \cite{DSZ}.
These integrals provided Riesz type projections and ultimately unveiled a rich spectral
decomposition behavior.

Rank-one perturbations of normal operators were intensively studied during the last years
by Baranov and his collaborators (see \cite{ABB,Ba}), in particular, by
Baranov and the second author \cite{BY1}--\cite{BY3}. These works make use of a de Branges type functional model
and heavily rely on the theory of entire functions.

By far from being exhaustive, more references to old and new analyses
of finite rank perturbations of linear transforms are scattered through the present article.

Our aims are:

\begin{itemize}
	\item To find conditions for possibly ``dissecting'' the spectrum of $T$
	along a curve. Given a
	domain $\Om$ containing $\si(T)$, such that
	${\overline\Om}={\overline\Om}_1\cup {\overline \Om}_2$, the domains $\Om_1$ and $\Om_2$ are disjoint
	and have piecewise smooth boundaries,
	our Theorem~\ref{thm-direct-sum} asserts that, under additional
	conditions, there exists a direct sum decomposition $T=T_1\dotplus T_2$, satisfying the spectral localization conditions
	$\si(T_j)\subset \overline\Om_j$.

	\item To deduce from this fact sufficient conditions for the
	existence of
	invariant subspaces.
	
	\item To give sufficient conditions for the decomposability of $T$
	in the sense of Foia\c{s} \cite{Fo}.
\end{itemize}

While we do not seek in the present article the most general conditions imposed on the spectral measure $\mu$
and/or the finite rank perturbations, our approach gives rise to challenging questions in geometric function
theory. Some of these problems are
summarized in the last section.

The contents is the following. Section 2 contains some preliminaries of geometric measure theory and function theory of a complex variable.
There we introduce the concept of dissectible Borel measure to become the key technical tool for the rest of the article. Section 3 is devoted to the construction of the
Sobolev space type functional model for the perturbed operator. Our main spectral dissection theorem is stated in Section 3. In Section 4 we provide proofs for the geometric measure theory lemmas and in Section 5 we turn to the proofs
involving the functional model. Section 6 is focused on the existence criterion of a non-trivial invariant subspace for the perturbed operator. In Section 7 we recall some basic terminology and facts from local spectral theory; there we propose the notion of {\it dissectible operator}, slightly stronger than decomposable operator. In the same section
we provide sufficient criteria for the perturbed operator to be either dissectible, or only to possess Bishop's property $(\beta)$. In Section 8 we formulate a few open questions.

We dedicate this article to Dan Virgil Voiculescu whose exceptional insight into perturbation theory of linear operators has
reformed modern mathematical analysis. His contributions to the subject span more than four decades, to cite only \cite{Vo1,Vo2}.

\section{PRELIMINARIES}

This section collects a few function and measure theoretic results referred to in the sequel.
We also introduce some necessary notation and terminology.

Henceforth we rely on the standard dyadic system $\cD$ of squares
in $\C$ with sides parallel to coordinate axes.
The $k$th generation $\cD_k$
consists of squares of the form
$[m 2^{-k}, (m+1) 2^{-k}) \times [n 2^{-k}, (n+1) 2^{-k}) $; here
$k,m,n\in\Z$. Then
\[
\cD=\bigcup_{k\in\Z}\cD_k.
\]
A square $Q$ is dyadic if
$Q\in \cD_k$ for some $k$; in this case we set $|Q|=2^{-k}$ to be its
side length.

For any dyadic $Q\in\cD$ and any $c>0$, we denote
by $\overline{Q}$ the closure of $Q$,
by $Q^o$ its interior and by
$cQ$ the square homothetic to  $Q$ with the same center, such that
$|cQ|=c|Q|$.

We denote by $\Hs$ the $s$-dimensional Hausdorff measure and by
$\Hscont$ the $s$-dimensional Hausdorff content, calculated with
respect to the dyadic system~$\cD$.

\begin{definition}
	\label{def-s-nice1} Throughout this article $h$ stands for a
	nonnegative increasing function defined on $[0,+\infty)$
	satisfying $h(t)>0$ for $t>0$, $\lim_{\infty} h= +\infty,$ and
	\beqn \label{int-condn-h} \int_{0}^{1} t^{-2} h(t)\, dt <\infty.
	\neqn Such a function $h$ is called \textit{a scale function}.
\end{definition}

The main example is
\[
h(t)=t^\aaaa,
\]
where $1<\aaaa\le 2$.
To capture finer effects (related to measures
``close'' to the arc length measure in the sense of dimension),
one can also consider functions of the form
$h(t)= t\cdot \log^{-a} (2+1/t)$, $a>1$.

\begin{definition}
	Let $\nu$ be a finite positive measure on $\C$ with compact support.
	
	1) We say that a dyadic square $Q$ is
	\emph{light (w.r. to $\nu$)} if
	\[
	\nu(Q)\le h(|Q|),
	\]
	and \emph{heavy} in the opposite case.
	
	2) Given a measure $\nu$, consider the increasing
	family of open sets
	\begin{equation}
	\label{eq3n}
	\cG_s(\nu)=\bigcup_{Q \; \text{dyadic and heavy}, \;
		|Q|\le s} \; 10 Q^o, \quad \text{where } s>0.
	\end{equation}
	We say that $\nu$ is \emph{dissectible} if
	\[
	\lim_{s\to 0^+} \Honecont(\cG_s(\nu)) = 0.
	\]
	In this case, we will also use the terms $h$-\emph{dissectible},
	or $\aaaa$-\emph{dissectible}, for the case $h(t)=t^\aaaa$.
\end{definition}

Notice that condition
\[
\sum_{Q \;\text{dyadic and heavy}} \, |Q|<\infty
\]
is sufficient for $\nu$ to be $h$-dissectible.

We remark that for any finite measure $\nu$,
every sufficiently large square in $\cD$ is light.
This implies the following observation.
Suppose $\nu$ is a $h$-dissectible measure as above. Choose any function
$h_1$ meeting the same conditions as $h$ and such
that $h_1(t)/h(t)\to \infty$ as
$t\to 0^+$.
Then for any nonnegative function $f\in L^\infty(\nu)$,
$f\nu$ is $h_1$-dissectible.

In what follows, we
set
\beqn
\label{nu}
\nu=\big(\sum_j |\uu_j|^2+|\vv_j|^2\big)\mu.
\neqn
Most of our results require the following condition:

\bigskip

\begin{enumerate}
	
	\item[(D)]
	{\bf The measure $\nu$ is dissectible.}
\end{enumerate}

\bigskip

Given a complex measure $\tau$ of finite total variation, we put
\begin{equation}
\label{eq1n}
C(\tau)(z) :=\int \frac{ d\,\tau(t)}{t-z}
\end{equation}
to be its Cauchy transform.
It is known that $C(\tau)$ is defined at least as an element of $L^1_\text{loc}(\C)$.

As shown by Verdera \cite{V} and by Mattila and Melnikov \cite{MM},
for any measure $\nu$ and any Ahlfors-David regular curve $\ga$,
the Cauchy integral $C(\nu)$ exists a.e. on $\ga$ and satisfies
a weak $L^1$ estimate on $\ga$. 
Our constructions below require a stronger regularity
of the Cauchy integrals $C(f\nu)$.
In particular, we will use the following lemma,
which reveals the importance of dissectible measures in our study.

\begin{lemma}
	\label{L-infty-lemma}
	For any positive measure $\nu$, any $s\in (0,1]$ and any
	$f\in L^\infty(\nu)$, one has
	\beqn
	\label{estim-eta}
	|C(f\nu)(z)|\le K s^{-1} \|f\|_{\infty}, \quad z\in\C\setminus \cG_s(\nu),
	\neqn
	where $K$ is a constant only
	depending on $\nu$.
	Moreover, $C(f\nu)$ is a continuous function on $\C\setminus \cG_s(\nu)$.
\end{lemma}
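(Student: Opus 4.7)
The plan is to bound $|C(f\nu)(z)| \le \|f\|_\infty \int d\nu(w)/|w-z|$ and control the Cauchy-type integral $I(z) := \int d\nu(w)/|w-z|$ uniformly for $z$ outside $\cG_s(\nu)$; continuity will then follow from a standard splitting-and-dominated-convergence argument.

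The geometric core is the passage from dyadic to Euclidean estimates. If $z \in \C \setminus \cG_s(\nu)$ and $2^{-k} \le s$, then every dyadic square $Q$ of side $2^{-k}$ with $z \in 10 Q^o$ is necessarily light, i.e.\ $\nu(Q) \le h(2^{-k})$; a short $\ell^\infty$ triangle inequality shows that any $w$ with $|w-z| < 2^{-k}$ lies in at least one such $Q$, and that the number of such $Q$ is bounded by an absolute constant (at most $100$). Consequently the distribution function $g(r) := \nu\bigl(\{w : |w-z| < r\}\bigr)$ satisfies $g(2^{-k}) \le 100\, h(2^{-k})$, hence by monotonicity $g(r) \le 100\, h(2r)$ for every $r \le R$, where $R := 2^{-k_0} \in (s/2, s]$ is the largest dyadic number not exceeding $s$.

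Split $I(z)$ at radius $R$. The far part is controlled by $\nu(\C)/R \le 2\nu(\C)/s$. Integration by parts and the substitution $t = 2u$ give
\[
\int_{|w-z|<R} \frac{d\nu(w)}{|w-z|} = \frac{g(R)}{R} + \int_0^R \frac{g(u)}{u^2}\,du \le \frac{200\, h(2R)}{s} + 200 \int_0^{2R} \frac{h(t)}{t^2}\, dt,
\]
where the first estimate uses $R > s/2$ and the second term is finite by the scale-function condition \eqref{int-condn-h}. Since $s \le 1$, the additive constants are absorbed into a bound of the form $K/s$, yielding \eqref{estim-eta}. For the continuity claim, fix $z_0 \in \C \setminus \cG_s(\nu)$ and, for $\epsilon \le s/2$, split $C(f\nu)(z) - C(f\nu)(z_0)$ at $|w-z_0| = \epsilon$. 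For $z$ with $|z-z_0| < \epsilon/2$, the inner part is dominated by $2\|f\|_\infty \int_{|w-z|<2\epsilon} d\nu/|w-z|$, which the same truncated estimate bounds by a multiple of $h(4\epsilon)/\epsilon + \int_0^{4\epsilon} h(t)/t^2\, dt$; both terms vanish as $\epsilon \to 0$ uniformly in $z \in \C \setminus \cG_s(\nu)$, the first since $h(t)/t \to 0$ (a consequence of \eqref{int-condn-h} together with monotonicity of $h$), the second by absolute continuity of the finite scale integral. The outer part is continuous in $z$ at $z_0$ by dominated convergence, the integrand being uniformly bounded for $z$ close to $z_0$.

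The principal obstacle is the first step: turning the dyadic $10Q^o$ exclusion defining $\cG_s(\nu)$ into a Euclidean ball estimate $\nu(B(z,r)) \le 100\, h(2r)$. The enlargement factor $10$ is precisely what makes the $\ell^\infty$ triangle inequality yield enough room to cover Euclidean balls around $z \notin \cG_s(\nu)$ by boundedly many light dyadic squares of comparable size; once this translation is in place, the scale-function integrability \eqref{int-condn-h} does the rest of the work mechanically.
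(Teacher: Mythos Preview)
Your proof is correct, and it rests on the same core observation as the paper's: for $z\notin\cG_s(\nu)$, every dyadic square of side $\le s$ whose $10$-fold dilate contains $z$ must be light. The execution, however, is genuinely different. The paper invokes an auxiliary lemma to build an explicit Whitney-type decomposition: nested dyadic rectangles $R_k(z)$ shrinking to $z$, with each annulus $R_{k-1}\setminus R_k$ tiled by at most $16$ dyadic squares at moderate distance from $z$; the integral is then estimated by a direct sum over these squares, yielding $\sum_k 16\,h(2^{-k})/2^{-k}\lesssim\int_0^{2^{-k_0}}h(t)/t^2\,dt$. You instead extract the Euclidean growth estimate $\nu(B(z,r))\le Ch(2r)$ in one step (covering the ball by boundedly many light dyadic squares of comparable size) and then run the standard distribution-function/integration-by-parts computation.

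Your route is arguably cleaner and more recognizable as a Frostman-type potential-theory argument; the paper's explicit annular decomposition is more hands-on with the dyadic structure and has the small advantage that the same nested rectangles are reused verbatim in the continuity proof. Your continuity argument via splitting at radius $\epsilon$ and dominated convergence is equally valid; note only that you use the truncated estimate at the variable point $z$ (not just at $z_0$), which is legitimate precisely because continuity is asserted only on $\C\setminus\cG_s(\nu)$, so the approaching points $z$ may be taken in that set.
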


In particular, if $\nu$ is a dissectible measure, then $C(f\nu)$ is continuous
on sets with ``small'' complement.

If $\Om$ is a Jordan domain with piecewise $C^1$ smooth boundary, we
will use the abbreviation
\[
C_{\pt\Om}(w):=
C(w\cdot dz\big|_{\pt\Om}),
\]
(here $dz$ corresponds to the positive orientation of $\pt\Om$).
We adopt the notation $L^p(\pt\Om)=L^p(\pt\Om, |dz|)$.

If $\Om$ is a Jordan domain in the complex plane, by an
\textit{exhaustion} of $\Om$ we mean an increasing sequence of Jordan
domains $\Om_n$ with rectifiable boundaries such that $\cup_n
\Om_n=\Om$. For $1\le p < \infty$,  $E^p(\Om)$ denotes the
Smirnov space of analytic functions in $\Om$. We recall that a
function $f$, holomorphic in $\Om$, belongs
to $E^p(\Om)$ if there
is an exhaustion $\{\Om_n\}$ of $\Om$ such that
\beqn
\label{int-Om-n}
\sup_n \int_{\pt\Om_n}|f(z)|^p\, |dz|
<\infty.
\neqn
These are Banach spaces for any $p$ as above; for $p=2$, they are Hilbert spaces.
The space $E^\infty(\Om)=H^\infty(\Om)$ is just the space of bounded analytic functions on $\Om$.

We recall some basic properties of Smirnov spaces (see \cite{Du, Z}).
Denote by $K(\pt\Om)$
\textit{the David constant}
of the curve $\pt\Om$, that
is, the least constant such that $\cH^1(\pt\Om\cap B(z,r))\le Kr$
for all disks $B(z,r)$ in the plane. If $\pt\Om$ is sufficiently
good (say, locally Lipschitz), then one can use
the same exhaustion $\{\Om_n\}$ in \eqref{int-Om-n} for all
functions $f$. Namely, take any exhaustion $\{\Om_n\}$ of $\Om$
with uniformly bounded David constants $K(\pt\Om_n)$ (such a
sequence of domains always exists). Then a function $f$,
holomorphic in $\Om$, belongs to $E^p(\Om)$ if and only if
\eqref{int-Om-n} holds for this sequence of domains.

Let $\Om^c=\wh\C\sm{\overline\Om}$ be the complementary domain, where $\wh\C$ is the Riemann sphere.
For a function $f$ belonging to $E^2(\Om)$ or to $E^2(\Om^c)$,
its boundary values on $\pt\Om$
are well-defined as an element of
$L^2(\pt\Om)=L^2(\pt\Om, |dz|)$.
In this sense, the spaces $E^2(\Om)$ and $E^2(\Om^c)$ can be identified with closed subspaces
of $L^2(\pt\Om)$. The direct sum decomposition
\beqn
\label{smirn-class-decomp}
L^2(\pt\Om)= E^2(\Om)\dotplus E^2_0(\Om^c)
\neqn
holds; here $E^2_0(\Om^c)=\{f\in E^2(\Om^c):f(\infty)=0\}$.
In general the above direct sum is not orthogonal.
However, the parallel projections onto the
two direct summands in~\eqref{smirn-class-decomp} are given by
the linear bounded transformations
$f\mapsto C_{\pt\Om}f\big|_{\Om}$ and
$f\mapsto -C_{\pt\Om}f\big|_{\Om^c}$.

\begin{definition}
	Given a measure $\mu$ on $\C$ and a
	subset $E\subset \C$, define the corresponding \emph{Carleson constant}
	of $E$ with respect to $\mu$ by
	\[
	\Carl(E,\mu):=\sup_{z\in E, r>0} \frac {\mu(B(z,r))} r,
	\]
	where $B(z,r)$ is the open disc in $\C$ of radius $r$, centered at $z$.
\end{definition}

Suppose $\Om$ is a domain satisfying $\mu(\pt\Om)=0$. 
Consider the \emph{Carleson's embedding operator}: 
\beqn
\label{Carleson-J}
J_\Om:E^2(\Om)\to L^2(\mu), \quad J_\Om f:=f.
\neqn
It is known \cite{Du, Z} that both operators $J_\Om$ and $J_{\Om^c}$ are bounded if and only if
Carleson's constant of the boundary $\pt\Om$ with respect to $\mu$ is bounded.

\begin{definition}
	\label{def-adm-domain}
	A Jordan curve $\ga$ is called {\it admissible} (with respect to $\mu$)
	if it satisfies the following conditions:
	
	\begin{enumerate}
		\item $\ga$ is a piecewise $C^1$ smooth curve;
		
		\item
		Let $\Om$ and $\Om^c$ be the two connected components of $\wh\C\sm\ga$.
		There exists an exhaustion $\{\Om_n\}$ of the domain $\Om$
		such that all curves $\ga$ and $\pt\Om_n$
		are contained in $\C\sm \cG_s(\mu)$ for some (fixed) $s>0$ and all
		David constants $K(\pt\Om_n)$ as well as
		Carleson constants $\Carl(\pt\Om_n, \mu)$ are uniformly bounded.
		The same property holds for $\Om^c$.
	\end{enumerate}
\end{definition}

If $\ga$ is admissible, we simply call $\Om$ and $\Om^c$
\emph{admissible domains}.

\begin{rem}
	It is easy to see that $\mu(\ga)=0$ whenever
	the Jordan curve $\ga$ does not intersect $\cG_s(\mu)$ for some $s>0$.
\end{rem}


\begin{lemma}
	\label{lem-broken-line}
	Let $\mu$ be any compactly suppported finite measure on $\C$. Then there 
	are subsets \;$\A_x, \A_y$\; of the real line
	such that $\cH^1(\R\sm \A_x)=
	\cH^1(\R\sm \A_y) = 0$
	possessing the following properties:
	
	1) Any Jordan broken line $\ga$ is admissible whenever it consists
	of finitely many intervals parallel to the axes and
	its vertices belong to $\A_x +i\A_y$.
	
	2) For any $x_0\in \A_x$, there is an increasing sequence
	$\{x_n^-\}$
	and a decreasing sequence
	$\{x_n^+\}$
	both tending to $x_0$ and such that
	for some $s>0$, all the points $x_0, x_n^-$ and $x_n^+$ belong
	to $\A_x\sm \Re \cG_s(\mu)$. A similar approximation property holds for the set $\A_y$
	(with $\A_y\sm \Im \cG_s(\mu)$ replacing $\A_x\sm \Re \cG_s(\mu)$).
\end{lemma}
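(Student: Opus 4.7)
My plan is to choose $\A_x,\A_y$ as a Lebesgue-density refinement of the complements of the projections of the bad set $\cG_s(\mu)$, and then to couple this choice with a polygonal-shifting procedure to verify admissibility. For each $s>0$ set $B_s:=\Re\,\cG_s(\mu)$ and $F_s:=\R\sm B_s$. Orthogonal projection onto a line is $1$-Lipschitz and therefore does not increase $\cH^1$-content, so $\cH^1(B_s)\le \Honecont(\cG_s(\mu))\to 0$ as $s\to 0^+$ by dissectibility (the dissectibility of $\mu$ is inherited from that of $\nu$ via the remarks preceding condition~(D)). Let $D_s\subset F_s$ be the set of two-sided Lebesgue density points of $F_s$; by the Lebesgue density theorem $\cH^1(F_s\sm D_s)=0$, hence $\cH^1(\R\sm D_s)=\cH^1(B_s)$. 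Define
\[
\A_x:=\bigcup_{n\ge 1}D_{1/n},\qquad \A_y:=\bigcup_{n\ge 1}\tilde D_{1/n},
\]
where $\tilde D_s$ is built analogously from $\Im\,\cG_s(\mu)$. Then $\cH^1(\R\sm\A_x)\le \inf_n\cH^1(B_{1/n})=0$, and likewise for $\A_y$.

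For property~(2), fix $x_0\in \A_x$; by construction $x_0\in D_{s_0}$ for some $s_0>0$, so $x_0\notin B_{s_0}$. Since $D_{s_0}$ differs from $F_{s_0}$ by a $\cH^1$-null set, $x_0$ is also a two-sided density point of $D_{s_0}$, and therefore each half-interval $(x_0-\eps,x_0)$ and $(x_0,x_0+\eps)$ meets $D_{s_0}\subset \A_x\sm \Re\,\cG_{s_0}(\mu)$ in a set of positive $\cH^1$-measure. Extracting monotone sequences from both sides yields $x_n^\pm$ with the common scale $s=s_0$; the argument for $\A_y$ is identical.

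For property~(1), let $\ga$ be a Jordan broken line with axis-parallel sides and vertices in $\A_x+i\A_y$. Each vertical side lies on a line $\{x_i\}\times\R$ with $x_i\in D_{s_i}$, hence disjoint from $\cG_{s_i}(\mu)$; horizontal sides are symmetric. Taking $s_*$ to be the minimum of these finitely many scales gives $\ga\cap\cG_{s_*}(\mu)=\emptyset$. To build an exhaustion $\{\Om_n\}$ of the bounded component $\Om$ of $\C\sm\ga$ I use edge-by-edge inward shifts: for each vertical edge with $x$-coordinate $x_e$ and inward normal in $\Om$ pointing toward $+\hat{x}$ (resp.\ $-\hat{x}$), property~(2) provides a sequence $x_e^{(n)}\in \A_x\sm B_s$ decreasing (resp.\ increasing) to $x_e$, where a common scale $s\le s_*$ serves all finitely many edges; horizontal edges are shifted analogously using $\A_y$. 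Reconnecting the shifted edges at their new corners produces right-angled Jordan broken lines $\ga_n=\pt\Om_n\subset\Om$ that converge Hausdorff-wise to $\ga$ and still lie in $\C\sm\cG_s(\mu)$. Their David constants are bounded by a universal constant because every corner is a right angle. For the $\nu$-Carleson constants, take $z\in\ga_n$ and $0<r\le s$: any dyadic $Q$ with $z\in 10Q^o$ and $|Q|\sim r$ is $\mu$-light, so $\nu(Q)\le \|\sum_j(|u_j|^2+|v_j|^2)\|_\infty\,\mu(Q)\lesssim h(r)$; the hypothesis $\int_0^1 t^{-2}h(t)\,dt<\infty$ forces $h(r)/r$ to be bounded near $0$ (as one sees from $h(r/2)/r\le\int_{r/2}^r h(t)/t^2\,dt$), and for $r>s$ the trivial bound $\nu(B(z,r))/r\le\nu(\C)/s$ suffices. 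The exterior exhaustion is produced by the symmetric outward shift.

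The decisive step is the polygonal exhaustion: the edge-by-edge shifts must be compatible so that each $\ga_n$ is Jordan, lies strictly inside $\Om$, and has all shifted coordinates in a common bad-set-free stratum whose scale $s$ is independent of $n$. Property~(2) supplies exactly this simultaneity — two-sided approximating sequences drawn from a single scale at each vertex — whereas the naive choice $\A_x=\R\sm\bigcap_s B_s$ could admit isolated good points from which no sided approximations could be extracted, and the construction would collapse.
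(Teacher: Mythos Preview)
Your construction of $\A_x,\A_y$ via two-sided Lebesgue density points of the sets $\R\sm\Re\,\cG_s(\mu)$ is exactly the paper's approach, and your verification of property~(2) matches the paper's. The paper's own proof of property~(1) is extremely terse: it observes that the Carleson constant of $\C\sm\cG_s(\mu)$ with respect to $\mu$ is bounded (using $h(t)\le Ct$), notes that any such broken line lies in $\C\sm\cG_s(\mu)$ for some small $s$, and simply declares ``It follows that the assertion~1) holds,'' leaving the exhaustion to the reader. Your explicit edge-by-edge inward/outward shift gives the missing detail and is a genuine contribution.

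One point needs sharpening. Your claim that the David constants $K(\pt\Om_n)$ are ``bounded by a universal constant because every corner is a right angle'' is not a valid justification: rectilinearity alone does not bound the Ahlfors--David constant (imagine many closely spaced parallel segments). The correct reason is that your $\ga_n$ are small perturbations of the \emph{fixed} rectilinear curve $\ga$, with the same number of edges and vertices converging to those of $\ga$; hence $K(\pt\Om_n)\to K(\ga)$ and the sequence is uniformly bounded. With this correction your argument goes through.
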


\begin{examples}
	\label{examples-measures}
	1) Let $\mu$ be the area measure restricted to
	some bounded Borel subset of $\C$. Let $d\nu=u\,d\mu$, where
	$u\in L^p(\mu)$. Let $1/p+1/q=1$. If $p>2$, then by H\"older's inequality,
	any dyadic square is light:
	\[
	\nu(Q)\le \mu(Q)^{1/q}\bigg(\int_Q |u|^p\,d\mu \bigg)^{1/p}\le
	\|u\|_p |Q|^{2/q},
	\]
	and $\aaaa=2/q>1$.
	So each measure $\nu$ of this form is $\aaaa$-dissectible
	(as it was mentioned, the constant $\|u\|_p$ does not matter).

	\medskip
	
	2)
	Similarly, assume that
	$\mu$ satisfies an estimate
	$\mu(Q)\le C|Q|^\si$ for any square $Q$, where
	$\si\in [1,2)$ and $C$ are constants.
	Choose $q\in(1,\sigma)$ and
	the corresponding $p\in(1,\infty)$.
	Then that a measure $d\nu=u\,d\mu$, where the density
	$u$ is in $L^p(\mu)$, is $a$-dissectible for $a=\sigma/q>1$.
	This is obtained in the same way as in the previous example.
	
	%
	%
	
	\medskip
	
	3) On the opposite side, if the closed support $\supp \mu$
	has Minkowski dimension
	less than~$1$, then it is easy to see that $\mu$ is $\aaaa$-dissectible
	for any $\aaaa>1$ (a square can be heavy only if it touches $\supp \mu$).
	\medskip

	4) Of course, the closed support is a very rough
	characteristic of ``the dimension'' of  $\mu$.
	Suppose, however, that measures $\mu_k$, $k\ge 0$, are mutually singular
	and $h_k$-dissectible, for some scale functions $h_k$. Suppose $\sum\mu_k(\C)$ is finite. Put
	$\mu=\sum\mu_k$. This measure can fail to be dissectible, but
	there is an equivalent measure to $\mu$ that is
	$h$-dissectible for some $h$. Namely, the following Proposition holds.

	\begin{prop}
		\label{sum-dissect-measures}
		Suppose that measures $\mu_k$ are $h_k$-dissectible for some
		scale functions $h_k$,
		and suppose that the union of the closed supports
		of these measures is a bounded subset of $\C$.
		Then there are (small) positive constants $c_k$ such that
		the measure $\sum_k c_k \mu_k$ is finite and dissectible with
		respect to some scale function $h$.
	\end{prop}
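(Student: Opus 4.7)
The plan is to construct a scale function $h$ and positive constants $c_k$ so that $\mu:=\sum_k c_k\mu_k$ is $h$-dissectible, by first replacing each $h_k$ with a scale function $\hat h_k$ that forbids heavy squares above a chosen threshold and then combining with well-chosen weights. Concretely, for each $k$ pick $s_k^*>0$ with $\Honecont(\cG_{s_k^*}(\mu_k))\le 2^{-k}$ (heaviness in the sense of $h_k$), set $\hat h_k:=h_k$ on $[0,s_k^*]$, and extend $\hat h_k$ above $s_k^*$ in any monotone, eventually unbounded way with $\hat h_k(t)\ge \mu_k(\C)$ for $t>s_k^*$---say $\hat h_k(t):=\max\{h_k(t),\,\mu_k(\C)+(t-s_k^*)\}$. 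Since $\hat h_k=h_k$ near zero, the integrability $\int_0^1 t^{-2}\hat h_k(t)\,dt<\infty$ is inherited and $\hat h_k$ is a scale function. Because $\hat h_k\ge h_k$ pointwise, $\mu_k$ is also $\hat h_k$-dissectible. Crucially, any $\hat h_k$-heavy square $Q$ for $\mu_k$ satisfies $|Q|\le s_k^*$ (otherwise $\mu_k(Q)\le\mu_k(\C)\le \hat h_k(|Q|)$), so
\begin{equation*}
\cG_s^{\hat h_k}(\mu_k)\subseteq \cG_{s_k^*}^{\hat h_k}(\mu_k)\subseteq \cG_{s_k^*}^{h_k}(\mu_k),\qquad s>0,
\end{equation*}
yielding the uniform bound $\Honecont(\cG_s^{\hat h_k}(\mu_k))\le 2^{-k}$ for every $s>0$.

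Next, choose $c_k>0$ so small that the three sums $\sum_k c_k\mu_k(\C)$, $\sum_k c_k\int_0^1 t^{-2}\hat h_k(t)\,dt$, and $\sum_k c_k\hat h_k(t)$ (for each fixed $t$) are all finite; taking $c_k:=2^{-k}\big/\big(1+\mu_k(\C)+\hat h_k(k)+\int_0^1 t^{-2}\hat h_k\big)$ works thanks to the monotonicity of $\hat h_k$. Setting $\mu:=\sum_k c_k\mu_k$ and $h:=\sum_k c_k\hat h_k$, $\mu$ is a finite compactly supported Borel measure and $h$ is verified to be a scale function from the three summability conditions and the fact that $h(t)\ge c_1\hat h_1(t)\to\infty$.

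Finally, if a dyadic square $Q$ is $h$-heavy for $\mu$, then $\sum_k c_k\mu_k(Q)>\sum_k c_k\hat h_k(|Q|)$ forces $\mu_k(Q)>\hat h_k(|Q|)$ for at least one $k$, making $Q$ an $\hat h_k$-heavy square for $\mu_k$. Hence
\begin{equation*}
\cG_s^h(\mu)\subseteq \bigcup_k \cG_s^{\hat h_k}(\mu_k),\qquad \Honecont(\cG_s^h(\mu))\le \sum_k \Honecont(\cG_s^{\hat h_k}(\mu_k)).
\end{equation*}
The summands are bounded in $s$ by the summable sequence $\{2^{-k}\}$ (first paragraph), and each tends to zero as $s\to 0^+$ since $\mu_k$ is $\hat h_k$-dissectible. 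Dominated convergence yields $\lim_{s\to 0^+}\Honecont(\cG_s^h(\mu))=0$, as required.

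The chief technical obstacle is obtaining the uniform-in-$s$ bound from the first paragraph: without a summable dominating sequence independent of $s$, one cannot interchange $\lim_{s\to 0^+}$ with the sum over $k$, since the pointwise dissectibility of each $\mu_k$ alone is not enough to control the tail. The truncation trick---forcing $\hat h_k$ to exceed $\mu_k(\C)$ beyond the threshold $s_k^*$---bans heavy squares at scales $>s_k^*$ and supplies the missing uniformity.
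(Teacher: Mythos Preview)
Your proof is correct and shares the paper's core insight---namely, arranging that any square contributing to heaviness from the $k$-th summand must have side at most some pre-chosen $s_k^*$, which supplies a summable uniform-in-$s$ bound and lets you interchange the limit with the sum---but you implement it differently. The paper first reduces to the case where all the $h_k$ coincide (by constructing a single scale function dominating each $h_k$ near~$0$), and then, with a common $h$, scales the \emph{measures}: setting $c_k=\alpha_k^2$ with $\sum_k\alpha_k\le 1$, it uses the inequality trick $\gamma\sum_k\alpha_k^2\mu_k(Q)\ge\sum_k\alpha_k h(|Q|)$ to show that $\nu$-heaviness forces $\alpha_k\mu_k$-heaviness for some $k$, which in turn confines $|Q|$ below $s_k$. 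You instead keep the measures intact and modify the \emph{scale functions}, pushing $\hat h_k$ above $\mu_k(\C)$ beyond $s_k^*$ to ban large heavy squares, and then take $h=\sum_k c_k\hat h_k$ so that $h$-heaviness of $\mu$ immediately yields $\hat h_k$-heaviness of some $\mu_k$. Your route is more direct (no two-case split, no algebraic balancing with $\alpha_k^2$), while the paper's approach has the minor aesthetic benefit that in the equal-$h_k$ case the resulting scale function is just $h$ itself.
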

	
	5) In particular, for any atomic measure $\mu=\sum \de_{t_k}$ there is
	an equivalent measure $\nu=\sum c_k\de_{t_k}$, which is dissectible.
	This can be related to the techniques proposed by
	Foia\c{s} and his collaborators \cite{FJKP07}--\cite{FJKP11}.
\end{examples}

The proofs of Lemmas~\ref{L-infty-lemma}, ~\ref{lem-broken-line}
and Proposition~\ref{sum-dissect-measures} will be given in Section~\ref{sect-geom}.

\begin{rem}
	A measure $\nu$ is not $h$-dissectible (for any scale function $h$)
	whenever for some Ahlfors-David curve $\ga$,
	$\nu|\ga$ has a nontrivial part, absolutely continuous with respect
	to arc length.
	This observation can be deduced from
	Lemma~\ref{L-infty-lemma}.
\end{rem}

\smallskip
One can infer from the
above examples that measures of ``dimension'' $1$ are the
most difficult case, at least for this approach.

\section{THE FUNCTIONAL MODEL}
\label{the-model}

We embark now on the higher spectral multiplicity framework. Specifically,
the Spectral Theorem implies that by passing to a unitarily equivalent operator, one
can assume that the normal operator $N$ is represented by
a von Neumann direct integral:
\beqn
\label{vN-repres-N}
Nf(z)=zf(z), \quad f\in \cH=L^2(\mu, \h):=\int^\oplus \h(z)\, d\mu(z),
\neqn
where $\{\h(z)\}$ is a measurable family of Hilbert spaces, referred to as fibers.

If the fiber space is constant, that is, $H(z)\equiv L$ $\mu$-a.e., then $\cH=L^2(\mu)\otimes L$.
This is why we adopt the shorter
notation $L^2(\mu, \h)$ for the direct integral in~\eqref{vN-repres-N}.

Put $\hdot(z)=\spann\{u_j(z), v_j(z): 1\le j\le m\}$; then
$\hdot(z)\subset H(z)$ and
$\dim \hdot(z)\le 2m$ $\mu$-a.e.
We make the following second standing assumption:

\bigskip

\begin{enumerate}
	
	\item[(K)]
	{\bf For $\mu$-almost every $z\in \C$,
		the vectors $v_j(z)$, $j=1,\dots, m$ generate the space~$\hdot(z)$.}
\end{enumerate}

\bigskip

Any finite rank perturbation can be rewritten in a form that
satisfies (K). Indeed, it suffices to add to the
expression~\eqref{perturb-T} finitely many new formal terms $\langle
\cdot, \vv_j\rangle\,\uu_j$ with $\uu_j=0$.

From now on, we build the quotient functional model only for the restriction
\[
T_0:=T|\cH_0
\]
of $T$ to its reducing subspace
\beqn
\label{Hdot}
\cH_0:=L^2(\mu, \hdot).
\neqn
Notice that the restriction of $T$ to $\cH\ominus \cH_0$ is normal and is already represented
in its von Neumann functional model.

Let $x, y\in \cH_0$ be a pair of vectors
(so that $x(z), y(z)$ are measurable cross-sections, $x(z), y(z)\in \hdot(z)$, and
$\|x(\cdot)\|, \|x(\cdot)\|\in L^2(\mu)$).
Their $\odot$ product is defined by:
\[
\big(x\odot y\big)(z):= [ x(z), y(z) ]_{\hdot(z)}
\]
(so that $x\odot y\in L^1(\mu)$).
To simplify notation,
we assume henceforth that
the products $[ x(z), y(z) ]_{\hdot(z)}$ are bilinear.
To be more precise, fix a measurable family of orthonormal bases $e_j(z)$, $1\le j\le n(z)$
in spaces $\hdot(z)$ and define these products in terms of corresponding coordinates:
\[
[ x(z), y(z) ]_{\hdot(z)}= \sum_{j=1}^{n(z)} x_j(z) y_j(z).
\]
The usual sesquilinear product in a fiber $\hdot(z)$ (used in~\eqref{perturb-T})
is given by
\[
\langle x(z), y(z) \rangle_{\hdot(z)}= [ x(z), \bar y(z) ]_{\hdot(z)}.
\]
Here $\bar y(z)=\sum_j \bar y_j(z) e_j(z)$ whenever
$y(z)=\sum_j y_j(z)e_j(z) \in \hdot(z)$.

In this situation, in the definition~\eqref{nu} of
the measure $\nu$, we set $|u_j(z)|:=\|u_j(z)\|_{\h(z)}$, and the
same for $|v_j(z)|$.

If $H_0(z)\equiv L$ $\mu$-a.e., where $\dim L=1$, then $\cH_0$ can be identified with $L^2(\mu)$,
and $x\odot y$ is just the pointwise product of $L^2$ functions $x$ and $y$.

We will use the formal columns $\uu=(\uu_1, \dots, \uu_m)^t$ and
$\vv=(\vv_1, \dots, \vv_m)^t$.
Then the perturbation in~\eqref{perturb-T}
can be expressed as follows:
\beqn
\label{perturb}
\sum_{j=1}^{m} \langle \cdot, \vv_j\rangle\,\uu_j =
\uu^t \langle \cdot, \vv\rangle.
\neqn

In what follows, instead of condition (D) we impose for convenience the following two conditions, that are formally stronger:
\bigskip

\begin{enumerate}
	\item[(D$'$)]
	{\bf The measure $\mu$ is dissectible};
	\bigskip
	
	\item[(B)]
	{\bf The functions $u_j$ and $v_j$ are bounded}.
\end{enumerate}
\bigskip

If (K) holds, there is no loss of generality in assuming both (D$'$) and (B): one can
achieve it by substituting $\mu$ with an equivalent measure. Indeed,
the function $\rho=\sum_j |u_j|^2+|v_j|^2\in L^1(\mu)$
is positive and, by (K), nonzero $\mu$-a.e.
Then the data
\[
\wt\mu := \rho \mu, \;
\wt u_j := \rho^{-1/2} u_j, \;
\wt v_j :=  \rho^{-1/2} v_j
\]
satisfy (D$'$) and (B) and define new
operators $N$ and $T$, which are unitarily equivalent to the original ones.

The construction of our functional model does not require condition (D$'$). However, in order to achieve our spectral decomposition goals the
dissectability of the spectral measure is necessary.

\begin{lemma}
	\label{lem-Cauchy-mu}
	Let $\Om$ be a Jordan domain with admissible boundary with respect to $\mu$.
	Formula
	\[
	f\mapsto C(f\mu)
	\]
	defines bounded operators defined on $L^2(\mu|\Om)$
	and with values on $E^2(\Omc)$ and similarly  from $L^2(\mu|\Omc)$
	to $E^2(\Om)$.
\end{lemma}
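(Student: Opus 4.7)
The plan is to prove both boundedness statements by $L^2$-duality, exploiting the exhaustion supplied by admissibility to secure $n$-independent constants. I focus on the map $L^2(\mu|\Om^c) \to E^2(\Om)$, since the opposite case is entirely symmetric---one exhausts $\Om^c$ rather than $\Om$.

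Fix $f \in L^2(\mu|\Om^c)$ and let $\{\Om_n\}$ be an exhaustion of $\Om$ with uniformly bounded David constants $K(\pt\Om_n) \le K_0$ and Carleson constants $\Carl(\pt\Om_n, \mu) \le M_0$, as guaranteed by admissibility. Since $\supp(f\mu)$ is disjoint from $\bar\Om_n$, the Cauchy integral $C(f\mu)$ is holomorphic on a neighborhood of $\bar\Om_n$, so its trace on $\pt\Om_n$ automatically lies in $L^2(\pt\Om_n, |dz|)$. To conclude $C(f\mu) \in E^2(\Om)$ via \eqref{int-Om-n}, it suffices to produce a uniform estimate
\[
\|C(f\mu)\|_{L^2(\pt\Om_n, |dz|)} \le C \|f\|_{L^2(\mu|\Om^c)}.
\]

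By $L^2$-duality, this amounts to bounding $\bigl|\int_{\pt\Om_n} C(f\mu)(z)\, h(z)\, dz\bigr|$ uniformly over $h \in L^2(\pt\Om_n)$ of norm one. Applying Fubini (justified since $\dist(\pt\Om_n, \Om^c) > 0$) gives
\[
\int_{\pt\Om_n} C(f\mu)(z) h(z)\, dz = \int_{\Om^c} f(t) \left(\int_{\pt\Om_n} \frac{h(z)}{t-z}\, dz\right) d\mu(t).
\]
For each $t \in \Om^c \subset \Om_n^c$ the inner integral equals $2\pi i\, h_-(t)$, where $h_-$ denotes the $E^2_0(\Om_n^c)$-component of $h$ in the decomposition $L^2(\pt\Om_n) = E^2(\Om_n) \dotplus E^2_0(\Om_n^c)$ recalled in \eqref{smirn-class-decomp}. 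Indeed, $h_+ \in E^2(\Om_n)$ contributes zero by analyticity of $1/(t-z)$ on $\Om_n$ for $t \in \Om_n^c$, while $h_-$ contributes $2\pi i\, h_-(t)$ after accounting for the reversed orientation of $\pt\Om_n^c$ relative to $\pt\Om_n$.

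Substituting and using Cauchy--Schwarz,
\[
\left|\int_{\pt\Om_n} C(f\mu)(z) h(z)\, dz\right| \le 2\pi \|f\|_{L^2(\mu|\Om^c)} \|h_-\|_{L^2(\mu|\Om_n^c)}.
\]
Now the parallel projection $h \mapsto h_-$ has norm controlled by $K(\pt\Om_n) \le K_0$, so that $\|h_-\|_{L^2(\pt\Om_n)} \lesssim \|h\|_{L^2(\pt\Om_n)}$ uniformly, and the Carleson embedding $J_{\Om_n^c}: E^2(\Om_n^c) \to L^2(\mu|\Om_n^c)$ has norm controlled by $\Carl(\pt\Om_n, \mu) \le M_0$; together these yield $\|h_-\|_{L^2(\mu|\Om_n^c)} \le C_1 \|h\|_{L^2(\pt\Om_n)}$ with $C_1$ independent of $n$. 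This gives the required uniform bound. The main obstacle is precisely the $n$-uniformity of constants throughout, which is the whole point of the admissibility hypothesis bundling together uniform David and Carleson bounds along the exhaustion; once those are in place, everything reduces to standard Smirnov-space theory, Cauchy's formula, and the Carleson embedding.
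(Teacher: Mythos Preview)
Your argument is correct. It differs from the paper's in one notable way: the paper never touches the exhaustion $\{\Om_n\}$ in this lemma. Instead it works directly on $\pt\Om$, first restricting to $f$ supported away from $\pt\Om$ (dense since $\mu(\pt\Om)=0$), and then, for $g\in E^2(\Om)$ analytic on $\bar\Om$, uses the identity
\[
\frac{1}{2\pi i}\int_{\pt\Om} C(f\mu)\cdot g\,dz = \int_\Om fg\,d\mu
\]
together with the Cauchy duality $E^2_0(\Om^c)^* \simeq E^2(\Om)$ to recognize $f\mapsto C(f\mu)$ as the transpose of the Carleson embedding $J_\Om$. So the only quantitative input the paper uses is the finiteness of $\Carl(\pt\Om,\mu)$.

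Your route instead verifies $E^2(\Om)$-membership via the defining condition \eqref{int-Om-n}, and for that you need the uniform David and Carleson bounds along the exhaustion---more of the admissibility hypothesis than the lemma actually requires. Both arguments are ultimately the same duality (your decomposition $h=h_++h_-$ and discarding $h_+$ is exactly the passage to the Cauchy pairing), but the paper's version is shorter and isolates precisely which part of admissibility is needed here. Your approach, on the other hand, has the virtue of being self-contained at the level of $L^2$ estimates on curves, without invoking that $E^2_0(\Om^c)$ is a dual space.
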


\begin{proof}
	Let $f\in L^2(\mu)$ and assume that the closed support of $f$
	does not touch $\pt\Om$.
	For any function $g\in E^2(\Om)$, analytic on a neighbourhood of
	the closure of $\Om$, one has
	\[
	\frac 1{2\pi i} \int_{\pt\Om} C(f\mu)\cdot g\, dz =
	\int_\Om fg\, d\mu.
	\]
	Since $\Om$ is a Smirnov domain~\cite{Du}, functions $g$ as above form a dense subspace of $E^2(\Om)$.
	
	Notice also that $E^2(\Om)$ is dual to
	$\Etwo_0(\Omc)$
	with respect to Cauchy duality (defined by
	the left hand side of the above identity). Accordingly,
	the boundedness of Carleson's  embedding operator ~\eqref{Carleson-J}
	implies the boundedness of the linear transform $f\mapsto C(f\mu)$, defined on
	$L^2(\chi_{\Om}\cdot\mu)$ and with values in $\Etwo_0(\Omc)$. The boundedness of the second operator is
	checked similarly.
\end{proof}

\begin{definition}
	\label{def-model-space}
	To a domain $\Om$ in $\C$ with admissible boundary we associate
	the following \emph{model space}
	\beqn
	\label{Mod-Om}
	\Mod(\Om) = \big\{g=C (x\odot \bar v\cdot\mu) + b:
	\quad  x\in L^2(\mu|\Om, H_0),
	\; b \in \Etwo(\Om)\otimes \C^m \big\},
	\neqn
	where $x\odot \bar \vv = (x\odot \bar \vv_1, \dots, x\odot \bar \vv_m)^t$
	is a column function. According to (D$'$), $x\odot \bar \vv\in L^2(\mu)\otimes \C^m$.
\end{definition}

This space is contained in $L^1(\Om, dA)\otimes \C^m$, where $dA$ is the area
measure. It follows from (K) that the function $x$ is determined
by $g$ uniquely from $x\odot \bar v\cdot\mu = -\pi \dbar g$ on $\Om$ (in the sense
of distributions). Hence $b$ is also determined by $g$.

\begin{prop}
	\label{prop-traces}
	Let $\Om$ be an admissible domain
	and $\ga$ an admissible Jordan curve,
	which is either contained in $\Om$ or is a subarc of $\pt\Om$.
	
	Then any function $g$ in $\Mod(\Om)$ has a well-defined ``trace''
	$g|\ga$ on $\ga$, which is an element of
	$L^2(\ga)\otimes\C^m$. The map $g\mapsto g|\ga\in L^2(\ga)\otimes\C^m$ is bounded.
\end{prop}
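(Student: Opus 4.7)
My plan is to use the unique decomposition $g = F + b$ in $\Mod(\Om)$, with $F = C((x \odot \bar \vv)\mu|_\Om)$ and $b \in \Etwo(\Om) \otimes \C^m$, and produce the trace as $F|\ga + b|\ga$, estimating each piece separately via Lemma~\ref{lem-Cauchy-mu}. Assumption (B) ensures $x \odot \bar \vv \in L^2(\mu) \otimes \C^m$ whenever $x \in L^2(\mu, H_0)$, and the $m$ components of the column can be handled independently, so I work essentially scalar-wise.

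For the Smirnov summand $b$: when $\ga$ is a subarc of $\pt\Om$, the $L^2(\ga)$ trace is part of the definition of $\Etwo(\Om)$, with $\|b|\ga\|_{L^2(\ga)} \le \|b\|_{\Etwo(\Om)}$. When $\ga$ lies strictly inside $\Om$, let $D_1$ denote the bounded component of $\wh\C \sm \ga$, so that $\bar D_1 \subset \Om$ and $\dist(\ga, \pt\Om) > 0$; then $b$ is analytic across $\ga$, and the standard pointwise bound for $\Etwo(\Om)$ functions on compact subsets gives $|b(z)|^2 \le C\|b\|^2_{\Etwo(\Om)}$ for $z \in \ga$, which integrated over the finite-length $\ga$ yields $\|b|\ga\|_{L^2(\ga)} \le C'\|b\|_{\Etwo(\Om)}$.

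For the Cauchy summand $F$, split $\mu|_\Om = \mu_1 + \mu_2$ with $\mu_i = \mu|_{\Om \cap D_i}$ (where $D_1, D_2$ are the two components of $\wh\C \sm \ga$; in the subarc case one of the pieces is empty). Writing $F = F_1 + F_2$ with $F_i = C((x\odot\bar\vv)\mu_i)$, Lemma~\ref{lem-Cauchy-mu} applied to each admissible domain gives $F_1 \in \Etwo(D_2)$ and $F_2 \in \Etwo(D_1)$, with norms bounded by $C\|\vv\|_\infty\|x\|_{L^2(\mu, H_0)}$. Each $F_i$ thus carries an $L^2(\ga)$ boundary trace from the respective side, and I set $F|\ga := F_1|\ga + F_2|\ga \in L^2(\ga)$. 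To verify that this is the genuine nontangential $L^2$ boundary value of $F$ from either side of $\ga$ — so that the trace is intrinsic and not an artifact of the splitting — invoke the Plemelj--Sokhotski jump formula: the jump of $F_1$ across $\ga$ equals $2\pi i$ times the arclength density on $\ga$ of $(x\odot\bar\vv)\mu_1$, and similarly for $F_2$. Admissibility $\ga \cap \cG_s(\mu) = \emptyset$ combined with the Remark following Definition~\ref{def-adm-domain} gives $\mu(\ga) = 0$, so this density and hence the jump vanish. Summing yields $g|\ga \in L^2(\ga) \otimes \C^m$ with $\|g|\ga\|_{L^2(\ga)} \le C\bigl(\|x\|_{L^2(\mu, H_0)} + \|b\|_{\Etwo(\Om)}\bigr)$.

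The main obstacle is upgrading the pointwise no-jump assertion of Plemelj--Sokhotski to an $L^2(\ga)$ statement: in particular, verifying that $F_1$, which is not holomorphic on $D_1$, nonetheless admits an $L^2(\ga)$ boundary limit from the $D_1$ side coinciding with its $\Etwo(D_2)$ boundary values. This requires exploiting the admissible exhaustions of $D_1$ with uniformly bounded David and Carleson constants furnished by Definition~\ref{def-adm-domain}, together with the $L^2(\ga)$-boundedness of the principal-value Cauchy integral on the Ahlfors--David regular curve $\ga$.
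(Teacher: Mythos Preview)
Your decomposition and use of Lemma~\ref{lem-Cauchy-mu} to obtain the $L^2(\ga)$ bound are correct and coincide with the mechanism the paper relies on. The difference lies in how the trace is actually \emph{defined}. You work directly with general $x\in L^2(\mu,H_0)$, set $F|\ga:=F_1|\ga+F_2|\ga$, and then confront the ``main obstacle'' of showing this is the genuine two-sided boundary value. Your proposed tool, the Plemelj--Sokhotski jump formula, is not quite the right one: that formula governs the jump of the Cauchy transform of a measure \emph{supported on} $\ga$, whereas your $\mu_1$ lives in $D_1$, and for unbounded $x$ it is not a priori clear that $F_1$ even possesses an $L^2$ nontangential limit from the $D_1$ side.

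The paper sidesteps this entirely by a density argument. For \emph{bounded} $x$, Lemma~\ref{L-infty-lemma} says $C(x\odot\bar v\cdot\mu)$ is continuous on $\C\sm\cG_s(\mu)\supset\ga$, so the trace is simply the pointwise restriction---no splitting, no jump analysis. On this dense set of $x$'s, Lemma~\ref{lem-Cauchy-mu} (applied to the two sides of $\ga$, exactly as you do) shows the map $x\mapsto (C(x\odot\bar v\cdot\mu))|\ga$ is bounded into $L^2(\ga)$. One then extends by continuity to all of $L^2(\mu,H_0)$ and \emph{declares} this extension to be the trace. This gives a well-defined bounded linear map without ever needing to discuss nontangential limits for general $x$; the ``intrinsic'' nature of the trace is automatic because it agrees with pointwise restriction on a dense subspace. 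So your route is salvageable, but the clean fix for your obstacle is precisely the paper's density step via Lemma~\ref{L-infty-lemma}, not Plemelj--Sokhotski.
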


The exact definition of these traces will be given in Section~\ref{sect-proofs-model}.

Formula
\[
\|g\|^2:=\|x\|^2_{L^2(\mu|\Om, H_0)}+\|b\|^2_{\Etwo}
\]
defines a Hilbert space structure on $\Mod(\Om)$ (we leave the
details to the reader). Proposition~\ref{prop-traces} implies that the maps
\beqn
\label{bded-maps-mod-space}
g\mapsto g|\partial\Om \in L^2(\pt\Om)\otimes \C^m, \quad g\mapsto x=x(g)\in L^2(\mu, H_0)
\neqn
are bounded on $\Mod(\Om)$. Hence formula
\beqn
\label{equiv-norm-mod-space}
\|g\|_1^2:=\|x(g)\|^2_{L^2(\mu|\Om, H_0)}+\|g\|^2_{L^2(\pt\Om)}
\neqn
defines an equivalent Hilbert space norm on $\Mod(\Om)$.

The $m\times m$ matrix function
$\Psi$, defined by
\[
\Psi=I_m+C(\bar v \odot u^t\cdot \mu)
\]
is called \textit{the perturbation matrix} and will play
a major role in the sequel. It belongs to $L^2_{loc}(\C, dA)$.
By Lemma~\ref{L-infty-lemma}, this function is defined and continuous  on $\wh \C\sm \cG_s(\mu)$,
for any $s>0$.

Notice that $\Psi$ might be not continuous on the complement of the union of the sets $\cG_s(\mu)$.

Put
\[
\psi=\det\Psi;
\]
this scalar valued function is called \textit{the perturbation determinant}.
We observe that $\Psi$ and $\psi$ are holomorphic on $\wh\C\sm \supp\mu$,
$\Psi(\infty)=I_m$ and $\psi(\infty)=1$.

For a domain $\Om$ as in the above Definition, set
\beqn
\label{def-W-0-Om}
W_{0,\Om}x(z)= C\big(x\odot \bar v\cdot \mu\big)(z), \quad z\in\Om,
\neqn
so that
\[
W_{0,\Om}: L^2(\mu|\Om, H_0)\to \Mod(\Om).
\]

A direct calculation yields the intertwining property
\beqn
\label{intertw-W0-new}
W_{0,\Om} T x(z) = z  W_{0,\Om} x(z) + \Psi(z)\,\langle x, v\rangle.
\neqn

We denote by
$\Der\supp\mu$ the derivative set of the support of $\mu$, that is,
the set of all accumulation points of $\supp\mu$.

\begin{prop}
	\label{prop-spectrum-T}
	
	Set $T_0=T|\cH_0$.
	\begin{enumerate}
		
		\item
		The essential spectrum of $T_0$ coincides with the set $\Der\supp \mu$.
		
		\item
		Suppose  $\la\in \C\sm\supp \mu$. Then $\la\in\si(T_0)$ iff $\psi(\la)=0$.
		The same criterion guarantees $\la$ to belong to the point spectrum of $T_0$.
		
	\end{enumerate}
\end{prop}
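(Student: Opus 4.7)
The plan for part \textbf{(1)} is to invoke Weyl's theorem on the invariance of the essential spectrum under compact perturbations. Since $K = T_0 - N_0$ has rank at most $m$ it is compact, so $\sigma_{\mathrm{ess}}(T_0) = \sigma_{\mathrm{ess}}(N_0)$. The operator $N_0$ is the restriction of the normal operator $N$ to the reducing subspace $\cH_0$, hence normal, with spectrum $\supp\mu$. Condition (K) forces $\dim\hdot(z)\le 2m$ for $\mu$-a.e.\ $z$, which rules out eigenvalues of infinite multiplicity; for a normal operator this means $\sigma_{\mathrm{ess}}(N_0)$ reduces to the set of non-isolated points of the spectrum, i.e., $\Der\supp\mu$.

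For part \textbf{(2)}, fix $\lambda\notin\supp\mu$. Then $N_0-\lambda$ is invertible, its inverse acting as multiplication by $(z-\lambda)^{-1}$, and one factors
\[
T_0-\lambda \;=\; (N_0-\lambda)\bigl(I + (N_0-\lambda)^{-1}K\bigr),
\]
so $T_0-\lambda$ is invertible iff the second factor is. Writing $K = UV^*$ with $U,V\colon\C^m\to\cH_0$ sending the $j$-th standard basis vector to $u_j$ and $v_j$ respectively, the Weinstein--Aronszajn (Sylvester) determinant identity gives
\[
\det\bigl(I + (N_0-\lambda)^{-1}K\bigr) \;=\; \det\bigl(I_m + V^*(N_0-\lambda)^{-1}U\bigr).
\]
A direct calculation, using the identity $\langle x,y\rangle = [x,\bar y]$ that relates the sesquilinear product on $\hdot(z)$ with the bilinear $\odot$-product, shows that the $(i,j)$-entry of the $m\times m$ matrix $V^*(N_0-\lambda)^{-1}U$ equals
\[
\int \frac{(u_j\odot\bar v_i)(z)}{z-\lambda}\,d\mu(z) \;=\; C\bigl((u_j\odot\bar v_i)\,\mu\bigr)(\lambda),
\]
which, compared with the definition of $\Psi$, is exactly $(\Psi(\lambda)-I_m)_{ij}$. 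Consequently the displayed determinant equals $\psi(\lambda)$, so $\lambda\in\si(T_0)$ iff $\psi(\lambda) = 0$.

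For the point-spectrum statement, note that $T_0-\lambda$ is a compact (in fact finite-rank) perturbation of the invertible operator $N_0-\lambda$, hence Fredholm of index zero; non-invertibility is therefore equivalent to non-injectivity, and thus to $\lambda$ being an eigenvalue. The only genuine bookkeeping obstacle is verifying that the matrix $V^*(N_0-\lambda)^{-1}U$ matches $\Psi(\lambda)-I_m$ entry-by-entry, with the correct placement of indices and complex conjugations coming from the interplay of the sesquilinear and bilinear products; once this alignment is established the rest is standard finite-rank perturbation theory.
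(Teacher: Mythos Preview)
Your argument is correct. Part (1) matches the paper's reasoning exactly (Weyl plus the observation that $\dim H_0(z)\le 2m$ forces $\sigma_{\mathrm{ess}}(N_0)=\Der\supp\mu$), only spelled out in more detail than the paper's one-line appeal to ``the definition of $\cH_0$.''

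For part (2) the paper takes a slightly different route: rather than factoring $T_0-\lambda=(N_0-\lambda)\bigl(I+(N_0-\lambda)^{-1}K\bigr)$ and invoking the Weinstein--Aronszajn/Sylvester identity, it first uses part (1) to reduce to the eigenvalue equation, writes this out as $(z-\lambda)f(z)=-u^t c$ with $c=\langle f,v\rangle$, solves for $f$, and substitutes back to obtain the finite linear system $\Psi(\lambda)c=0$. Your determinant-based approach and the paper's bare-hands solution of the eigenvector equation are two standard packagings of the same finite-rank perturbation calculation; yours has the small advantage that the spectrum and point-spectrum statements come out simultaneously (via the index-zero Fredholm observation), whereas the paper treats the eigenvalue characterisation first and then remarks that $\lambda\in\sigma(T_0)$ forces $\lambda$ to be an eigenvalue. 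The bookkeeping you flag --- matching $V^*(N_0-\lambda)^{-1}U$ with $\Psi(\lambda)-I_m$ via the symmetry of the bilinear $\odot$-product --- is exactly right.
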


\begin{prop}
	\label{prop-closed-ss}
	For any admissible domain $\Om$ such that $\psi\ne 0$ on $\pt\Om$,
	the linear manifold
	\[
	\Psi \cdot\big(\Etwo(\Om)\otimes \C^m\big)
	\]
	is a closed subspace of $\Mod(\Om)$.
	Moreover, for any $a\in \Etwo(\Om)\otimes \C^m$, the trace of $\Psi a$ on $\pt\Om$, in the sense of
	Proposition~\ref{prop-traces}, equals $\Psi\cdot a|\pt\Om$ (notice that
	$\Psi$ is continuous on $\pt\Om$).
\end{prop}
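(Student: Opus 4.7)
The plan is to establish the three assertions in the stated order, using the explicit structure of $\Mod(\Om)$ together with the equivalent norm~\eqref{equiv-norm-mod-space}. For boundedness of $M_z$, apply the identity $\frac{z}{t-z} = -1 + \frac{t}{t-z}$ to $g = C(x\odot\bar v\,\mu) + b$, obtaining
\[
zg(z) = C\big((zx)\odot\bar v\,\mu\big)(z) + \Big(zb(z) - \int_\Om x\odot\bar v\,d\mu\Big),
\]
which is an element of $\Mod(\Om)$ with new data $x' = zx$ and $b' = zb - (\text{constant column})$. Both new factors are controlled in norm by $\|g\|$ via boundedness of $z$ on $\Om$ (for an unbounded $\Om$ one works with the natural subspace of functions vanishing at infinity, where the Cauchy-transform summand absorbs the extra factor of $z$).

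For the remaining two assertions, first realize $\Psi a$ explicitly in $\Mod(\Om)$. Given $a \in E^2(\Om)\otimes\C^m$, set $x_a := (u^t a)|_\Om$; this lies in $L^2(\mu|_\Om, H_0)$ by condition (B) and the Carleson embedding $E^2(\Om) \hookrightarrow L^2(\mu|_\Om)$. Put $b_a := \Psi a - C(x_a\odot\bar v\,\mu)$ on $\Om$. The identity $\bar\partial C(f\mu) = -\pi f\mu$, the analyticity of $a$ on $\Om$, and the componentwise bilinear identity $(\bar v\odot u^t)a = (u^t a)\odot\bar v$ combine to give $\bar\partial b_a \equiv 0$ on $\Om$. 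To show $b_a \in E^2(\Om)\otimes\C^m$, split the Cauchy integral in $\Psi - I$ over $\Om$ and $\Om^c$:
\[
b_{a,i}(z) = a_i(z) + \sum_j a_j(z)\!\!\int_{\Om^c}\!\frac{(u_j\odot\bar v_i)(t)}{t-z}d\mu(t) - \sum_j\!\!\int_\Om (u_j\odot\bar v_i)(t)\,\frac{a_j(z)-a_j(t)}{z-t}\,d\mu(t).
\]
The first summand is in $E^2(\Om)$. The second is the product of $a_j \in E^2(\Om)$ with an $\Om^c$-Cauchy transform that is bounded on $\bar\Om$ via Lemma~\ref{L-infty-lemma} (since $\partial\Om \subset \C\setminus\cG_s(\mu)$ by admissibility), hence again in $E^2(\Om)$. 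For the third, substitute $\frac{a_j(z)-a_j(t)}{z-t} = \frac{1}{2\pi i}\int_{\partial\Om}\frac{a_j(\zeta)}{(\zeta-z)(\zeta-t)}d\zeta$, apply Fubini, and invoke the bounded Carleson embedding together with Lemma~\ref{lem-Cauchy-mu}.

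The trace formula is now immediate: $b_a|_{\partial\Om}$ lies in $L^2(\partial\Om)\otimes\C^m$ since $b_a \in E^2(\Om)\otimes\C^m$, and $C(x_a\odot\bar v\,\mu)$ has continuous boundary values on $\partial\Om$ from inside $\Om$ by Lemma~\ref{L-infty-lemma}; summing and identifying the result with the pointwise product $\Psi(\zeta)a(\zeta)$ for a.e.\ $\zeta \in \partial\Om$ yields the claimed identity. For closedness, use the equivalent norm $\|g\|_1^2 = \|x(g)\|^2 + \|g|_{\partial\Om}\|^2$: the trace formula gives $g|_{\partial\Om} = \Psi|_{\partial\Om}\cdot a|_{\partial\Om}$ when $g = \Psi a$, and since $\psi = \det\Psi$ is continuous and nonvanishing on the compact set $\partial\Om$, the matrix $\Psi|_{\partial\Om}$ is uniformly invertible, so
\[
\|\Psi a\|_1 \;\ge\; \|\Psi|_{\partial\Om}\cdot a|_{\partial\Om}\|_{L^2(\partial\Om)\otimes\C^m} \;\ge\; c\,\|a|_{\partial\Om}\|_{L^2(\partial\Om)\otimes\C^m} \;\simeq\; \|a\|_{E^2(\Om)\otimes\C^m}.
\]
Combined with boundedness of $a\mapsto\Psi a$ (which follows from the second step and the Carleson embedding), this yields closedness of the image.

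The main obstacle is verifying $b_a \in E^2(\Om)\otimes\C^m$ in the second step: the divided-difference term couples the analytic function $a$ on $\partial\Om$ with a Cauchy-type integral against $\mu|_\Om$, and obtaining a clean $E^2$-estimate requires both the admissibility of $\partial\Om$ (to ensure $\Om^c$-Cauchy transforms are uniformly bounded on $\bar\Om$) and the bounded Carleson embedding. The remaining steps are either elementary manipulations or direct consequences of the auxiliary lemmas.
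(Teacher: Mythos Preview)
Your overall architecture is correct and the closedness argument via the equivalent norm~\eqref{equiv-norm-mod-space} matches the paper exactly. There is, however, one genuine gap and one point where your route diverges from the paper's in a way worth noting.

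\textbf{The gap: the trace formula.} You claim that $C(x_a\odot\bar v\,\mu)$ has continuous boundary values on $\pt\Om$ ``by Lemma~\ref{L-infty-lemma}''. But Lemma~\ref{L-infty-lemma} applies only to Cauchy transforms of $L^\infty(\mu)$ data, whereas $x_a=(u^t a)|_\Om$ is only in $L^2(\mu|_\Om,H_0)$: the Carleson embedding gives $a\in L^2(\mu|_\Om)$, not $L^\infty$. So you cannot identify the trace with the pointwise product directly. The paper repairs this by first treating $a\in C_a(\bar\Om)\otimes\C^m$ (analytic and continuous up to the boundary), where $x_a$ \emph{is} bounded and Lemma~\ref{L-infty-lemma} does apply pointwise on $\pt\Om$; the general case then follows by density of such $a$ in $E^2(\Om)\otimes\C^m$, using the already established boundedness of $a\mapsto\Psi a$ into $\Mod(\Om)$ together with the continuity of the trace map from Proposition~\ref{prop-traces}.

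\textbf{The different route: $b_a\in E^2(\Om)$.} Your divided-difference decomposition is a legitimate alternative, but it is more laborious than the paper's argument and has a loose end: you assert that the $\Om^c$-Cauchy transform is ``bounded on $\bar\Om$ via Lemma~\ref{L-infty-lemma}'', yet Lemma~\ref{L-infty-lemma} only gives boundedness on $\C\sm\cG_s(\mu)$, which need not contain $\Om$. One can rescue this using analyticity on $\Om$ together with the exhaustion $\{\Om_n\}$ from Definition~\ref{def-adm-domain} (whose boundaries lie in $\C\sm\cG_s(\mu)$) and the maximum principle. The paper bypasses all of this: instead of decomposing $b_a$, it bounds $\sup_n\|b_a\|_{L^2(\pt\Om_n)}$ directly by writing $b_a=\Psi a - C(\bar v\odot x\cdot\mu)$ and using that $\Psi$ is uniformly bounded on $\cup_n\pt\Om_n$ (Lemma~\ref{L-infty-lemma}) and that the Carleson constants $\Carl(\pt\Om_n,\mu)$ are uniformly bounded (part of admissibility). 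This is shorter and uses the admissibility hypothesis in the form it was designed for.
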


\begin{lemma}
	\label{lemma-multiplication-oper}
	Suppose $\Om$ is a domain with admissible boundary. Then the operator
	\beqn
	\label{f-mapsto-zf}
	f(z)\mapsto zf(z)
	\neqn
	is bounded on $\Mod(\Om)$. Its spectrum is contained in
	$\overline\Om$.
\end{lemma}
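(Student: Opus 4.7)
The plan is to prove both claims by elementary partial-fraction manipulation of the Cauchy kernel, acting on the canonical decomposition $g = C(x\odot \bar v\cdot \mu) + b$ of an arbitrary element of $\Mod(\Om)$. This decomposition is unique by the recovery identity $x\odot v\cdot\mu=-\pi\dbar g$ (using (K)) that was recorded after Definition~\ref{def-model-space}, so the maps I build on data are well-defined on the whole model space.

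First, to bound the multiplication operator $f\mapsto zf$, I would exploit the identity $\frac{z}{w-z}=-1+\frac{w}{w-z}$ to rewrite
$$z\cdot C(x\odot \bar v\cdot \mu)(z) = -\int x\odot \bar v\,d\mu + C\bigl((zx)\odot \bar v\cdot \mu\bigr)(z).$$
Since $\Om$ is bounded, $zx\in L^2(\mu|\Om,H_0)$ and $zb\in \Etwo(\Om)\otimes\C^m$. The additional term is a constant vector, which sits in $\Etwo(\Om)\otimes \C^m$ because $|\pt\Om|<\infty$ by admissibility. So $zg$ is displayed as an element of $\Mod(\Om)$ with new data $\bigl(zx,\ zb-\int x\odot \bar v\,d\mu\bigr)$, and each component is norm-controlled in terms of $g$ through $\sup_{\bar\Om}|z|<\infty$ together with $\|v\|_\infty<\infty$ from (B).

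Next, for $\la\notin\bar\Om$ with $d:=\dist(\la,\bar\Om)>0$, I would produce a bounded inverse via the candidate resolvent $R_\la g(z):=g(z)/(z-\la)$. The key tool is the partial fraction
$$\frac{1}{(w-z)(z-\la)}=\frac{1}{w-\la}\Bigl(\frac{1}{w-z}+\frac{1}{z-\la}\Bigr),$$
which decomposes $R_\la$ applied to the Cauchy part of $g$ as $C\!\bigl(\frac{x}{w-\la}\odot \bar v\cdot \mu\bigr)+\frac{c_0}{z-\la}$ with constant vector $c_0:=\int \frac{x\odot \bar v}{w-\la}\,d\mu$. The factor $(w-\la)^{-1}$ is bounded on $\supp(\mu|\Om)$ by $d^{-1}$, so the new $x$-data lies in $L^2(\mu|\Om,H_0)$, and since $(z-\la)^{-1}\in H^\infty(\Om)$, both $b/(z-\la)$ and $c_0/(z-\la)$ stay in $\Etwo(\Om)\otimes \C^m$. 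This exhibits $R_\la g\in\Mod(\Om)$ with a bound depending on $d^{-1}$ and $\|v\|_\infty$; the identities $(M_z-\la)R_\la=R_\la(M_z-\la)=I$ are then immediate pointwise verifications on $\Om$.

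The only point requiring a touch of care is verifying that the ``new analytic parts'' I construct really lie in $\Etwo(\Om)$ and not merely in $\cO(\Om)$, but this is immediate from multiplying existing $\Etwo$-elements by the $H^\infty(\Om)$-functions $z$ and $(z-\la)^{-1}$ together with the inclusion of constants into $\Etwo(\Om)$. Beyond that I do not anticipate any serious analytic obstruction: the lemma is ultimately a soft consequence of partial fraction identities plus the admissibility conditions already encoded in $\Mod(\Om)$.
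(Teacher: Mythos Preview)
Your proof is correct. The explicit partial-fraction identities do exactly what is needed: they exhibit the new data pair for $zg$ and for $(z-\la)^{-1}g$, showing membership in $\Mod(\Om)$ together with the required norm bounds.

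The paper's proof takes a slightly different, more conceptual route. Rather than computing the new $b$-component explicitly, it observes that the map $g\mapsto x(g)$ is recovered from $\bar\pt g$, whence $x(ag)=a\,x(g)$ for any $a$ analytic in a neighborhood of $\bar\Om$. Boundedness then follows immediately from the \emph{equivalent} norm
\[
\|g\|_1^2=\|x(g)\|^2_{L^2(\mu|\Om,H_0)}+\|g|\pt\Om\|^2_{L^2(\pt\Om)}
\]
established via Proposition~\ref{prop-traces}, since both $\|a\,x(g)\|$ and $\|a\cdot g|\pt\Om\|$ are controlled by $\|a\|_{\infty,\bar\Om}$. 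The resolvent for $\la\notin\bar\Om$ is handled by the same remark applied to $a(z)=(z-\la)^{-1}$. Your argument has the advantage of being entirely self-contained with respect to the defining norm $\|x\|^2+\|b\|^2_{E^2}$ and of making the new $E^2$-component visible; the paper's argument is shorter but leans on the trace machinery and leaves the verification that $ag\in\Mod(\Om)$ (as opposed to merely bounding its putative norm) somewhat implicit.
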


For any domain $\Om$ as in the above Proposition~\ref{prop-closed-ss},
we consider the quotient space
\[
\cQ(\Om):= \Mod(\Om)\big/\Psi \cdot(\Etwo(\Om)\otimes \C^m)
\]
and call it
\textit{the quotient model space corresponding to $\Om$}.
The quotient multiplication operator ~$M_\Om$,
induced by the above mapping~\eqref{f-mapsto-zf},
is correctly defined  and bounded on $\cQ(\Om)$.
We state the following analogue of Theorem 1 in \cite{Y1}.

\begin{theorem}
	\label{thm-model}
	Assume conditions (B) and (K) hold true. Let $\cB$ be a domain with
	admissible boundary, such that
	$\si(T)\subset\cB$. Then the transform
	\[
	W_\cB x:=
	C( x \odot\bar v\cdot \mu), \qquad W_\cB: L^2(\mu, \h_0)\to \cQ(\cB),
	\]
	is an isomorphism. It intertwines the operator $T_0$ with the quotient multiplication operator ~$M_\cB$.
\end{theorem}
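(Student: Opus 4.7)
My plan is to prove the theorem in four steps---boundedness, intertwining, injectivity, surjectivity---and conclude via the open mapping theorem. Boundedness follows directly from Definition~\ref{def-model-space}: the lift $W_{0,\cB}\colon x\mapsto C(x\odot\bar v\cdot\mu)$ sends $L^2(\mu,\hdot)$ isometrically into $\Mod(\cB)$ (by taking $b=0$), and $W_\cB$ is its composition with the quotient projection. Intertwining is immediate from the identity~\eqref{intertw-W0-new}: the correction term $\Psi\cdot\langle x,v\rangle$ is $\Psi$ applied to a constant column in $\C^m\subset E^2(\cB)\otimes\C^m$ (as $\cB$ is bounded), hence it lies in $\Psi\cdot(E^2(\cB)\otimes\C^m)$ and vanishes in $\cQ(\cB)$, giving $W_\cB T=M_\cB W_\cB$.

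The bijectivity arguments share three preparatory observations. First, since $\si(T)\subset\cB$, Proposition~\ref{prop-spectrum-T} forces $\psi\neq 0$ on $\overline{\cB^c}$, so by admissibility $\Psi^{-1}$ lies in $H^\infty(\cB^c)\otimes M_m(\C)$. Second, Lemma~\ref{lem-Cauchy-mu} yields $C(x\odot\bar v\cdot\mu)\in E^2_0(\cB^c)\otimes\C^m$ for every $x\in L^2(\mu,\hdot)$. Third---the technical crux---since $\mu(\pt\cB)=0$ (the Remark following Definition~\ref{def-adm-domain}) and Cauchy transforms are continuous off $\cG_s(\mu)$ (Lemma~\ref{L-infty-lemma}), the trace of $C(x\odot\bar v\cdot\mu)$ on $\pt\cB$ defined via Proposition~\ref{prop-traces} coincides with the boundary values of its $\cB^c$-restriction, so it sits in the $E^2_0(\cB^c)$ summand of the Smirnov direct sum~\eqref{smirn-class-decomp}. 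With these in hand, injectivity is short: if $W_\cB x=0$, then $f:=C(x\odot\bar v\cdot\mu)=\Psi a$ on $\cB$ for some $a\in E^2(\cB)\otimes\C^m$; setting $F:=\Psi^{-1}f\in E^2_0(\cB^c)\otimes\C^m$ and matching boundary traces across $\pt\cB$ gives $F|_{\pt\cB}=a|_{\pt\cB}$, an element lying simultaneously in both summands of~\eqref{smirn-class-decomp} and hence zero. Therefore $a=0$, $f\equiv 0$ on $\C$, and the distributional identity $\dbar f=-\pi\, x\odot\bar v\cdot\mu=0$ combined with hypothesis~(K) forces $x=0$.

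For surjectivity, given $g\in\Mod(\cB)$ I would form $\Psi^{-1}\cdot g|_{\pt\cB}\in L^2(\pt\cB)\otimes\C^m$ (well-defined by Proposition~\ref{prop-traces} and observation~(i)) and decompose it via~\eqref{smirn-class-decomp} as $a|_{\pt\cB}+c|_{\pt\cB}$ with $a\in E^2(\cB)\otimes\C^m$ and $c\in E^2_0(\cB^c)\otimes\C^m$. Setting $h:=g-\Psi a\in\Mod(\cB)$, its trace equals $(\Psi c)|_{\pt\cB}$, which sits entirely in the $E^2_0(\cB^c)$ summand because $\Psi c\in E^2_0(\cB^c)\otimes\C^m$. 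Writing the unique decomposition $h=C(x\odot\bar v\cdot\mu)+\tilde b$ afforded by $\Mod(\cB)$, observation~(iii) places the trace of $C(x\odot\bar v\cdot\mu)$ in the $E^2_0(\cB^c)$ summand while $\tilde b|_{\pt\cB}$ sits in the $E^2(\cB)$ summand; uniqueness of the Smirnov decomposition then forces $\tilde b|_{\pt\cB}=0$, whence $\tilde b=0$. Thus $g=C(x\odot\bar v\cdot\mu)+\Psi a$ and $[g]=W_\cB x$ in $\cQ(\cB)$. The principal obstacle is observation~(iii)---cleanly identifying the abstract $\Mod(\cB)$-trace of Proposition~\ref{prop-traces} with the $\cB^c$-side boundary values so that the Smirnov decomposition can be applied from both sides of $\pt\cB$; this is where the admissibility, dissectibility, and Verdera/Mattila-Melnikov theory genuinely enter. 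Once bijectivity is established, the open mapping theorem upgrades $W_\cB$ to a topological isomorphism of Hilbert spaces.
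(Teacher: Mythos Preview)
Your proof is correct and follows essentially the same route as the paper's: both reduce bijectivity to showing that every $w_0\in\Mod(\cB)$ admits a unique decomposition $w_0=W_{0,\cB}x-\Psi a$, and both solve for $a$ using that $\Psi,\Psi^{-1}\in H^\infty(\cB^c)$ together with the Smirnov splitting~\eqref{smirn-class-decomp}. The only cosmetic difference is that the paper phrases the solution via the Toeplitz operator $\tau_{\Psi^{-1}}=\tau_\Psi^{-1}$ (setting $a=-\tau_{\Psi^{-1}}\big(C_{\pt\cB}(w_0)|_\cB\big)$), whereas you apply the decomposition $L^2(\pt\cB)=E^2(\cB)\dotplus E^2_0(\cB^c)$ directly to $\Psi^{-1}g|_{\pt\cB}$; your ``observation~(iii)'' is exactly the paper's key equivalence $g\in W_{0,\cB}L^2(\mu|\cB,H_0)\Leftrightarrow C_{\pt\cB}(g)=0$ in $\cB$.
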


This theorem shows that the perturbation
matrix $\Psi$ is a close analogue of the characteristic
function of a contractive or dissipative linear operator. This function
appeared first in 1946 in the paper by Livsi\v{c} \cite{Livsic}, dedicated to
quasi-hermitian operators with defect indices $(1,1)$.

The following two statements are analogous to \cite{Y1}, Lemmas~2 and 6.

\begin{lemma}[Glueing lemma]
	\label{lemma-Glueing}
	Suppose $\Om_1$, $\Om_2$ and $\Om$ are admissible domains,
	$\overline{\Om}= \overline{\Om}_1\cup  \overline{\Om}_2$, and
	$\overline{\Om}_1\cap  \overline{\Om}_2$ is an arc. If
	$w_j\in \Mod(\Om_j)$ and $w_1=w_2$ on $\overline{\Om}_1\cap  \overline{\Om}_2$
	(in the sense of Proposition~\ref{prop-traces}), then
	$w_j=w|\overline{\Om}_j$ for a function
	$w\in \Mod(\Om)$.
\end{lemma}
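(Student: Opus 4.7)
I would build the glued function $w$ by piecing together the functional-model components of $w_1$ and $w_2$. Write $w_j = c_j + b_j$ with $c_j := C(x_j\odot \bar v\cdot\mu)$, $x_j \in L^2(\mu|\Om_j, \hdot)$, and $b_j \in \Etwo(\Om_j)\otimes\C^m$, and set $x := x_1\chi_{\Om_1} + x_2\chi_{\Om_2}$. The common arc $\ga := \overline{\Om}_1\cap \overline{\Om}_2$ is a subarc of the admissible boundaries $\pt\Om_j$, so it lies outside $\cG_s(\mu)$ for some $s > 0$; by the remark following Definition~\ref{def-adm-domain} we then have $\mu(\ga) = 0$, which makes $x$ a well-defined element of $L^2(\mu|\Om, \hdot)$ and gives $C(x\odot \bar v\cdot\mu) = c_1 + c_2$. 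The plan is to manufacture $B \in \Etwo(\Om)\otimes\C^m$ with $B|_{\Om_1} = b_1 - c_2|_{\Om_1}$ and $B|_{\Om_2} = b_2 - c_1|_{\Om_2}$; then $w := C(x\odot\bar v\cdot\mu) + B$ lies in $\Mod(\Om)$, and $w|_{\Om_j} = (c_1+c_2) + (b_j - c_{3-j}|_{\Om_j}) = c_j + b_j = w_j$.

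Next I verify that each $c_k$ restricts to a Smirnov-class function on the opposite domain. The measure $x_2\odot \bar v\cdot\mu$ is supported in $\overline{\Om}_2$, and since $\overline{\Om}_2\cap\Om_1 = \emptyset$ (points of $\ga$ belong to $\pt\Om_1$, not to the open set $\Om_1$), Lemma~\ref{lem-Cauchy-mu} applied to the admissible domain $\Om_1$ yields $c_2|_{\Om_1}\in \Etwo(\Om_1)\otimes\C^m$; symmetrically $c_1|_{\Om_2}\in \Etwo(\Om_2)\otimes\C^m$. Put $B_j := b_j - c_{3-j}|_{\Om_j}\in \Etwo(\Om_j)\otimes\C^m$. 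Since $\mu$ carries no mass on $\ga$, the Plemelj jumps of $c_1$ and $c_2$ across $\ga$ vanish, and hence each $c_k$ has a single $L^2(\ga)$ boundary value, equal from both sides of $\ga$. Thus the hypothesis $w_1|\ga = w_2|\ga$ reads $(b_1+c_1)|\ga = (b_2+c_2)|\ga$, which rearranges to $B_1|\ga = B_2|\ga$ in $L^2(\ga)\otimes\C^m$.

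To assemble $B_1$ and $B_2$ into an $\Etwo(\Om)$ function, let $\ga_j := \pt\Om_j\sm\ga$, so that $\pt\Om = \ga_1\cup\ga_2$ as oriented curves, and set
\[
F := B_1|\ga_1 \cdot\chi_{\ga_1} + B_2|\ga_2\cdot\chi_{\ga_2} \in L^2(\pt\Om)\otimes\C^m,\qquad B := C_{\pt\Om}F\big|_{\Om}.
\]
The splitting~\eqref{smirn-class-decomp} applied to each admissible $\Om_j$ gives $C_{\pt\Om_j}(B_j|\pt\Om_j) = B_j\chi_{\Om_j}$ (and vanishes on $\Om_j^c$). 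Adding the two identities and using that $\ga$ inherits opposite orientations as part of $\pt\Om_1$ and of $\pt\Om_2$, together with $B_1|\ga = B_2|\ga$, the $\ga$-contributions cancel and one obtains $C_{\pt\Om}F = B_1\chi_{\Om_1} + B_2\chi_{\Om_2}$ on $\Om$. Hence $B\in \Etwo(\Om)\otimes\C^m$ with $B|_{\Om_j} = B_j$, and the construction is complete. The main obstacle is the boundary analysis along $\ga$: one must carefully check that the restrictions of $c_j$ to the opposite admissible domain lie in the correct Smirnov space (via Lemma~\ref{lem-Cauchy-mu} and the David/Carleson controls built into admissibility), and that the abstract trace identity from Proposition~\ref{prop-traces} translates, through the vanishing of the Plemelj jump of $c_j$ across $\ga$, into the clean $L^2(\ga)$ equality $B_1|\ga = B_2|\ga$ required for the Cauchy-integral cancellation above.
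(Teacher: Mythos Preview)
Your argument is correct, and the underlying mechanism---cancellation of the boundary integrals along the common arc $\gamma$ because $w_1|\gamma=w_2|\gamma$ and the orientations are opposite---is the same as the paper's. The paper, however, packages this much more economically via the Cauchy--Pompeiu formula~\eqref{Cauchy-Pomp}: it applies that identity to $w_j$ on each $\Omega_j$ and sums, so that the $\gamma$-contributions cancel immediately and one obtains
\[
w = (2\pi i)^{-1} C_{\partial\Omega}(w) - \pi^{-1} C\big((\bar\partial w_1)\chi_{\Omega_1}+(\bar\partial w_2)\chi_{\Omega_2}\big)
\]
on $\Omega_1\cup\Omega_2$, which is manifestly in $\Mod(\Omega)$. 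Your route decomposes $w_j=c_j+b_j$ first and glues the analytic parts by hand; this costs you the extra appeal to Lemma~\ref{lem-Cauchy-mu} (to place $c_{3-j}|_{\Omega_j}$ in $E^2(\Omega_j)$) and the slightly delicate verification that the two traces of $c_j$ on $\gamma$ coincide. Both proofs work; the paper's simply avoids separating the analytic and non-analytic parts by letting the Cauchy--Pompeiu identity handle them simultaneously.
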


\begin{theorem}
	\label{thm-direct-sum} Assume conditions (B) and (K) are satisfied.
	Suppose $\cB$ is an admissible domain which contains $\si(T)$, 
	and let
	$\overline{\cB}=\cup_{j=1}^N \overline{\Om}_j$ be its finite
	partition, where $\Om_j$ are open and disjoint admissible
	domains. Assume that the union of boundaries $\pt\Om_j$ does not
	intersect $\cG_s(\mu)$ for some $s>0$ and that the
	perturbation determinant $\psi$ does not vanish on $\cup_{j=1}^N \,\pt\Om_j$.
	
	Then $T$ splits into a direct sum
	\[
	T=\dotplus_{j=1}^N T_j,
	\]
	where for each $j$, the spectrum of $T_j$ is contained in the closure
	of~$\Om_j$.
\end{theorem}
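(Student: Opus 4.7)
The plan is to transfer the problem to the functional model and then decompose it. By Theorem~\ref{thm-model}, $T_0$ is similar through $W_\cB$ to the quotient multiplication operator $M_\cB$ on $\cQ(\cB)$, so it suffices to construct a bounded linear isomorphism
\[
\Phi: \cQ(\cB) \longrightarrow \bigoplus_{j=1}^N \cQ(\Om_j),\qquad [g] \longmapsto \bigl([g|_{\Om_1}],\ldots,[g|_{\Om_N}]\bigr),
\]
intertwining $M_\cB$ with $M_{\Om_1}\oplus\cdots\oplus M_{\Om_N}$. Well-definedness is immediate, since any representative $g=\Psi a$ with $a\in\Etwo(\cB)\otimes\C^m$ restricts to $\Psi\cdot a|_{\Om_j}$ with $a|_{\Om_j}\in\Etwo(\Om_j)\otimes\C^m$; boundedness follows from Proposition~\ref{prop-traces} and the equivalent norm~\eqref{equiv-norm-mod-space}. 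Once $\Phi$ is shown to be an isomorphism, transporting the evident direct sum structure of $\bigoplus_j\cQ(\Om_j)$ back through $W_\cB^{-1}\circ\Phi^{-1}$ produces closed $T_0$-invariant subspaces $\cH_j\subset\cH_0$ with $\cH_0=\dotplus_j\cH_j$, and Lemma~\ref{lemma-multiplication-oper} applied to each $M_{\Om_j}$ gives the spectral inclusion $\si(T_j)\subset\overline{\Om}_j$ as required.

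For injectivity I would invoke the Glueing Lemma~\ref{lemma-Glueing}. Suppose $[g|_{\Om_j}]=0$ for every $j$, so $g|_{\Om_j}=\Psi a_j$ with $a_j\in\Etwo(\Om_j)\otimes\C^m$. On any partition arc $\ga\subset\pt\Om_i\cap\pt\Om_j$ both restrictions $g|_{\Om_i}$ and $g|_{\Om_j}$ share the common trace of $g$, so $\Psi a_i=\Psi a_j$ on $\ga$. By hypothesis, the compact set $\cup_k\pt\Om_k$ is contained in $\C\setminus\cG_s(\mu)$ for some $s>0$, hence Lemma~\ref{L-infty-lemma} ensures $\Psi$ is continuous there; combined with $\psi\ne 0$ on $\cup_k\pt\Om_k$ this yields uniform invertibility of $\Psi$ on the partition skeleton, forcing $a_i=a_j$ on $\ga$. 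Iterating the Glueing Lemma in its trivial instance (the case $v=0$, where $\Mod(\cdot)$ reduces to $\Etwo(\cdot)\otimes\C^m$) then assembles $\{a_j\}$ into a single $a\in\Etwo(\cB)\otimes\C^m$ with $a|_{\Om_j}=a_j$, whence $g=\Psi a$ and $[g]=0$ in $\cQ(\cB)$.

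Surjectivity is the substantive step and is accomplished by a Riemann--Hilbert type jump construction. Given arbitrary $w_j\in\Mod(\Om_j)$, the goal is to choose corrections $a_j\in\Etwo(\Om_j)\otimes\C^m$ so that the adjusted functions $\tilde w_j:=w_j-\Psi a_j$ have matching traces on every partition arc $\ga\subset\pt\Om_i\cap\pt\Om_j$; equivalently, $a_i-a_j=\Psi^{-1}(w_j-w_i)|_\ga$, an $L^2$ jump problem on the interior partition skeleton $\Ga:=\bigcup_{i\ne j}(\pt\Om_i\cap\pt\Om_j)$. Uniform boundedness of $\Psi^{-1}$ on $\Ga$ together with the $L^2$ traces of $w_j$ on $\Ga$ produce a jump datum $F\in L^2(\Ga)\otimes\C^m$, and I would solve the jump problem by the Cauchy integral
\[
A(z) \;=\; \frac{1}{2\pi i}\int_\Ga \frac{F(\zeta)\,d\zeta}{\zeta-z},\qquad z\in\C\setminus\Ga,
\]
whose restrictions $A|_{\Om_k}$ belong to $\Etwo(\Om_k)\otimes\C^m$ by admissibility of $\Ga$ and standard boundedness of the Cauchy operator on admissible curves, and whose Plemelj--Sokhotski jumps on $\Ga$ reproduce $F$ and hence furnish the required $a_j$. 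A final application of the Glueing Lemma to the now-compatible family $\{\tilde w_j\}$ delivers $g\in\Mod(\cB)$ with $\Phi([g])=([w_1],\ldots,[w_N])$. The hard part is precisely this surjectivity argument: verifying Smirnov-class membership of $A$ piecewise, orienting $\Ga$ consistently, and handling junctions where three or more closures $\overline{\Om}_j$ meet at a single point.
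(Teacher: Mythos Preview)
Your approach is essentially identical to the paper's: both define the restriction map $\cQ(\cB)\to\bigoplus_j\cQ(\Om_j)$ (the paper calls it $\wh\cJ$) and argue it is an isomorphism, with the same injectivity argument via trace matching, invertibility of $\Psi$ on the partition skeleton, and gluing the $a_j$ into a single $a\in E^2(\cB)\otimes\C^m$. For surjectivity the paper simply refers to Lemma~6 of \cite{Y1} and leaves the details to the reader, whereas you sketch the underlying Riemann--Hilbert jump construction explicitly; this is the natural approach and likely what \cite{Y1} does as well.
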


Formally, the above theorem does not require the dissectability
condition (D$'$). Its main application, however, is for measures
$\mu$ meeting this condition.
In this case,
the lengths of the sets $\Re \cG_s(\mu)$ and
$\Im \cG_s(\mu)$ tend to zero as $s\to 0$.
Therefore, by Lemmas ~\ref{L-infty-lemma} and ~\ref{lem-broken-line},
Theorem~\ref{thm-direct-sum} permits one to dissect the spectrum along straight lines
that cover densely the whole plane. This property is reflected in the
abstract notion of a dissectible Banach space operator,
which we introduce in Section~\ref{sec-Bishop-decomp-diss}.
Theorem~\ref{corr-thm-direct-sums}, which is a
consequence of the above Theorem~\ref{thm-direct-sum},
gives a sufficient condition for a finite rank perturbation of a normal operator to be dissectible.

\section{PROOFS OF LEMMAS FROM GEOMETRIC FUNCTION THEORY}
\label{sect-geom}

In the present section we prove Lemmas~\ref{L-infty-lemma}, ~\ref{lem-broken-line}
and Proposition~\ref{sum-dissect-measures}.

\begin{lemma}
	\label{lem1}
	For any real number $x$, there is a strictly
	increasing sequence $\{x_k^-: k\ge 0\}$ of numbers of the form
	$x_k^-=m_k/2^k$, $m_k\in \Z$, such that
	
	1) $\lim_k x_k^-=x$;
	
	2) $2^{-k}\le x - x_k^- \le 2^{-k+1}$.
\end{lemma}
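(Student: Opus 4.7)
The plan is to give an explicit offset dyadic approximation of $x$ from below and read off the three required conclusions. For each integer $k \ge 0$, I take $m_k$ to be the unique integer with
\[
m_k + 1 \le 2^k x < m_k + 2,
\]
equivalently $m_k = \lfloor 2^k x - 1 \rfloor$, and set $x_k^- = m_k / 2^k$. Dividing the defining inequality by $2^k$ immediately yields
\[
2^{-k} \le x - x_k^- < 2^{-k+1},
\]
which is exactly property~(2) and, since $2^{-k+1}\to 0$, also gives property~(1).

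It remains only to check the strict monotonicity of $\{x_k^-\}$. Applying the upper half of the displayed two-sided bound at level $k+1$ yields $x - x_{k+1}^- < 2 \cdot 2^{-(k+1)} = 2^{-k}$, hence $x_{k+1}^- > x - 2^{-k}$, while the lower half of the bound at level $k$ gives $x_k^- \le x - 2^{-k}$. Chaining these two inequalities produces
\[
x_{k+1}^- > x - 2^{-k} \ge x_k^-,
\]
which is the required strict inequality.

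In other words, the lemma is an elementary calibration of the standard dyadic approximation from below, arranged so that each $x_k^-$ stays bounded away from $x$ by at least $2^{-k}$; there is no real obstacle to the argument. The force of the statement lies precisely in the quantitative lower bound $x - x_k^- \ge 2^{-k}$, which will later be combined with the symmetric construction from above and with the $\Honecont$-smallness of the sets $\cG_s(\mu)$ to place vertices of approximating broken lines safely outside those sets, as needed in Lemma~\ref{lem-broken-line}.
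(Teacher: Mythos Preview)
Your proof is correct and in fact identical to the paper's construction: the paper sets $s_k=\lfloor 2^k x\rfloor/2^k$ (the $k$-digit binary truncation) and then $x_k^-=s_k-2^{-k}$, which is precisely your $m_k/2^k$ with $m_k=\lfloor 2^k x-1\rfloor=\lfloor 2^k x\rfloor-1$. You are actually slightly more careful than the paper, since you spell out the strict monotonicity argument explicitly rather than leaving it implicit.
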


\begin{proof}
	Set $s_k$ to be the
	truncation of the binary representation of $x$ with
	$k$ digits after the point. Then
	$\{s_k\}$ increases, tends to $x$, and
	$0\le x - s_k \le 2^{-k}$.
	Now put $x_k^-=s_k-2^{-k}$.
\end{proof}

\begin{proof}[Proof of Lemma \ref{L-infty-lemma}]
	We can assume that $s=2^{-k_0}$ for some
	$k_0\ge 1$.
	Take any $f\in L^\infty(\nu)$ with
	$\|f\|_{L^\infty(\nu)}\le 1$ and set $\eta=\eta_f=C(f\nu)$.
	Put $z=x+iy$, and let $\{x_k^-\}_{k\ge 1}$ be the sequence approximating $x$ from
	below, constructed in Lemma~\ref{lem1}  and
	$\{x_k^+\}$ the sequence
	approximating $x$ from above, with analogous properties.
	Similarly, take two sequences $\{y_k^-\}, \{y_k^+\}$, approximating $y$ from below
	and from above with the same properties.
	We have $x_k^- < x < x_k^+$,
	$2^{-k}\le |x-x_k^\pm|\le 2^{-k+1}$, and
	the same for $y$ and $\{y_k^\pm\}$. Consider
	the nested sequence of rectangles
	\[
	R_k(z)=[x_k^-,x_k^+)\times [y_k^-,y_k^+), \quad k\ge k_0-1,
	\]
	whose intersection is the point $z$.
	For any $k\ge k_0$,
	$R_{k-1}(z)\sm R_{k}(z)$ is a union of no more than
	16 dyadic squares from the generation $\cD_{k}$.
	The (dyadic) squares $Q$ satisfying
	\[
	Q\in \cD_k \quad \text{and}
	\quad Q\subset
	R_{k-1}(z)\sm R_{k}(z)
	\quad \text{for some } k\ge k_0
	\]
	are disjoint, and their union
	over all $k\ge k_0$
	equals to $R_{k_0-1}(z)\sm \{z\}$.
	We arrange them in a single sequence $\{Q_m\}_{m\ge 1}$.

	Notice that any square in the sequence
	$\{Q_m\}$ is at moderate distance from $z$ in the sense that
	\begin{equation}\label{moder-dist}
	|Q_m|\le \dist(Q_m, z)\le 2^{3/2} |Q_m|.
	\end{equation}
	
	Since
	$z\notin \cG_s(\nu)$ and $|Q_m|\le 2^{-k_0}$ for all
	$m\ge 1$, it follows that none of the squares
	$Q_m$ is heavy (see \eqref{eq3n}). We also observe that
	$\nu(\{z\})=0$ (otherwise any sufficiently small
	dyadic square containing $z$
	would be heavy, which would imply
	that $z\in \cG_s(\nu)$).
	Whence
	\begin{equation}
	\label{eq5}
	\begin{aligned}
	|\eta_f(z)|  &
	\le
	\int_{\C\sm R_{k_0-1}(z)} \frac{|d\nu(t)|}{|t-z|}
	+
	\sum_{m} \int_{Q_m} \frac{|d\nu(t)|}{|t-z|}       \\
	& \le 2^{-k_0+1}\, \nu(\C) +
	\sum_{m} \frac {\nu(Q_m)}{|Q_m|}   \\
	& \le 2^{-k_0+1}\, \nu(\C) +
	\sum_{m} \,  \frac {h(|Q_m|)}{|Q_m|}  \\
	& \le 2^{-k_0+1}\, \nu(\C) +
	16 \GH (2^{-k_0}),
	\end{aligned}
	\end{equation}
	where $\GH(\tau)=\int_0^\tau \frac {h(t)}{t^2}\, dt<\infty$.
	This proves the estimate \eqref{estim-eta}.

	With a small extra effort, we also derive the continuity of
	$\eta_f$ on $\C\sm \cG_s(\nu)$. Fix some $z\in \C\sm \cG_s(\nu)$, and
	let us prove the continuity of $\eta_f | \C\sm \cG_s(\nu)$ at $z$.
	It suffices to consider the case when $z\in (\supp \nu)\sm
	\cG_s(\nu)$.
	
	First we isolate the following observation.
	Take some $w\in B(z,r)\sm \cG_s(\nu)$, where the radius
	$r$ is small.
	Choose
	$k_1$ so that $2^{-k_1-2}\le r<2^{-k_1-1}$, and
	repeat the above construction of squares $Q_m$ and
	the estimate \eqref{eq5}, taking $k_1$ in place of $k_0$.
	We infer that,
	for any $f$ as above and any $\eps>0$, there exists
	$r>0$ such that
	\[
	\Big|\int_{B(z,r)}\frac{f(t)d\nu(t)}{t-w}\Big| <
	\Big|\int_{R_{k_1-1}(z)}\frac{f(t)d\nu(t)}{t-w}\Big| <
	16 \GH(2^{-k_1}) < \frac \eps 3
	\]
	whenever $w, z\in B(z,r)\sm \cG_s(\nu)$.
	
	We check the continuity at $z$ directly by definition.
	Fix $\eps>0$.
	Let $w\in \C\sm \cG_s(\nu)$, $|z-w|<\de$, where $\de$ is to be determined.
	Choose $r$ as above, and assume $\de<r$.
	Then
	\beqn
	\label{est}
	\begin{aligned}
		|\eta_f(z)-\eta_f(w)|
		& \le
		\Big|\int_{B(z,r)}\frac{f(t)d\nu(t)}{t-z}\Big|
		+
		\Big|\int_{B(z,r)}\frac{f(t)d\nu(t)}{t-w}\Big| \\
		& \qquad  +
		\bigg|\int_{\C\sm B(z,r)}\, f(t)d\nu(t)[(t-z)^{-1}-(t-w)^{-1}]\,\bigg|\, .
	\end{aligned}
	\neqn
	Since the last integral is continuous
	as a function of $w$ on the open disk
	$B(z,r)$, it follows that $|\eta_f(z)-\eta_f(w)|<\eps$ whenever
	$w\in \C\sm \cG_s(\nu)$, $|z-w|<\de$ and $\de$ is sufficiently small.
	That is, $\eta_f$ is continuous on $\C\sm \cG_s(\nu)$.
\end{proof}

Next we pass to the proof of Lemma~\ref{lem-broken-line}.
We recall that a point $x_0$ is called a (Lebesgue) density
point of a measurable set $A\subset \R$ if
$x_0\in A$ and
$\cH^1\big([x_0-\eps, x_0+\eps]\cap A\big)/(2\eps)\to 1$ as $\eps\to 0^+$.
In virtue of Lebesgue Density Theorem,
given a subset $A$ of $\R$ of positive measure,
almost every point of $A$ is a density point.

\begin{proof}[Proof of Lemma~\ref{lem-broken-line}]
	It is easy to see that any scale function $h$ satisfies $h(t)\le Ct$ for $t\in [0,1]$.
	Hence the Carleson constant of every set $\C\sm \cG_s(\mu)$ with respect to $\mu$ is
	bounded. Indeed, choose $k\ge 0$ with $2^{-k}<s$.
	Let $z\in \C\sm \cG_s(\mu)$. The closest point $p$ to $z$ in the discrete grid
	$2^{-k}\Z\times 2^{-k}\Z$ is the vertex of four dyadic squares from the generation
	$\cD_k$. If $Q$ is any of these squares, then $|Q|\le s$ and $10 Q^o$ contains $z$, which implies that
	$\mu(Q)\le h(|Q|) \le C|Q|$.
	This shows that the constant $\Carl(\C\sm \cG_s(\mu), \mu)$ is bounded for any $s$.
	Denote by $X_s$ and $Y_s$ the $x$- and $y$- projections of the set $\cG_s(\mu)$.
	They are open and their lengths tend to zero as $s\to 0$.
	Note also that $\{X_s\}$ and $\{Y_s\}$ are increasing families of sets.
	Define $\A_x$ to be the set of points in $\R$ that are
	density points of at least one of the sets $\R\sm X_s$.
	Define $\A_y$ similarly, using
	$Y_s$ in place of $X_s$.
	Then $\A_x$ and $\A_y$ are measurable, and their complements in $\R$
	have Lebesgue measure zero. For any point $x_0$ in
	$\A_x$, there is some $s>0$ such that
	$x_0\in \R\sm X_s$ and $x_0$ can be approximated
	both from above and from below by a sequence of points in
	the same set $\R\sm X_s$. The points of
	$\A_y$ have a similar property.
	
	This proves assertion 2).
	
	Any broken line $\ga$ as in Lemma is contained in $\C\sm \cG_s(\mu)$ for some sufficiently
	small $s$. In conclusion assertion 1) holds.
\end{proof}

\begin{proof}[Proof of Proposition~\ref{sum-dissect-measures}]
	\textit{Case 1: }
	Assume first that the hypotheses hold, and all scale functions coincide:
	$h_k=h$ for all $k$.
	We normalize the measures $\mu_k$, assuming $\mu_k(\C)=1$ for all $k$.
	We prove that there exist constants $c_k>0$ such that
	$\sum_k c_k\mu_k$ is a finite and $h$-dissectible measure.
	Put $c_k=\al_k^2$, where $\al_k>0$.
	
	First choose positive numbers $s_k\to 0$
	such that
	\beqn
	\label{estim-s-k}
	\Honecont\big(\cG_{s_k}(\mu_k)\big) \le 2^{-k}.
	\neqn
	Note that if a square $Q$ is $\al_k\mu_k$-heavy, then
	\[
	h(|Q|)< \al_k\mu_k(|Q|)\le \al_k.
	\]
	Choose $\al_k\in (0,1)$ so that these inequalities imply that $|Q|\le s_k$.
	We also assume
	\[
	\ga:=\sum_k\al_k\le 1.
	\]
	With this choice, the measure $\nu:=\sum_k c_k\mu_k$ turns out to be
	finite and $h$-dissectible. Indeed, fix any $\eps>0$. Find some $N$ such
	that $2^{-N}<\eps/2$.
	Since each $\mu_k$ is
	$h$-dissectible, we can choose $q>0$ so that $\Honecont\big(\cG_q(\mu_k)\big)< \eps/(2N)$ for
	$k=1, 2, \dots, N$.
	We assert that whenever $0<s<q$,
	\beqn
	\label{nu-diss}
	\cG_s(\nu)\subset \bigcup_{k=1}^N \cG_q(\mu_k)\cup \bigcup_{k=N+1}^\infty \cG_{s_k}(\mu_k).
	\neqn
	In view of \eqref{estim-s-k} we find:
	\[
	\Honecont\big(\cG_s(\nu)\big)<
	N\cdot \frac{\eps}{2N}+\sum_{k=N+1}^\infty 2^{-k}=\frac \eps 2 + 2^{-N}<\eps,
	\]
	and therefore, $\nu$ is dissectible, because $\eps$ was arbitrary.
	
	So it remains to check~\eqref{nu-diss}. In order to do so,
	take any dyadic square $Q$, which is $\nu$-heavy and satisfies
	$|Q|<s<q$.
	Then
	$\ga\nu(Q)\ge \ga h(|Q|)$, which we rewrite as
	\[
	\ga\sum_k \al_k^2 \mu_k(Q)\ge \sum_k \al_k h(|Q|).
	\]
	Hence for some $k$,
	\[
	\mu_k(Q)\ge \al_k\mu_k(Q) \ge \frac 1 \ga\, h(|Q|)\ge h(|Q|).
	\]
	So $Q$ is $\al_k\mu_k$-heavy (and $\mu_k$-heavy). By the choice of $\al_k$, this implies $|Q|\le s_k$.
	Hence
	\[
	10 Q^o \subset \cG_t(\mu_k),
	\]
	where $t=\min(s, s_k)<q$. This implies relation ~\eqref{nu-diss}.

	\medskip

	\textit{Case 2 (the general case): }
	Notice first that whenever $h$, $h_1$ are scale functions with
	$h_1 \le h$, then $Q$ $h$-heavy implies that $Q$ is
	$h_1$-heavy. Hence, if a measure $\nu$ is  $h_1$-dissectible, then it
	is also $h$-dissectible with respect to any scale function $h$
	such that $h_1 \le h$ on an interval $[0,\eps]$, $\eps>0$.
	
	As a consequence, we will infer the general case from
	the above Case 1 once we check
	the following claim:
	\textit{For any sequence of scale functions $\{h_k: k\ge 1\}$,
		there exists a scale function $h$ such that for any $k$,
		$h\ge h_k$ on some (nonempty) interval $(0, \eps_k]$.
	}
	
	This is simple. Namely, let $\{\be_k\}$
	be a sequence of positive numbers such that
	$\sum_k\beta_k<\infty$.
	Put
	\[
	h(t)=h_1(t)+\sum_{k\ge 2} g_k(t),
	\]
	where
	$g_k(t)=\min(h_k(t), \beta_k)$.
	Then $h(t)$ is finite for any $t\ge 0$.
	If the numbers $\beta_k>0$ tend sufficiently rapidly to zero, then
	$h$ satisfies the integral condition~\eqref{int-condn-h} and so it is a desired scale function.
\end{proof}

\section{PROOFS PERTAINING TO THE FUNCTIONAL MODEL}
\label{sect-proofs-model}

First we mention Cauchy-Pompeiu formula for functions belonging to the space
$\Mod(\Om)$: \textit{ If $\Om$ is admissible and $w\in \Mod(\Om)$,
	then
	\beqn
	\label{Cauchy-Pomp}
	(2\pi i)^{-1} C_{\pt\Om}(w)(z) -
	\pi^{-1} C(\bar\pt w)(z) =
	\begin{cases}
		w(z), & \text{if } z\in\Om \\
		0, & \text{if } z\notin\overline{\Om}
	\end{cases}
	\neqn
	(notice that $w|\pt\Om\in L^2(\pt\Om, |dz|)$).
}

Indeed, by the definition of $\Mod(\Om)$, it suffices to check
this formula for functions $w=b\in \Etwo(\Om)\otimes \C^m$ and for
functions of the form $w=C(x \odot \bar v\cdot \mu)$, where $v\in
L^2(\mu, H_0)$. In the first case, it is just Cauchy's formula.
The second case is also obvious because
$\bar\pt w = - \pi x \odot \bar v \cdot \mu$.

\begin{proof}[Proof of Proposition \ref{prop-traces}]
	Let $g=C (x\odot \bar v\cdot\mu) + b$ be a function in $\Mod(\Om)$, as
	in the Definition~\ref{def-model-space} of the model space
	and let $\ga$ be a curve meeting the hypotheses. Suppose first that
	$\ga\subset\Om$.
	
	If $x$ is bounded, then by (D$'$) and Lemma~\ref{L-infty-lemma},
	one can define $g$ on the set $\Om\sm \cap_s\cG_s(\mu)$; it will be
	continuous on $\Om\sm \cG_s(\mu)$ for any $s>0$. Since $\ga$ does not
	touch some set $\cG_{s_0}(\mu)$, this permits one to define
	$g|\ga$ as a restriction.
	
	Now Lemma~\ref{lem-Cauchy-mu} implies that the map
	$g\to g|\ga$ extends by continuity to functions $g\in \Mod(\Om)$ as
	above, corresponding to arbitrary elements $x\in L^2(\mu, H_0)$.
	
	We adopt this extension as definition of the trace.
	
	By the same Lemma~\ref{lem-Cauchy-mu}, the map
	$g\to g|\ga$ is also well-defined and bounded if $\ga$ is
	a subarc of $\pt\Om$.
\end{proof}

\begin{proof}[Proof of Proposition \ref{prop-spectrum-T}]
	(1) The definition of $\cH_0$ implies that
	the essential spectrum $\si_{ess}(N|\cH_0)$, and hence
	also $\si_{ess}(T_0)$, coincide with
	$\Der\supp\mu$.
	
	(2) Fix a point $\la$, which is not in the support of $\mu$. By the above,
	$\la\in\si(T_0)$ if and only if $\la$ is an eigenvalue of $T_0$. This rewrites as
	$(z-\la)f(z)=-u^t c$, where $f\in\cH_0$ ($f\ne 0$) is the corresponding eigenfunction
	and $c=\langle f, v\rangle$. It is easy to see that these two equations are solvable
	if and only if $\Psi(\la) c=0$ has a nonzero solution.
\end{proof}

\begin{proof}[Proof of Proposition \ref{prop-closed-ss}]
	Suppose $\pt\Om$ admissible.

	(a) First we check that $\Psi \big(\Etwo(\Om)\otimes \C^m\big)$ is
	contained in $\Mod(\Om)$. Take any $a\in \Etwo(\Om)\otimes \C^m$.
	We need to verify that $g:=\Psi a\in \Mod(\Om)$. Note the identity
	\beqn
	\label{bar pt g}
	\bar\pt g= - \pi (\bar v\odot u^t)\cdot a= - \pi \bar v \odot (u^t
	\cdot a) \quad \text{in }\Om.
	\neqn
	Since Carleson's embedding operator $J_\Om$
	is bounded on $\Etwo(\Om)$ and $u$ is a bounded function,
	$x:=-(u^t \cdot a)\chi_\Om$ belongs to $L^2(\mu)$. By \eqref{bar pt g},
	the function
	\[
	b:= g - C (\bar v \odot x\cdot \mu)
	\]
	is analytic in $\Om$.
	Let $\Om_n$ be the domains that correspond to $\Om$,
	whose existence is asserted in the Definition~\ref{def-adm-domain} of
	an admissible Jordan curve. Since Carleson's constants of $\pt\Om_n$ with respect to $\mu$ are
	uniformly bounded and $\Psi$ is bounded on the union of these curves
	(see Lemma~\ref{L-infty-lemma}), we deduce
	\[
	\sup_n \|C(\bar v \odot x\cdot \mu)\|_{L^2(\pt\Om_n)}<\infty
	\quad \text{and} \quad
	\sup_n \|g\|_{L^2(\pt\Om_n)}<\infty.
	\]
	Therefore the integrals $\int_{\pt\Om_n}|b|^2\, |dz|$ are
	uniformly bounded, which means that $b$ belongs to $E^2(\Om)$. Hence
	$g= b + C(\bar v \odot x \cdot \mu)$ is in $\Mod(\Om)$.

	The same arguments imply the boundedness of the map $a \to \Psi a$, acting on
	$\Etwo(\Om)\otimes \C^m$ and with values on $\Mod(\Om)$.
	
	(b) We verify now the last statement referring to the trace of $\Psi a$ on $\pt\Om$.
	Let $C_a({\overline\Om})$ stand for the space of functions, analytic in $\Om$ which
	can be continuously extended to the boundary.
	Consider first the case $a\in C_a({\overline\Om})\otimes \C^m$
	and define $x,b$ as above.
	Then $x\in L^\infty(\mu)$, which implies that $C (\bar v \odot x\cdot \mu)$ is well-defined at any point of
	$\pt\Om$ and is continuous on this curve. Hence $b$ is also continuous on $\pt\Om$ and so
	$b\in C_a({\overline\Om})\otimes \C^m$.
	Thus equality $\Psi a = C (\bar v \odot x\cdot \mu) +b$ holds pointwisely on
	$\pt\Om$, which implies that the trace of $\Psi a$ on $\pt\Om$ equals
	$\Psi\cdot a|\pt\Om$.
	
	The case of a general element $a$ in $\Etwo(\Om)\otimes \C^m$ is obtained by approximating $a$
	in the norm of $\Etwo(\Om)\otimes \C^m$ by a sequence
	of functions $a_n$ in $C_a({\overline\Om})\otimes \C^m$ and
	by invoking the boundedness of the trace map $w\mapsto w|\pt\Om$, $w\in \Mod(\Om)$.

	(c)
	To prove that $\Psi \big(\Etwo(\Om)\otimes \C^m\big)$ is a closed subspace
	of $\Mod(\Om)$, we use expression
	\eqref{equiv-norm-mod-space}, which gives an equivalent norm on $\Mod(\Om)$.
	Since $\|\Psi^{-1}\|$ is uniformly bounded on
	$\pt\Om$, it follows that $\|\Psi a\|_{\Mod(\Om)}\ge \de \|a\|_{\Etwo}$ for
	any $a\in \Etwo(\Om)\otimes\C^m$, where $\de>0$. Hence the
	image of the map $a\in \Etwo(\Om)\otimes\C^m\mapsto \Psi a$ is closed in $\Mod(\Om)$.
\end{proof}

\begin{proof}[Proof of Lemma \ref{lemma-multiplication-oper}]
	Let $g\mapsto x(g)$ be the map defined in~\eqref{bded-maps-mod-space}.
	Since $x(g)$ is obtained via the Cauchy-Riemann operator $\bar\pt g$, it follows that
	$x(a g)= a x(g)$ for $g\in \Mod(\Om)$ and  $a$ analytic in a
	neighborhood of $\overline\Om$. Applying this observation to $a(z)=z$ and using the equivalent
	norm~\eqref{equiv-norm-mod-space} on the model space, we deduce
	the boundedness of the operator $g\mapsto zg$ on $\Mod(\Om)$. For the same reasons,
	for any $\la\in \Om^c$, the multiplication operator
	$g\mapsto (z-\la)^{-1} g$ provides a bounded inverse to the operator
	$g\mapsto (z-\la) g$.
\end{proof}

\begin{proof}[Proof of Theorem \ref{thm-model}]
	We can repeat the arguments in the proof of
	Theorem~1 in \cite{Y1} (see \cite{Y1}, p. 66).
	The intertwining property follows from ~\eqref{intertw-W0-new}.
	It remains to check that, given
	$w_0$ in $\Mod_B$, there exist a unique $x$ in $L^2(\mu,H_0)$
	and a unique $a\in E^2(\cB)\otimes \C^m$ satisfying
	\beqn
	\label{eqn-toe}
	(W_{0, \cB} x)(z)= w_0(z)+\Psi(z)a(z), \quad z\in \cB.
	\neqn
	We appeal to a class of Toeplitz type operators.
	If $F$ is matrix-valued function in $L^\infty(\pt\cB, L(\C^m))$,
	then the Toeplitz operator $\toe_F$ acts on $E^2(\cB)\otimes \C^m$ by
	\[
	\toe_F f(z):= C_{\pt\cB}(Ff)(z), \quad z\in \cB.
	\]
	It is bounded. It is easy to verify that
	$\toe_G\toe_F= \toe_{GF}$ whenever
	$F, G\in H^\infty(\cB^c, L(\C^m))$. In particular, $\toe_{\Psi}$ is invertible, and
	$\toe_{\Psi}^{-1}=\toe_{\Psi^{-1}}$.

	Notice that, for a function $g$ in $\Mod(\cB)$, one has
	\beqn
	g\in W_{0,\cB} L^2(\mu|\cB, H_0) \Leftrightarrow
	C_{\pt\cB} (g) =0 \quad \text{in }\cB.
	\neqn
	Therefore \eqref{eqn-toe} is solvable with respect to $x$ if and only if
	\[
	a=-\toe_{\Psi^{-1}}\big(C_{\pt\cB}(w_0)\big)\big|_\cB.
	\]
	If we define $a$ by this formula, by Proposition~\ref{prop-closed-ss}, $\Psi a$ and hence
	$w:=w_0+\Psi a$ belong to $\Mod(\cB)$. Therefore $a$ and $x$ are determined by
	\eqref{eqn-toe}.
\end{proof}

\begin{proof}[Proof of the Glueing lemma \ref{lemma-Glueing}]
	Suppose $w_1$, $w_2$ satisfy the hypotheses
	and define a function $w$ on
	$\Om_1\cup \Om_2$ by $w\big|_{\Om_j}=w_j$.
	By summing formulas~\eqref{Cauchy-Pomp}
	applied to $\Om_1$, $\Om_2$, we infer
	\[
	w= (2\pi i)^{-1} C_{\pt\Om}(w) - \pi^{-1} C\big((\bar\pt w_1)\chi_{\Om_1}+(\bar\pt w_2)\chi_{\Om_2}\big)
	\]
	on $\Om_1\cup \Om_2$. The right hand part is a function in
	$\Mod_\Om$.
\end{proof}

We remark that the same lemma holds for finitely many bordering domains, instead of two
of them.

\begin{proof}[Proof of Theorem \ref{thm-direct-sum}]
	%
	%
	We can follow the lines of the proof of Lemma~6 in \cite{Y1}. First we
	prove the assertion for the operator $T_0=T|\cH_0$, whose functional
	model is given by Theorem~\ref{thm-model}.
	Consider the restriction maps $\cJ_k w=w|\Om_k\in \Mod(\Om_k)$, $w\in \Mod(\Om)$.
	It follows from Proposition~\ref{prop-traces} and
	the expression~\eqref{equiv-norm-mod-space} for the norm in the model spaces that
	these are bounded linear operators.
	They induce operators
	$\wh \cJ_k: \cQ(\Om)\to \cQ(\Om_k)$, which are well-defined and bounded.
	We assert that
	$\wh\cJ=(\wh\cJ_1, \dots, \wh\cJ_N): \cQ(\Om)\to \oplus_k \cQ(\Om_k)$ is an isomorphism.
	Let $w\in \Mod(\Om)$, and let
	$\hat w$ be the corresponding element (class of equivalence) in $\cQ(\Om)$.
	If $\wh\cJ \hat w=0$, then $w|\Om_k=\Psi a_k$ for some functions $a_k\in E^2(\Om_k)$, $k=1, \dots, N$.
	It is easy to check that for any
	$g\in\Mod(\Om)$ and any domains $\Om_j$, $\Om_k$, bordering
	by an arc $\ga$, the trace of $g|\Om_j$ on $\ga$ equals the
	trace of $g$ on $\ga$. Hence,
	by the last statement of Proposition~\ref{prop-closed-ss},
	$a_k=a_j$ on $\pt\Om_k\cap \pt\Om_j$, $1\le k< j\le N$.
	By applying the Cauchy integral representation of $E^2$ functions,
	one finds there is some $a\in E^2(\Om)\otimes \C^m$ such that
	$a_k=a|_{\Om_k}$ for all $k$. This shows that $\ker\wh\cJ=0$.
	
	The fact that $\wh\cJ$ is also onto is shown in the same way as in 
	the proof
	of Lemma~6 in \cite{Y1}, and we leave the details to the reader.
	
	In the general case, the
	assertion follows, because
	$T|\cH\ominus \cH_0$ is normal and
	$T|\cH_0$ has the same perturbation determinant as $T$.
\end{proof}

\section{THE EXISTENCE OF INVARIANT SUBSPACES}

The aim of this section is to prove the following result.

\begin{theorem}\label{thm_hyperinf}
	Suppose the linear operator $T$ is given by~\eqref{perturb-T} and satisfies conditions (D) and (K).
	If either $\Der\supp\mu$ is not connected or
	there exists a domain $G$, whose boundary is a Lipschitz Jordan curve,
	such that $G\cap \Der\supp\mu=\emptyset$ and
	the intersection of $\supp\mu$ with the boundary of $G$
	contains an arc,
	then $T$ has a nontrivial
	invariant subspace.
\end{theorem}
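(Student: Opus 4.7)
The plan is to reduce both dichotomy cases to the dissection Theorem~\ref{thm-direct-sum}: once an admissible domain $B\supset \sigma(T_0)$ is partitioned as $\overline{B}=\overline{\Omega}_1\cup\overline{\Omega}_2$ into two admissible Jordan subdomains whose common dissecting boundary avoids $\cG_s(\mu)$ for some $s>0$ and carries no zero of $\psi$, and such that $\sigma(T_0)$ meets both $\Omega_j$, that theorem yields a nontrivial splitting $T_0=T_1\dotplus T_2$, hence a nontrivial invariant subspace of $T_0$ and therefore of $T$ (if $\cH_0\subsetneq\cH$ then $\cH\ominus\cH_0$ is already a nontrivial reducing subspace, so we may assume $\cH_0=\cH$). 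The outer frame $\partial B$ is chosen as a large axis-parallel rectangle via Lemma~\ref{lem-broken-line}, with vertices in $\A_x+i\A_y$, avoiding $\cG_s(\mu)$ and taken large enough that $\psi\ne 0$ on it (using $\psi(\infty)=1$).

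\textbf{Case (a).} Write $\Der\supp\mu=K_1\sqcup K_2$ as a disjoint union of nonempty compacta at positive distance. Using (D) and Lemma~\ref{lem-broken-line}, choose an axis-parallel broken line $L$ with vertices in $\A_x+i\A_y$ that lies in the separating strip between $K_1$ and $K_2$, avoids $\cG_s(\mu)$, and has endpoints on opposite sides of $\partial B$. Such an $L$ is at positive distance from $\supp\mu$ (since all accumulation points of $\supp\mu$ belong to $K_1\cup K_2$), so $\psi$ is holomorphic in a neighbourhood of $L$; its zeros there are isolated, and a further vertex perturbation inside the dense set $\A_x+i\A_y$ yields $\psi\ne 0$ on $L$. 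Then $L\cup\partial B$ partitions $\overline B$ into two Jordan subdomains with $K_j\subset\Omega_j$, and Theorem~\ref{thm-direct-sum} applies.

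\textbf{Case (b).} The arc $\alpha\subset\partial G\cap\supp\mu$ lies in $\Der\supp\mu$, since every point of $\alpha$ is an accumulation point of the continuum $\alpha\subset\supp\mu$. Being a subarc of a Lipschitz Jordan curve, $\alpha$ has a tangent almost everywhere; by the $x\leftrightarrow y$ symmetry we may assume that the tangent is non-horizontal on a subset of positive arc-length. For $y_1$ in a small interval around the $y$-coordinate of an interior point $p_0\in\alpha$ with non-horizontal tangent, and in the full-measure set $\A_y$ of Lemma~\ref{lem-broken-line}, the horizontal chord $L=\{y=y_1\}\cap B$ avoids $\cG_s(\mu)$, meets $\alpha$ transversally at finitely many interior points, and cuts $B$ into two Jordan sub-rectangles $\Omega_1$, $\Omega_2$. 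Since $L$ splits $\alpha$ (and hence $\Der\supp\mu$) into subarcs on both sides, each $\Omega_j$ contains a piece of $\sigma(T_0)$, and both summands are nontrivial.

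\textbf{Main obstacle.} The delicate remaining point in Case (b) is ensuring $\psi\ne 0$ at the crossings $L\cap\alpha\subset\supp\mu$, where $\psi$ is only continuous on $\C\setminus\cG_s(\mu)$ (Lemma~\ref{L-infty-lemma}) rather than holomorphic. Off $\supp\mu$, the zeros of $\psi$ along $L$ are isolated and easily avoided; at the crossings, one must select $y_1$ so that none of the finitely many crossing points is a zero of $\psi|_\alpha$. This should follow from a Fubini-type / co-area argument on the one-parameter family of horizontal lines, using that $\psi\not\equiv 0$ on $\alpha$ as a consequence of $\psi(\infty)=1$ and the holomorphy of $\psi$ off $\supp\mu$. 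Carrying out this measure-theoretic selection compatibly with the admissibility constraints of Lemma~\ref{lem-broken-line} is the technical heart of the argument.
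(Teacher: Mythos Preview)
Your overall plan is sound, but Case~(b) has the genuine gap you yourself flag, and your proposed cure does not work. You write that ``$\psi\not\equiv 0$ on $\alpha$ as a consequence of $\psi(\infty)=1$ and the holomorphy of $\psi$ off $\supp\mu$.'' Neither fact says anything about $\psi$ restricted to $\alpha\subset\supp\mu$: on $\C\sm\cG_s(\mu)$ the function $\psi$ is merely continuous (Lemma~\ref{L-infty-lemma}), and nothing a~priori prevents it from vanishing on the whole arc $\alpha$. A Fubini/co-area argument can thin out a set of zeros that is already known to have null arc-length, but it cannot manufacture a nonzero point on $\alpha$ if $\psi|_\alpha\equiv 0$.

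The missing idea is that the hypothesis $G\cap\Der\supp\mu=\emptyset$ makes $\psi$ \emph{meromorphic} in $G$ (its only singularities there are the isolated points of $\supp\mu$). After the initial reduction ``if $\psi\equiv 0$ on some component of $\C\sm\Der\supp\mu$, then $T$ has an eigenvalue'' (Proposition~\ref{prop-spectrum-T}(2)), one knows $\psi\not\equiv 0$ in $G$. Privalov's uniqueness theorem then forces the boundary values of $\psi$ on $\partial G$ to be nonzero $\cH^1$-a.e., so $\psi(z)\ne 0$ for a.e.\ $z\in\alpha$. This is the key input; your ``$\psi(\infty)=1$'' plays no role here.

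The paper's geometry in Case~(b) is also simpler than your transversal-line-plus-co-area scheme: pick a single point $z_0=x_0+iy_0\in\gamma$ with $\psi(z_0)\ne 0$, $x_0\in\A_x$, $y_0\in\A_y$ (possible by the a.e.\ nonvanishing just established and measure-theoretic bookkeeping via Lemma~\ref{lem-broken-line}), and use continuity of $\psi$ on $\C\sm\cG_s(\mu)$ to build a tiny admissible rectangle $R$ around $z_0$ on which $|\psi|>\delta$. Theorem~\ref{thm-direct-sum} then dissects $T_0$ along $\partial R$; since $\gamma\not\subset R$ both pieces carry a subarc of $\gamma\subset\sigma(T_0)$, hence both are nonzero. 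This avoids your worry about ``finitely many crossing points'' entirely.

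For Case~(a) your route works but is heavier than needed. Because a separating curve can be chosen disjoint from $\Der\supp\mu$, and the zeros/poles of $\psi$ off $\Der\supp\mu$ are isolated, one can place disjoint open sets $\cO_1\supset F_1$, $\cO_2\supset F_2$ with $\partial\cO_j\subset\rho(T)$ and use the ordinary Riesz projection $P_{\cO_1}$; Theorem~\ref{thm-direct-sum} is not required. (Your claim that the separating broken line is ``at positive distance from $\supp\mu$'' is also not automatic: isolated points of $\supp\mu$ may sit between $K_1$ and $K_2$. One must perturb $L$ to avoid the finitely many such points it can meet; working in the resolvent set sidesteps this altogether.)
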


The proof of this result will rely on the following known fact.

\begin{thmA}[Privalov's uniqueness theorem, see \cite{Privalov-book}, Ch. IV, \S2.6]
	If a function $f(z)$, meromorphic on a domain, bounded by a rectifiable Jordan curve
	$\Ga$, has angular limit values equal to zero on a subset $E$ of $\Ga$ with
	$\cH^1(E)>0$, then $f$ is identically zero.
\end{thmA}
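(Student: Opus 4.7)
The strategy is to reduce to the spectral dissection Theorem~\ref{thm-direct-sum}. If $\cH\ominus \cH_0\neq 0$, it is already a nontrivial reducing subspace of $T$, so we may assume $\cH=\cH_0$ and $T=T_0$. It then suffices to produce an admissible Jordan curve $\gamma$ on which $\psi$ does not vanish, such that, inside some sufficiently large admissible domain $\cB\supset\si(T_0)$, $\gamma$ partitions $\bar\cB$ into two closed subregions each meeting $\si(T_0)$. The resulting direct sum $T_0=T_1\dotplus T_2$ then furnishes two nontrivial invariant subspaces.

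In Case 1, write $\Der\supp\mu=E_1\cup E_2$ as a disjoint union of two nonempty compact sets separated by an open strip $U$. Inside $U$, the measure $\mu$ is supported on at most countably many isolated points, and $\psi$ is analytic on $U\sm\supp\mu$ with isolated zeros (not identically zero, since $U\sm\supp\mu$ is connected to $\infty$ and $\psi(\infty)=1$); the union of these exceptional points in $U$ is therefore countable. By Lemma~\ref{lem-broken-line}, the axis-parallel broken lines with vertices in $\A_x+i\A_y$ form a dense family of admissible curves, so one selects such a broken line $\gamma\subset U$ that separates $E_1$ from $E_2$ and misses every exceptional point. Enclosing $\si(T_0)$ in a large admissible rectangle $\cB$ chosen in the same way completes the setup.

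In Case 2, $G\cap\Der\supp\mu=\emptyset$ forces $\supp\mu\cap G$ to be discrete and $\psi|_G$ meromorphic; moreover $\psi|_G$ is not identically zero, for otherwise $G\sm\supp\mu$ would consist entirely of eigenvalues of $T_0$, contradicting the fact that zeros of $\psi$ are isolated in $\C\sm\supp\mu$. Privalov's theorem (Theorem A), applied to $G$ with its rectifiable (Lipschitz) boundary, forces the angular boundary values of $\psi|_G$ to be nonzero at $\cH^1$-a.e.\ point of $\partial G$; the analogous conclusion from the outer side comes from Privalov applied to the adjacent component of $G^c$ (after an inversion, if needed, to bring $\infty$ inside a bounded domain). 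Combining this with condition~(D), under which $\cH^1(\alpha\cap \cG_s(\mu))\to 0$ as $s\to 0$, and with the continuity of $\psi$ on $\C\sm \cG_s(\mu)$ supplied by Lemma~\ref{L-infty-lemma}, one selects two points $p,q\in\alpha\sm \cG_s(\mu)$ at which $\psi$ does not vanish. Joining $p$ to $q$ by admissible broken lines on either side of $\partial G$, arranged to miss the countably many isolated zeros of $\psi$ and atoms of $\mu$ there, produces a Jordan curve $\gamma$ with all the required properties that crosses $\alpha$ transversally at $p$ and $q$; each of the two subarcs into which $\{p,q\}$ divide $\alpha\subset\Der\supp\mu$ lies in one of the two components of $\wh\C\sm\gamma$, so both pieces of $\si(T_0)$ are nonempty.

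The main obstacle is Case 2: simultaneously enforcing admissibility of $\gamma$, pointwise nonvanishing of $\psi$ on $\gamma$, and the geometric constraint that $\gamma$ cross $\partial G$ at points of the prescribed arc $\alpha$. Privalov's theorem controls only angular boundary limits, not pointwise values of $\psi$; the bridge is Lemma~\ref{L-infty-lemma}, which guarantees continuity of $\psi$ on $\C\sm\cG_s(\mu)$, so that on $\alpha\sm\cG_s(\mu)$ the pointwise values of $\psi$ coincide with its angular limits from either side, thereby promoting the a.e.\ nonvanishing from Privalov to the pointwise nonvanishing needed at the crossing points $p,q$.
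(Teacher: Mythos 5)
Your proposal does not prove the statement in question. The statement is Theorem A, Privalov's uniqueness theorem: a purely function-theoretic assertion that a meromorphic function on a domain bounded by a rectifiable Jordan curve $\Ga$ whose angular limits vanish on a set $E\subset\Ga$ with $\cH^1(E)>0$ must vanish identically. A proof of this would live entirely in classical boundary-value theory -- e.g.\ conformally mapping to the disk, invoking the F.~and M.~Riesz theorem to transfer positive arc length to positive harmonic measure, and applying the Luzin--Privalov uniqueness theorem (or the $\log$-integrability of boundary moduli for Nevanlinna-class functions). Nothing of this kind appears in your text. In the paper itself, Theorem A is not proved at all: it is quoted from \cite{Privalov-book} as a known classical fact.

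What you have written is instead an outline of a proof of Theorem~\ref{thm_hyperinf} (the existence of invariant subspaces), which is a different statement -- and, tellingly, your Case~2 explicitly \emph{invokes} ``Privalov's theorem (Theorem A)'' as a tool. You cannot establish a statement by an argument that uses that very statement as an ingredient; what you have produced is an application of Theorem A, not a proof of it. (As an aside, your outline of Theorem~\ref{thm_hyperinf} is broadly in the spirit of the paper's own proof of that result -- the paper handles Case~1 by Riesz projections rather than by Theorem~\ref{thm-direct-sum}, and in Case~2 it likewise uses Privalov plus Lemma~\ref{lem-broken-line} to locate a point $z_0$ on the arc with $\psi(z_0)\neq 0$ and then dissects along a small admissible rectangle -- but that is beside the point here.) To address the actual statement you would need to supply the classical uniqueness argument, or simply record that it is a cited result whose proof is external to the paper.
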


Recall also F. and M. Riesz theorem \cite{Riesz, Privalov-Saratov}:
if $\Om$ is any Jordan domain, bounded by a rectifiable curve,
then the Hausdorff measure $\cH^1$ and the harmonic measure
are mutually absolutely continuous on $\pt\Om$.

\begin{proof}[Proof of Theorem~\ref{thm_hyperinf}]
	
	We can assume (D${}'$), (B) and also that $\cH=\cH_0$.
	We start by the following
	reduction. Consider the open set
	\[
	\Om:=\C\sm\Der\supp\mu.
	\]
	Note that $\Psi$ and $\psi$ are meromorphic functions on $\Om$.
	If $\psi$ vanishes on one of its connected components,
	then, by part (2) of Proposition~\ref{prop-spectrum-T},
	$T$ has an eigenvalue, and therefore has an
	invariant subspace. So, from now on, we will assume that
	$\psi$ does not vanish identically on any of the connected components of $\Om$.
	
	\textbf{Case 1:} The (compact) set $\Der\supp\mu$ is disconnected.
	Then it decomposes into a disjoint union of two non-empty closed and relatively open subsets, say, $F_1$ and $F_2$.
	Since the zeros and the poles of $\psi$ form discrete subsets of the complement
	of $F_1\cup F_2$, one can find open disjoint sets $\cO_1\supset F_1$ and
	$\cO_2\supset F_2$, whose boundaries are finite unions of rectifiable Jordan curves,
	do not intersect with the set $F_1\cup F_2$ and do not contain neither zeros nor poles of $\psi$.
	Note that $\pt\cO_1$ and $\pt\cO_2$ are contained in $\C\sm\si(T)$, whereas
	$F_1$ and $F_2$ are contained in $\si(T)$.
	It follows that the corresponding Riesz projection
	$P_{\cO_1}$ is a non-trivial idempotent, commuting with $T$. The range of
	$P_{\cO_1}$ is a nontrivial invariant subspace of $T$.

	\textbf{Case 2:} There exists a domain $G$, meeting the hypotheses of the theorem.
	By passing to a smaller domain, we may assume that $\supp\mu\cap \pt G$
	has the form
	\[
	\ga=\{x+if(x):\; x\in J\},
	\]
	where $J\subset \R$ is a finite closed
	interval and $f$ is a Lipschitz function.
	Since $\psi$ is meromorphic in $G$ and is not identically zero in this domain,
	Privalov's uniqueness theorem implies $\psi(\la)\ne 0$ for a.e. $\la\in \ga$.
	We can also assume that $f$ is non-constant on $J$.
	
	There is a compact subset $F$ of  $f(J)\cap \A_y$ of positive measure.
	Since $f$ maps sets of zero measure to sets of zero measure
	and $f(f^{-1}(F))=F$, the preimage $f^{-1}(F)$ has positive measure.
	Hence one can choose $z_0=x_0+i y_0\in \ga$ so that
	$x_0\in J^o\cap \A_x\cap f^{-1}(\A_y)$ and $\psi(z_0)\ne 0$.
	Then $y_0\in \A_y$.

	Part 2) of Lemma~\ref{lem-broken-line} implies that there
	exist sequences $\{x_n^-\}, \{x_n^+\}$ tending to $x_0$ and
	$\{y_n^-\}, \{y_n^+\}$ tending to $y_0$ such that
	$x_n^-<x_0<x_n^+$ and $y_n^-<y_0<y_n^+$ for all $n$.
	Moreover, there is $s>0$ such that for all $n$,
	$x_n^-,x_n^+\in \R\sm\Re \cG_s(\mu)$
	and
	$y_n^-,y_n^+\in \R\sm\Im \cG_s(\mu)$.
	
	Consider a rectangle $R=[x_n^-,x_n^+]\times [y_n^-,y_n^+]$. If
	$n$ is sufficiently large, then
	$|\psi|>\de>0$ on $ R \sm \cG_s(\mu)$ and
	$\ga$ is not contained in $R$.
	
	Notice also that the boundary of $R$ is contained in $\C\sm\cG_s(\mu)$.
	Therefore $R^o$ is an admissible domain.
	
	By Theorem~\ref{thm-direct-sum}, $T$ has invariant subspaces $L$ and $M$ such that
	$\cH_0=L\dotplus M$, $\si(T|L) \subset \si(T)\cap R$ and $\si(T|M)\subset\si(T)\sm R^o$.
	Notice that $\si(T)=\si(T|L)\cup \si(T|M)$. Therefore each one of the spectra
	$\si(T|L)$ and $\si(T|M)$ contains a nontrivial subarc of $\ga$.
	Hence $L\ne 0$ and $M\ne 0$, so that $L$ is a nontrivial invariant subspace of $T$.
\end{proof}

\section{BISHOP PROPERTIES ON THE MODEL SPACE AND DECOMPOSABILITY}
\label{sec-Bishop-decomp-diss}

A landmark contribution to axiomatic spectral theory is Bishop's 1959 article \cite{B}. Inspired by generalized spectral decompositions
of linear and bounded Banach space operators lacking a spectral measure, he has identified four different behaviors
of the resolvent of the dual
which imply the existence of invariant subspaces localizing the spectrum. Soon afterwards Foia\c{s} has
isolated in 1963  the concept of decomposable operator \cite{Fo};
this class of linear operators and Bishop's properties have simplified and unified conceptually
many lines of research in spectral analysis and produced over the years far reaching applications. For an early account
of the theory we refer to \cite{CF, DS} as for recent developments, including multivariate generalizations, see \cite{EP}.

A linear and bounded operator $T$ acting on a Banach space $X$ is called {\it decomposable},
if for every finite open cover of its spectrum
\beqn
\label{open-cover}
\sigma(T) \subset \cup_{j=1}^n U_j,
\neqn
there are $T$-invariant subspaces $X_j \subset X, \ \ 1 \leq j \leq n,$ with the properties
\beqn
\label{decomp-X}
X = X_1 + X_2 + \ldots + X_n,
\neqn
and
$$
\sigma(T|_{X_j}) \subset U_j, \ \ 1 \leq j \leq n.
$$
Note that the above decomposition is not a direct sum, nor there are bounded linear projections onto its terms.
Later on it was proved that only open covers with two sets suffice for the decomposability condition.

Examples are all classes
of operators possessing a spectral measure, and beyond, for instance operators admitting a functional calculus with
smooth functions.

A decomposable operator $T \in L(X)$ has the {\it single valued extension property}: for every open set $U \subset \C$ and any vector valued analytic function $f \in {\mathcal O}(U,X)$,
$(z I - T)f(z) = 0, \ \ z \in U,$ implies $f = 0$. As soon as an operator $T \in L(X)$ has the
single valued extension property, one can speak without ambiguity about the localized spectrum
$\sigma_x(T)$ of $T$ with respect to a vector $x \in X$, defined as the smallest closed subset
of the complex plane allowing the localized resolvent $(z I - T)^{-1} x$ to have an analytic
continuation on its complement \cite{CF}.

One step further, the operator $T \in L(X)$ satisfies {\it Bishop's property $(\beta)$} if the map
$$ (z I - T) : {\mathcal O}(U,X) \longrightarrow {\mathcal O}(U,X)$$
is one to one with closed range for every open set $U \subset \C$.
Here $\cO(U,X)=\cO(U) \otimes X$ stands for
the Fr\'echet space of all analytic $X$-valued functions on $U$.
Obviously it is sufficient to check this condition on
open disks $U$.

A decomposable operator possesses property $(\beta)$ \cite{B,CF}. A more recent theorem
due to Albrecht and Eschmeier \cite{AE}
completes Bishop's visionary program by stating that
$T$ is decomposable if and only if $T$ and its dual $T^\ast$ both have property $(\beta)$. All these results have
an analog in the case of commuting tuples of operators.
In that context the analytic sheaf model
$$ {\mathcal F}(U) = {\mathcal O}(U,X)/ (z I - T) {\mathcal O}(U,X)$$
is prevalent, opening the gate to homological algebra techniques \cite{EP}.

Prompted by the spectral behavior our main theorem reveals, we propose the following
more restrictive variation of the decomposability property.
If $A\subset B\subset \C$, we denote by $\pt(A;B)$ the relative boundary of $A$ in $B$.

\begin{definition}
	Let $X$ be a Banach space.
	We will say that
	an operator $T\in L(X)$ is \emph{dissectible}
	if
	for any open cover
	\[
	\si(T) \subset \cup_{j=1}^n U_j
	\]
	there are closed sets $F_j \subset U_j$ such that
	$\si(T) = \cup_{j=1}^n F_j$, $F_j\cap F_k=\pt (F_j;\si(T))\cap \pt (F_k;\si(T))$
	for all $j\ne k$, and there are $T$-invariant
	subspaces $X_j \subset X, \ \ 1 \leq j \leq n,$ such that
	\[
	\sigma(T|_{X_j}) \subset F_j, \ \ 1 \leq j \leq n
	\]
	and a direct sum decomposition holds:
	\beqn
	\label{decomp-X2}
	X = X_1 \dotplus  X_2 \dotplus  \ldots \dotplus X_n.
	\neqn
\end{definition}

It is clear that this notion is stronger than decomposability.
Any normal operator and any compact operator are dissectible.
On the other hand, plenty of decomposable operators are not dissectible.

To fix ideas we discuss a simple case.
Let  $\Y\subset \C$ be any connected compact set,
which has more than one point.
Let
$X = C(\Y)$ be the space of continuous functions on $\Y$.
The multiplication operator $Tf(z)=zf(z)$ is decomposable but not dissectible.

Indeed,
it is easy to see that  $\si(T)=\Y$.
Take any open cover $\Y\subset U_1\cup U_2$ of $\si(T)$ such that neither $U_1$ nor $U_2$ covers $\Y$.
Notice that if $F_1$, $F_2$ correspond to this open cover, then
$f|\pt(F_j; \Y)=0$ for all $f$ in $X_j$. Since $\Y=F_1 \cup F_2$
is connected and $F_j\ne \emptyset$, there is a point $w$ in
$F_1\cap F_2=\pt (F_1; \Y)\cup \pt (F_2; \Y)$. By~\eqref{decomp-X2},
any function in $X$ vanishes at $w$, a contradiction.

To increase generality, take now $X$ to be a continuously embedded Banach space into $C(\Y)$, so that
$T=M_z$ on $X$ is decomposable and $\sigma(T) = \Y$
(such as a Sobolev space). The argument above adapts and
implies that $T$ is not dissectible.

We observe that Theorem \ref{thm-direct-sum} permits us to prove the following fact.

\begin{theorem}
	\label{corr-thm-direct-sums}
	Assume that conditions (D) and (K) are satisfied.
	If, moreover, for any $s>0$,
	the
	one-dimensional Hausdorff measure of the
	set
	\[
	\{z\in \C\sm\cG_s(\mu): \psi(z)=0\}
	\]
	equals to zero, then $T$ is dissectible.
\end{theorem}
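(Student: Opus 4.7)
I would first reduce to $T_0 := T|\cH_0$: the complementary summand $T|\cH \ominus \cH_0$ is normal and hence dissectible via its spectral projections, so dissectibility of $T_0$ entails that of $T$. As indicated after condition (B), the hypotheses (D) and (K) allow us to assume the stronger (D$'$) and (B) by rescaling $\mu$.

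Fix an open cover $\sigma(T_0) \subset \bigcup_{j=1}^n U_j$ with Lebesgue number $\delta > 0$. Set
\[
Z_* := \bigcup_{n \ge 1} \bigl\{ z \in \C \sm \cG_{1/n}(\mu) : \psi(z) = 0 \bigr\}.
\]
By the standing hypothesis each set in the union has vanishing $\cH^1$-measure, hence so does $Z_*$; since coordinate projections are $1$-Lipschitz, $\Re Z_*$ and $\Im Z_*$ are Lebesgue null in $\R$. Combining with Lemma~\ref{lem-broken-line}, the sets
\[
S_x := \A_x \sm \Re Z_*, \qquad S_y := \A_y \sm \Im Z_*
\]
have full Lebesgue measure in $\R$, and in particular are dense.

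My plan is to invoke Theorem~\ref{thm-direct-sum} on a rectangular partition built from a finite grid with $x$-coordinates in $S_x$ and $y$-coordinates in $S_y$, chosen densely enough that each cell $\Om_\alpha$ whose closure meets $\sigma(T_0)$ satisfies $\bar\Om_\alpha \subset U_{j(\alpha)}$ for some $j(\alpha)$. For each chosen $x_0 \in S_x$, part~2 of Lemma~\ref{lem-broken-line} supplies some $s(x_0) > 0$ with $\{x_0\} \times \R \subset \C \sm \cG_{s(x_0)}(\mu)$; likewise for the $y$-coordinates. Taking the minimum $s > 0$ over the finite collection of coordinates, the whole boundary skeleton $\bigcup_\alpha \pt \Om_\alpha$ lies in $\C \sm \cG_s(\mu)$, is disjoint from $Z_*$, and so by Lemma~\ref{L-infty-lemma} carries a continuous nonvanishing $\psi$. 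The cells are admissible rectangles by part~1 of Lemma~\ref{lem-broken-line}, and Theorem~\ref{thm-direct-sum} produces a decomposition $\cH_0 = \dotplus_\alpha \cH_0^{(\alpha)}$ with $T_0 = \dotplus_\alpha T_\alpha$ and $\sigma(T_\alpha) \subset \bar\Om_\alpha \subset U_{j(\alpha)}$. Grouping indices by $j(\alpha)$ gives $X_j := \dotplus_{j(\alpha)=j} \cH_0^{(\alpha)}$; setting $V_j := \bigcup_{j(\alpha)=j} \Om_\alpha$ and $F_j := \sigma(T_0) \cap \overline{V_j}$, one checks $F_j \subset U_j$, $\cH_0 = X_1 \dotplus \cdots \dotplus X_n$, $\sigma(T_0|X_j) \subset F_j$, and $\sigma(T_0) = \bigcup_j F_j$. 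The relative-boundary identity is routine: since the $V_j$ are pairwise disjoint open subsets of $\C$, every $z \in F_j \cap F_k$ lies on a shared grid edge and is approached within $\sigma(T_0)$ from both $V_j$ and $V_k$, placing it in the relative boundary of each $F_\ell$ to which it belongs.

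The hard part, and the precise content of the $\cH^1$-hypothesis, is arranging that $\psi$ be nonvanishing on the boundary skeleton while preserving admissibility. The Lipschitz-projection step above is the whole point: it converts the ambient $\cH^1$-null obstruction $Z_*$ into Lebesgue-null subsets of $\R$, which can be avoided simultaneously with the already measure-zero complements of $\A_x$ and $\A_y$. If $\psi$ were allowed to vanish on a set of positive $\cH^1$-measure within $\C \sm \bigcap_s \cG_s(\mu)$, a positive-measure set of grid coordinates would be forced through $\psi$-zeros and Theorem~\ref{thm-direct-sum} could not be invoked.
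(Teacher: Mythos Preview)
Your overall architecture---reduce to $T_0$, avoid the $\cH^1$-null zero set of $\psi$ via coordinate projections, build an admissible rectangular grid, and invoke Theorem~\ref{thm-direct-sum}---matches the paper and is sound. The gap is in your treatment of the relative-boundary identity $F_j\cap F_k=\pt(F_j;\si(T_0))\cap\pt(F_k;\si(T_0))$, which you call ``routine.'' Your claim that each $z\in F_j\cap F_k$ ``is approached within $\sigma(T_0)$ from both $V_j$ and $V_k$'' is exactly what can fail. Take $z$ on the interior of a vertical grid edge $\{\Re w=x_0\}$ separating a cell in $V_j$ from one in $V_k$: then $z\in\overline{V_j}\cap\overline{V_k}$, so $z\in F_j\cap F_k$, but nothing you have arranged forces $\sigma(T_0)$ to contain points with $\Re w>x_0$ near $z$. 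If $\sigma(T_0)$ locally sits entirely in $\overline{V_j}$, then $z$ is \emph{interior} to $F_j$ in the relative topology of $\sigma(T_0)$, and the identity fails. Grid vertices are worse: four cells meet, and $\sigma(T_0)$ may be a limit of points from only one diagonal pair.

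The paper addresses this with two additional ingredients you are missing. First, a short lemma shows that the set of points of $\sigma(T)$ that are \emph{not} accessible from the right (respectively left, above, below) within $\sigma(T)$ lies on countably many vertical (respectively horizontal) lines; removing these null sets from $\A_x$ and $\A_y$ guarantees that every point of $\sigma(T)$ on a grid edge is approached from both sides within $\sigma(T)$, handling the edge case. Second, vertices $p$ that are limit points of $\sigma(T)\cap\wt R_m$ for only one index $m$ (``special vertices'') are eliminated by adjoining a small admissible rectangle around $p$ to $\wt R_{m}$ and excising it from the others; this modification ensures every remaining vertex in $\sigma(T)$ is approached from at least two distinct $R_m$'s, which is what drives the relative-boundary check at corners. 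Without these two steps the dissectibility conclusion does not follow.
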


We recall that, by Lemma~\ref{L-infty-lemma},
$\psi$ is continuous on $\C\sm\cG_s(\mu)$ for any $s>0$;
the sets $\cG_s(\mu)$ have been defined in~\eqref{eq3n}.

In the proof, we use the following notation.
Given some $z=x+iy$ and some radius $r>0$, we set
\[
\Bxm(z,r)=\{w\in B(z,r): \Re w < x \},
\quad
\Bxp(z,r)=\{w\in B(z,r): \Re w> x \};
\]
to be the left and the right half of the disc $B(z,r)$.
These two sets are open.

Let $\Y\subset\C$, and let $z\in \Y$. We will say that
\textit{$z$ is $r$-accessible from the right in $\Y$} if
the intersection $\Y\cap \Bxp(z,r)$ is not empty.

We say that a point $z$ \textit{is accessible from the right in $\Y$} if it
$r$-accessible from the right in $\Y$ for any positive $r$. Equivalently,
there should exist a sequence $\{w_k\}$ of points of $\Y$ that tends to $z$ and
satisfies $\Re w_k>\Re z$ for all $k$.

Note that $z$ is inaccessible from the right
whenever it not $r$-accessible from the right for some $r>0$.
We define similarly the accessibility from the left, from above and from below.

\begin{lemma}
	For any bounded subset $\Y$ of the complex plane, the set of its
	points, inaccessible from the right in $\Y$, is contained in a countable
	union of vertical (straight) lines.
\end{lemma}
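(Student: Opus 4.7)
The plan is to reduce the claim to a discrete separation argument. First I would stratify the inaccessibility condition by rational radius: for each $q \in \mathbb{Q}_{>0}$ set
\[
E_q := \{z \in \Y : \Y \cap B_{\Re z +}(z,q) = \emptyset\},
\]
so that the set of points inaccessible from the right equals $\bigcup_{q \in \mathbb{Q}_{>0}} E_q$. It will then suffice to show that for every fixed $q$, the projection $P_q := \{\Re z : z \in E_q\} \subset \R$ is at most countable; once this is done, the inaccessible set lies in the union of the countably many vertical lines $\{w : \Re w = x\}$ with $x \in \bigcup_q P_q$.

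Next I would establish the key separation property: if $z_1, z_2 \in E_q$ satisfy $\Re z_1 < \Re z_2$, then $|z_1 - z_2| \geq q$. Indeed, otherwise $z_2 \in \Y$ would lie in $B_{\Re z_1 +}(z_1, q)$, contradicting the definition of $E_q$. Consequently, any two distinct points of $E_q$ with different real parts are at Euclidean distance at least $q$.

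Given this, I would show that $P_q$ is actually finite, which is where boundedness of $\Y$ enters. Suppose for contradiction $P_q$ had an accumulation point $x^\ast$; choose distinct $x_k \in P_q$ converging monotonically to $x^\ast$ and lift each to $z_k \in E_q$ with $\Re z_k = x_k$. Once $k,\ell$ are large enough that $|x_k - x_\ell| < q/2$, the separation property forces $|\Im z_k - \Im z_\ell| \geq q\sqrt{3}/2$. Since $\Y$ is bounded, the imaginary parts $\Im z_k$ all lie in a fixed bounded interval, which can contain only finitely many points pairwise separated by $q\sqrt{3}/2$; this contradicts the presence of infinitely many $x_k$ within $q/4$ of $x^\ast$.

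The main obstacle --- if one can call it that --- is choosing the right parametrization by rational radii so that the separation constant $q$ is uniform within each stratum. Once this reduction is in place the geometry is essentially Euclidean: inside each $E_q$, points with distinct real parts form a $q$-separated set, and boundedness of $\Y$ forces finiteness. Taking the union of the finite sets $P_q$ over $q \in \mathbb{Q}_{>0}$ produces a countable set of $x$-coordinates, and the corresponding vertical lines cover the entire inaccessible-from-the-right locus.
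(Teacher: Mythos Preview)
Your proof is correct and follows essentially the same approach as the paper's: both stratify by a countable family of radii, establish the same key separation property (two points in a fixed stratum with distinct real parts must be at distance at least the radius), and then use boundedness of $\Y$ to conclude that only finitely many real parts can occur in each stratum. The only cosmetic difference is that the paper packages the finiteness step as an area/packing count (giving an explicit bound on the number of vertical lines), whereas you argue by contradiction via an accumulation point and a pigeonhole on imaginary parts.
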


\begin{proof}
	A point of $\Y$ is inaccessible from the right (in $\Y$) if and only if
	it is $1/k$-inaccessible from the right
	for some $k\in\N$. So it will be enough to check that, say,
	for any $r\in (0,2)$, the set of points of $\Y$, $r$-inaccessible from
	the right, can be covered by finitely many vertical lines.
	
	Fix some $r\in (0,2)$ and assume that $\Y\subset B(z_0,R)$ for some $z_0$
	and some $R>0$. We will prove that the above set of points can be covered
	by no more than $[4(R+1)^2/r^2]$ vertical lines, where
	$[t]$ stands for the entire part of $t$.
	
	Suppose it is false. Then there exist $N>4(R+1)^2/r^2$
	points $z_j\in \Y$  that are $r$-inaccessible from the right
	and all have distinct real parts. By comparing areas, we get that
	for some $k\ne \ell$, the discs $B(z_k,r/2)$ and $B(z_\ell,r/2)$
	have to intersect. Hence $|z_k-z_\ell|<r$. Let, for instance,
	$\Re z_k<\Re z_\ell$. Then $z_\ell\in \Bxp(z_k,r)\cap \Y$.
	Therefore $z_k$ is $r$-accessible from the right, a contradiction.
	This finishes the proof.
\end{proof}

\begin{proof}[Proof of Theorem~\ref{corr-thm-direct-sums}]
	By applying last Lemma four times to the set $\Y=\si(T)$,
	we find that in Lemma~\ref{lem-broken-line} the sets $\A_x, \A_y\subset \R$
	can be chosen so that they satisfy the following additional properties:
	
	\begin{enumerate}
		\item[(P1)]  Any point $z\in \si(T)$ such that $\Re z\in \A_x$ is accessible
		from the right and from the left in $\si(T)$;
		\item[(P2)]  Any point $z\in \si(T)$ such that $\Im z\in \A_y$ is accessible
		from above and from below in $\si(T)$.
		\item[(P3)]  The lines $\{\Re z=x_0\}$, where $x_0\in \A_x$, and
		$\{\Im z=y_0\}$, where $y_0\in \A_y$, are contained in $\{z\in \cG_s: \psi(z)\ne 0\}$
		for some positive $s$.
	\end{enumerate}
	
	We also assume that whenever $a$ is an isolated point
	of $\si(T)$, $\Re a\notin \A_x$ and $\Im a\notin \A_y$.
	This is achieved by quitting some countable subsets from
	$\A_x$ and $\A_y$, once again.
	
	Suppose an open cover \eqref{open-cover}
	of $\si(T)$ is given.
	Draw finitely many vertical lines $\{\Re z= x_j\}$ ($1\le x_j\le N$), where
	$x_j\in \A_x$, and finitely many horizontal lines $\{\Im z= y_j\}$ ($1\le y_j\le M$),
	where $y_j\in \A_y$. Let us assume that
	the finite sequences $\{x_j\}$, $\{y_j\}$ are increasing and that
	the open rectangle $(x_1,x_N)\times (y_1,y_M)$
	contains $\si(T)$.
	Put $R_{jk}=[x_j,x_{j+1}]\times [y_k,y_{k+1}]$.
	By Lebesgue Lemma, we can also assume that
	the lines that were drawn are so close to each other that
	for each pair $(j,k)$, there is an
	index $\hat m(j,k)$ such that $R_{jk}\subset U_{\hat m(j,k)}$.
	Fix these numbers $\hat m(j,k)$, and set
	\[
	\wt R_m=\cup \{R_{jk}: \; \hat m(j,k)=m\}.
	\]
	Then $\wt R_m$ have disjoint interiors and $\wt R_m\subset U_m$.
	
	The desired sets $F_m$ will be defined as
	\beqn
	\label{Fm}
	F_m=\si(T)\cap R_m,
	\neqn
	where $R_m$ are certain modifications of $\wt R_m$. These
	modifications are performed as follows.
	By a vertex, we mean a point of the form
	$(x_j, y_k)$, which is on the boundary of one of polygonal sets
	$\wt R_m$. We say that a vertex
	$p$ is special if it is a limit point of $\si(T)\cap \wt R_m$ only
	for one index $m=m(p)$.  (For any vertex $p\in \si(T)$, such index $m$ should exist.)
	
	For any special vertex $p$, choose a small closed rectangle $\rho=\rho(p)$,
	whose vertices lie on $\A_x+i\A_y$, that contains $p$ in its interior and
	does not touch any drawn line, except those
	that pass through $p$. We also require that
	$\rho\subset U_{m(p)}$ and that $\rho\cap \si(T)=\wt R_{m(p)}\cap \si(T)$.
	
	Notice that $p$ cannot be a limit point of $\si(T)\cap \pt \wt R_m$
	(otherwise, as it is easy to see from (P1) and (P2) above, $p$
	would be also a limit point
	of $\si(T)\cap \wt R_t$ for some $t\ne m$). Therefore
	$\rho$ can be chosen so that
	\beqn
	\label{rho}
	\rho\cap (\si(T)\sm\{p\})=\wt R_m^o \cap\rho\cap (\si(T)\sm\{p\}).
	\neqn
	
	We make replacements
	\[
	\wt R_{m(p)}\mapsto \wt R_{m(p)}\cup \rho(p)
	\]
	and
	\[
	\wt R_{t}\mapsto \wt R_{t}\sm \rho(p)^o
	\]
	whenever $t\ne m$ and $p$ lies on the boundary of $\wt R_t$.
	After making these replacements for all special vertex points $p$, we obtain
	modified sets $R_m$, such that special vertex points are no longer their vertices.
	(We may assume that the rectangles $\rho(p)$, corresponding to different
	special vertex points $p$, are disjoint, so that the result does not depend on the order
	of these replacements.)
	Observe that $R_m$ have disjoint interiors and $R_m\subset U_m$.
	
	Define the sets $F_m$ by~\eqref{Fm}.
	By~\eqref{rho}, all vertices of the sets $R_m$
	that were not among the vertices of $\wt R_m$ do not
	belong to $\si(T)$.

	By (P3) and Theorem~\ref{thm-direct-sum}, there is a decomposition
	$T=T_1\dotplus \dots \dotplus T_N$, where $\si(T_m)\subset F_m$.
	
	It remains to check the condition concerning the boundaries of $F_m$.
	Let $w\in F_m\cap F_t$, with $m\ne t$.
	Then $w$ belongs to the boundaries of $R_m$ and of $R_t$.
	If $w$ is  not a vertex point, then it follows from
	(P1) and (P2) that $w$ is a limit of points
	in $\si(T)\sm F_m$, and so $w\in \pt(F_m; \si(T))$. Similarly,
	$w\in \pt(F_t;\si(T))$.
	
	Finally, let $w$ be a vertex point.
	Since $w\in \si(T)$, it is not a special point.
	Therefore there are indices $k\ne \ell$
	such that $w$ is a limit point of both sets
	$F_k$ and $F_\ell$. Either $k\ne m$ or $\ell\ne m$; assume
	for instance that $k\ne m$. Then $w$ is a limit point
	of the set $\si(T)\cap R_k^o$, which
	does not intersect $F_m$, and therefore
	$w\in \pt(F_m; \si(T))$. Similarly,
	$w\in \pt(F_t; \si(T))$.
	
	In conclusion,
	$F_m\cap F_t=\pt(F_m; \si(T))\cap \pt(F_t; \si(T))$.
\end{proof}

It turns out that a sufficient condition for decomposability of a totally different nature
can be proved.

Indeed, define
\[
\tilde \psi(z)=\|\Psi(z)^{-1}\|^{-1}
\]
(the right hand part is understood as zero if the matrix $\Psi(z)$ is not invertible).
Notice that for a rank one perturbation, when $m=1$, $\tilde \psi=\psi=\Psi$. We also remark that $|\tpsi(z)|\le
\|\Psi(z)\|$ for a.e. $z$, so that $\tilde \psi$ is a nonnegative locally $L^1$ function.
The set of zeros of $\psi$ and of $\tpsi$ on each set $\C\sm\cG_s(\mu)$ coincide.

\begin{definition}
	We say that  $\tpsi$
	\emph{has no deep zeros}
	if for any bounded domain $\cD$ and any its compact subset $K$, there is a constant
	$C(\cD, K, \tpsi)$ such that the estimate
	\beqn
	\label{eq-deep-zeros}
	\sup_K |f|\le C(\cD, K, \tpsi) \int_{\cD} |\tilde \psi| |f|
	\neqn
	holds for any function $f$, holomorphic in $\cD$.
\end{definition}

\begin{theorem}
	\label{thm-beta-decomp}
	If the function $\tilde \psi$ has no deep zeros, then $T$ has property~$(\beta)$.
\end{theorem}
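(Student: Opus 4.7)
The approach is to transport property $(\beta)$ for $T$ to the quotient multiplication operator $M_\cB$ on $\cQ(\cB)$ via the functional model of Theorem~\ref{thm-model}, and then extract a closed-range estimate from the no-deep-zeros hypothesis on $\tpsi$.

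First, because $T$ decomposes as the orthogonal direct sum $T_0\oplus (N|\cH\ominus\cH_0)$ with the second summand normal (hence decomposable and so possessing $(\beta)$), and since property $(\beta)$ passes to direct sums, it suffices to verify $(\beta)$ for $T_0=T|\cH_0$. By Theorem~\ref{thm-model}, $T_0$ is similar to $M_\cB$ on $\cQ(\cB)=\Mod(\cB)/(\Psi\cdot(E^2(\cB)\otimes\C^m))$, so I will work with $M_\cB$. Injectivity of $(z-M_\cB)$ on $\cO(D,\cQ(\cB))$ for any open disk $D$ (i.e., SVEP) follows from Proposition~\ref{prop-spectrum-T} together with analytic continuation of localized resolvents outside $\supp\mu$, so the crux is to show closed range.

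Given $\hat w_n\in\cO(D,\cQ(\cB))$ with $(z-M_\cB)\hat w_n\to\hat g$ in $\cO(D,\cQ(\cB))$, I lift to analytic representatives $w_n,\tilde g\in\cO(D,\Mod(\cB))$; this is legitimate because $\Psi\cdot(E^2(\cB)\otimes\C^m)$ is closed in $\Mod(\cB)$ by Proposition~\ref{prop-closed-ss}. The hypothesis then becomes the identity
\[
(z-\zeta)w_n(z,\zeta)=\tilde g(z,\zeta)+\Psi(\zeta)a_n(z,\zeta)+\eps_n(z,\zeta)
\]
in $\Mod(\cB)$, for suitable $a_n\in\cO(D,E^2(\cB)\otimes\C^m)$ and $\eps_n\to 0$ in $\cO(D,\Mod(\cB))$. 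Taking pairwise differences gives $\Psi(\zeta)(a_n-a_m)=(z-\zeta)(w_n-w_m)-(\eps_n-\eps_m)$; combined with the pointwise matrix inequality $\tpsi(\zeta)\,|v|\le|\Psi(\zeta)v|$ (a direct consequence of $\tpsi=\|\Psi^{-1}\|^{-1}$), this yields, for any bounded subdomain $V\subset\cB$,
\[
\int_V|\tpsi|\cdot|a_n-a_m|\,dA\;\le\;\int_V\bigl|(z-\zeta)(w_n-w_m)-(\eps_n-\eps_m)\bigr|\,dA.
\]
After adjusting the lifts $w_n$ by analytic sections of $\Psi\cdot(E^2(\cB)\otimes\C^m)$ --- which does not alter the cosets $\hat w_n$ --- one can force the right-hand side to tend to zero as $n,m\to\infty$, uniformly in $z$ on compact subsets of $D$. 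The no-deep-zeros condition~\eqref{eq-deep-zeros} then converts this weighted $L^1$ control into $\sup_K|a_n-a_m|\to 0$ on compacta $K\subset V$; standard $E^2$-regularity (using Proposition~\ref{prop-traces} and the Carleson embedding $J_\cB$) upgrades this to convergence of $a_n$ in the $E^2$-norm. The adjusted $w_n$ then converge in $\cO(D,\Mod(\cB))$, so $\hat w_n\to\hat w$ in $\cO(D,\cQ(\cB))$ with $(z-M_\cB)\hat w=\hat g$, establishing $(\beta)$.

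The principal obstacle is the adjustment step: choosing the analytic corrections in $\Psi\cdot(E^2(\cB)\otimes\C^m)$ so that the right-hand side of the last displayed inequality genuinely tends to zero, uniformly in the parameter $z\in D$, while remaining within a framework where~\eqref{eq-deep-zeros} applies with constants controlled in $z$. Heuristically, the no-deep-zeros hypothesis is the correct analytic input because setting $\zeta=z$ in the identity (on the dense subset where $\Psi$ is continuous and invertible) gives the diagonal pointwise information $\Psi(z)a_n(z,z)+\tilde g(z,z)\to 0$; the estimate~\eqref{eq-deep-zeros} is precisely what promotes this diagonal control to global $E^2$-convergence of $a_n(z,\zeta)$ throughout $\cB$.
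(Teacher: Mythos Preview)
Your proposal takes a detour through the quotient model $\cQ(\cB)$ that the paper avoids entirely, and in doing so you land on a step (the ``adjustment'' of lifts $w_n$) that you yourself flag as unresolved. That step is a genuine gap: there is no evident mechanism for choosing analytic corrections in $\Psi\cdot(E^2(\cB)\otimes\C^m)$, depending analytically on the parameter $z\in D$, that force $(z-\zeta)(w_n-w_m)$ to be small in a weighted $L^1$ sense over $V\subset\cB$. The correction you would need is essentially the solution of a parametrized Toeplitz equation with symbol $\Psi$, and controlling that solution is exactly the difficulty you are trying to bypass. The subsequent upgrade from pointwise convergence of $a_n$ on compacta to $E^2(\cB)$-convergence is also not justified; nothing in the no-deep-zeros hypothesis or in Proposition~\ref{prop-traces} provides a bound on $\|a_n\|_{E^2(\cB)}$ in terms of $\sup_K|a_n|$.

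The paper's route is both shorter and sidesteps these issues. It never passes to $\cQ(\cB)$. Instead it works directly with $T=N+u^t\langle\cdot,v\rangle$: given $(T-\la)x_n(\la)\to 0$ in $\cO(\cD,\cH_0)$, one rewrites this as $(N-\la)x_n(\la)+u^t\langle x_n(\la),v\rangle\to 0$. Since $N$ is normal and already has $(\beta)$, it suffices to show that the scalar ($\C^m$-valued) sequence $a_n(\la):=\langle x_n(\la),v\rangle$ tends to $0$ in $\cO(\cD,\C^m)$. Applying $W_0$ and the intertwining relation~\eqref{intertw-W0} gives $(z-\la)f_n(\la,z)+\Psi(z)a_n(\la)\to 0$ in $\cO(\cD,\Mod(\wh\C))$. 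The restriction-to-diagonal Lemma~\ref{lem-restr-to-diag} then kills the first term, yielding $\Psi(z)a_n(z)\to 0$ in $\Mod(\cD_0)$ for an admissible $\cD_0\Subset\cD$; since $\Mod(\cD_0)\hookrightarrow L^1(\cD_0,dA)$ and $|\Psi a_n|\ge\tpsi\,|a_n|$, the no-deep-zeros estimate~\eqref{eq-deep-zeros} gives $\sup_K|a_n|\to 0$ directly. Your final heuristic paragraph (``setting $\zeta=z$'') is precisely this idea, but it should be the \emph{main} argument, not a heuristic afterthought --- and it requires the target $a_n$ to be $\C^m$-valued analytic functions, not the $E^2(\cB)$-valued objects your quotient-model formulation produces.
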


In virtue of a result proved by Domar \cite{Do}, we derive a
more tangible criterion.

First we define an auxiliary function $F^*$. Choose a disc $B(0,R)$, containing the spectrum of $T$, and let
$F^*$ be the decreasing rearrangement of $\log\log(e+\tilde \psi^{-1})|B(0,R)$.
That is, $F^*$ is a decreasing non-negative function on $[0, \pi R^2]$ such that, for any
$s>0$, the length of the interval $\{t: F^*(t)\ge s\}$ is equal to the area measure of the set
$\{z\in B(0,R): \log\log(e+\tilde \psi(z)^{-1})\ge s\}$.

\begin{theorem}
	\label{thm-beta-decomp-domar}
	As usual, assume (D${}'$), (B) and (K).
	If $\Psi$ is a H\"older-$\alpha$ function for some $\alpha\in (0,1]$ and
	\beqn
	\label{eq-cond-domar}
	\int_0^\eps (t^{-1} F^*(t))^{1/2}<\infty
	\neqn
	for some positive $\eps$, then $T$ has property $(\beta)$.
\end{theorem}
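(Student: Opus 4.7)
The plan is to combine Theorem~\ref{thm-beta-decomp} with Domar's theorem~\cite{Do}. By Theorem~\ref{thm-beta-decomp}, it suffices to show that $\tpsi$ has no deep zeros in the sense of~\eqref{eq-deep-zeros}. First I would observe that $\tpsi$ inherits the H\"older-$\al$ regularity of $\Psi$: the smallest singular value of a matrix is $1$-Lipschitz with respect to the operator norm (Weyl's inequality), so there is a constant $C_0$ with $|\tpsi(z)-\tpsi(w)|\le C_0|z-w|^\al$ on any fixed compact set.

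Next, fix a bounded domain $\cD\subset\C$ and a compact $K\subset\cD$; choose an intermediate domain $\cD'$ with $K\subset\cD'\Subset\cD$. For any $f$ holomorphic on $\cD$, set $\Lambda:=\int_\cD|\tpsi||f|\,dA$. For $z_0\in\cD'$ with $\tpsi(z_0)$ sufficiently small, let $r_0:=(\tpsi(z_0)/(2C_0))^{1/\al}$, so that $B(z_0,r_0)\subset\cD$ and $\tpsi\ge\tpsi(z_0)/2$ on $B(z_0,r_0)$. The subharmonicity of $|f|$ then gives
\beqn
\label{plan-domar-ptwise}
|f(z_0)|\le\frac{1}{\pi r_0^2}\int_{B(z_0,r_0)}|f|\,dA\le\frac{2}{\pi r_0^2\,\tpsi(z_0)}\int_\cD|\tpsi||f|\,dA\le\frac{C_2\,\Lambda}{\tpsi(z_0)^{1+2/\al}},
\neqn
with $C_2$ depending only on $C_0$ and $\al$; when $\tpsi(z_0)$ is not small, the same argument with $r_0$ replaced by $\dist(K,\pt\cD')$ gives a bound $|f(z_0)|\le C_3\Lambda$. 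Equivalently, the subharmonic function $u:=\log|f/\Lambda|$ on $\cD$ is dominated on $\cD'$ by $M_0(z):=C_4+(1+2/\al)\log^+(1/\tpsi(z))$.

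Then I would invoke Domar's theorem~\cite{Do}, which asserts that a subharmonic function on a domain, pointwise dominated by a measurable function whose decreasing rearrangement $G^*$ of $\log^+\log^+$ of the majorant satisfies $\int_0^\eps(t^{-1}G^*(t))^{1/2}\,dt<\infty$, is locally bounded above. Since $\log^+\log^+e^{M_0}=\log^+M_0$ differs from $\log\log(e+\tpsi^{-1})$ only by a bounded additive constant (the factor $1+2/\al$ inside the outer logarithm produces only an additive shift), the decreasing rearrangements on $\cD'\subset B(0,R)$ of these two functions are comparable up to a bounded additive constant. Hence hypothesis~\eqref{eq-cond-domar} implies Domar's integrability condition for $M_0$, and his theorem yields $\sup_K u\le C(\cD,K,\tpsi)$, equivalently $\sup_K|f|\le e^{C(\cD,K,\tpsi)}\int_\cD|\tpsi||f|\,dA$. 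This is precisely~\eqref{eq-deep-zeros}, and property $(\beta)$ for $T$ follows from Theorem~\ref{thm-beta-decomp}.

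The step I expect to be the main obstacle is the bookkeeping around the intermediate domain $\cD'$: the H\"older pointwise estimate~\eqref{plan-domar-ptwise} is available only at points $z_0$ at positive distance from $\pt\cD$, while Domar's theorem needs the majorization on a domain carrying the subharmonic function $u$. This is resolved by choosing $\cD'$ first and applying Domar's theorem on $\cD'$, with $K\subset\cD'$ still compact; all constants then depend on $\dist(K,\pt\cD)$, on the H\"older constant of $\Psi$, and on the ambient function $F^*$. A secondary, purely bookkeeping, subtlety is to verify that $\log^+M_0$ and $\log\log(e+\tpsi^{-1})$ are equidistributed up to a bounded shift, so that the integrability condition~\eqref{eq-cond-domar} written in terms of $F^*$ on $B(0,R)$ indeed translates into the integrability condition Domar requires.
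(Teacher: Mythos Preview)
Your proposal is correct and follows essentially the same route as the paper: reduce to Theorem~\ref{thm-beta-decomp}, deduce H\"older-$\alpha$ continuity of $\tpsi$ from that of $\Psi$, use the sub-mean-value inequality on discs of radius $\asymp\tpsi(z)^{1/\alpha}$ to obtain $|f(z)|\lesssim\tpsi(z)^{-1-2/\alpha}\int_\cD|\tpsi||f|\,dA$, and finish with Domar's theorem. The only cosmetic differences are that the paper derives the H\"older bound for $\tpsi$ from the identity $B^{-1}-A^{-1}=A^{-1}(A-B)B^{-1}$ rather than Weyl's inequality, and handles your anticipated ``main obstacle'' via an intermediate compact set $K_0$ with $K\subset K_0^o\subset K_0\subset\cD$ and the single radius function $r_0(z)=\min\big(\dist(K_0,\pt\cD),\,(2C_0)^{-1/\alpha}\tpsi(z)^{1/\alpha}\big)$.
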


Theorem \ref{thm-beta-decomp-domar} has a simple corollary concerning decomposability.

\begin{corollary}
	\label{cor-decomp}
	Suppose the conditions (D${}'$) and (B). Suppose that for $\mu$-a.e. $z$, the linear span of the
	vectors $u_1(z), \dots, u_m(z)$ coincides with the linear span of
	the vectors $v_1(z), \dots, v_m(z)$.
	If $\Psi$ is a H\"older-$\alpha$ function for some $\alpha\in (0,1]$ and~\eqref{eq-cond-domar}
	holds, then $T$ is decomposable.
\end{corollary}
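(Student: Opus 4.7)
The plan is to apply the Albrecht--Eschmeier theorem: $T$ is decomposable iff both $T$ and $T^*$ have Bishop's property~$(\beta)$. Theorem~\ref{thm-beta-decomp-domar} already gives property $(\beta)$ for $T$, so the task reduces to establishing the same property for $T^*$. The span hypothesis of the corollary is precisely what is needed to bring $T^*$ into our framework.

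First I would recast $T^*$ in the same functional picture. Writing
\[
T^* = N^* + \sum_{j=1}^m v_j\,\langle\,\cdot\,, u_j\rangle
\]
and applying the antiunitary $L^2(\mu,\h_0)\to L^2(\bar\mu,\h_0)$, $f\mapsto \bar f$, one sees that $T^*$ is unitarily equivalent to a finite rank perturbation of multiplication by the complex variable on $L^2(\bar\mu,\h_0)$, with the roles of $u$ and $v$ interchanged. Conditions (B) and (D$'$) are symmetric in $u,v$ and invariant under complex conjugation of the spectral measure, so they continue to hold. Condition (K) for $T^*$ becomes the requirement that the vectors $u_j(z)$ generate $\h_0(z)$ for $\mu$-a.e.~$z$, which is exactly the span hypothesis of the corollary.

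Next I would compare the perturbation matrices. A direct entrywise computation from $\Psi=I_m+C(\bar v\odot u^t\cdot\mu)$ shows that the perturbation matrix of $T^*$ is obtained from $\Psi_T$ by the substitution $z\mapsto \bar z$ together with a transposition/conjugation that preserves both the matrix norm and the norm of the inverse. Consequently
\[
\tpsi_{T^*}(z) = \tpsi_T(\bar z),\qquad z\in\C,
\]
and $\Psi_{T^*}$ inherits the H\"older-$\alpha$ regularity of $\Psi_T$. The sublevel sets $\{\tpsi_{T^*}\le t\}\cap B(0,R)$ and $\{\tpsi_T\le t\}\cap B(0,R)$ have equal area measure, so the decreasing rearrangement $F^*$ appearing in Theorem~\ref{thm-beta-decomp-domar} is the same for $T$ and for $T^*$; in particular the integrability condition~\eqref{eq-cond-domar} transfers without change.

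Theorem~\ref{thm-beta-decomp-domar} therefore applies both to $T$ and to $T^*$, yielding Bishop's property~$(\beta)$ in each case, so the Albrecht--Eschmeier theorem gives the decomposability of $T$. The main delicate point is the verification of the symmetry $\tpsi_{T^*}(z)=\tpsi_T(\bar z)$, which reduces to a careful bookkeeping of $\odot$ products, bases and conjugations in the direct integral model; once this is pinned down, all remaining items are a routine transfer of the hypotheses from $T$ to $T^*$, enabled precisely by the equality of spans of $\{u_j(z)\}$ and $\{v_j(z)\}$.
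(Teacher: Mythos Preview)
Your proposal is correct and follows essentially the same route as the paper's own argument: apply Theorem~\ref{thm-beta-decomp-domar} to both $T$ and $T^*$ using the Albrecht--Eschmeier criterion, with the span hypothesis supplying condition (K) for $T^*$ and the relation $\tpsi_{T^*}(z)=\tpsi_T(\bar z)$ (from $\Psi_{T^*}(z)=\Psi_T(\bar z)^*$) transferring the H\"older regularity and the rearrangement condition~\eqref{eq-cond-domar}. Your additional remarks on the antiunitary conjugation and the equality of the sublevel-set areas are just a slightly more explicit unpacking of the same steps.
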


Indeed, one obtains the representation of $T^*$ as a perturbation of $N^*$
by passing to conjugates in~\eqref{perturb-T}. The hypotheses
on $u_j$ and $v_j$ imply that both representations
of $T$ and of $T^*$ satisfy condition (K).
Moreover, in this case the corresponding perturbation matrices $\Psi_T$ and
$\Psi_{T^*}$ are of size $m$ and satisfy $\Psi_{T^*}(z)=\Psi_T(\bar z)^*$.
(If only (K) is assumed, one has to add new ``fake'' terms to the representation of $T^*$, which
makes the size of $\Psi_{T^*}$ greater than $m$.)
Hence $\tpsi_{T^*}(z)=\tpsi_{T}(\bar z)$.
So we derive property ($\beta$) for both $T$ and $T^*$, and therefore $T$ is decomposable.

\begin{example}
	The decomposability can fail if $\psi$ is a smooth function that vanishes on an
	smooth arc $\de$ and decays very rapidly when approaching to this arc.
	This was (rather briefly) explained in~\cite{Y1} at the end of Section~6.
	We reproduce the argument here with a few more details.
	Assume that $\mu$ is absolutely continuous with respect to area measure and $m=1$.
	Assume also that $\cH=\cH_0$.
	Notice the following simple fact: if $S\in L(X)$ is a Hilbert space operator
	and $W: K\to C(\de)$ is its diagonalization, that is,
	$(WS x)(z)=z (Wx)(z)$ for all $x\in X$, then
	$(W x)|\de\sm \si(S)=0$, $x\in X$. Indeed, it follows that
	the function $\la\mapsto (\cdot-\la)^{-1}Wx\in C(\de)$ extends analytically
	from $\C\sm(\si(S)\cup \de)$ to $\C\sm\si(S)$.

	Fix a domain $\cB\supset \si(T)$.
	If $\de\subset\si(T)$ and $\psi$ and $\bar\pt\psi$ decay sufficiently
	fast when approaching $\de$, then, as explained in~\cite{Y1},
	the operator $Wf:=f|\de$ is a well-defined diagonalization operator
	from the model space $\cQ(\cB)$  to
	a space of quasianalytic functions on $\de$.
	Take now any open cover $\si(T)\subset U_1\cup U_2$ such that
	each of the sets $\de\sm U_1$ and $\de\sm U_2$ contains a  subarc of $\de$.
	If there exists the corresponding decomposition
	$L^2(\mu,H_0)=X_1+X_2$, as in ~\eqref{decomp-X},
	then $(W x)|\de\sm U_j=0$ for $x\in X_j$.
	By quasianalyticity, $(W x)|\de=0$ for all $x\in X_j$ and
	hence for all $x\in L^2(\mu,H_0)$, which is obviously false.
	This shows that $T$ cannot be decomposable.
	
\end{example}

\begin{rem}
	Notice also that Corollary~\ref{cor-decomp} ensures decomposability in many cases
	when the zero set of $\psi$ is larger than it is allowed by
	Theorem~\ref{corr-thm-direct-sums} (which assumes it to be of zero length).
	The arguments similar to those in the above example
	show that, whenever the zero set of $\psi$ on some set
	$\C\sm\cG_s(\mu)$ is connected, operator $T$ will not be
	dissectible, in general.
	
	However, Corollary~\ref{cor-decomp} says nothing if the zero set
	of $\psi$ contains an open set $U$. If $T$ is a rank one perturbation
	of $N$, then by applying the model Theorem~\ref{thm-model} and the
	diagonalization map $w\in \cQ(\cB)\mapsto w|U$ (which is well-defined, because $\Psi|U=\psi|U=0$),
	it is easy to see that $T$ is not decomposable. We do not know whether this
	fact extends to higher rank perturbations.
\end{rem}

Notice that condition~\eqref{eq-cond-domar} resembles
the well-known decomposability criterion due to
Lyubich and Macaev \cite{LM}.

Diagonalization operators as above are certainly important in the spectral study
of perturbations of normal operators and have been used intensively in~\cite{Y1}.
Some additional material can be found in unpublished preprint~\cite{Y2} by the second author,
where completeness of generalized eigenvectors of $T$ of several kinds
has been discussed. Later, the local spectral multiplicity and
completeness of ``systems of generalized eigenvectors'' have been defined
and studied in~\cite{Y3} for a general Banach space operator.

We turn now to the proofs.
We return to the functional model described in the previous sections.
Henceforth we adopt the notation
$
W_0 x= W_{0, \wh \C}\, x,
$
where $W_{0, \wh \C}$ is the transform defined in \eqref{def-W-0-Om} for the case of $\Om=\wh \C$.
We set
\[
\Mod(\wh\C)=W_0 L^2(\mu, H_0).
\]
Notice that for any $x$ in $\LtwomuH$,
\[
\langle x, v\rangle = \big(zW_0 x(z)\big)\big|_{z=\infty} .
\]
Moreover, by ~\eqref{intertw-W0-new},
\beqn
\label{intertw-W0}
W_0 T x(z) = z  W_0 T x(z) + \Psi(z)\,\langle x, v\rangle, \quad x\in \LtwomuH.
\neqn

We will use the space $\cO(\cD, \Mod(\wh\C))=\cO(\cD) \otimes \Mod(\wh\C)$.
For a function $f(\la,z)$ in this space,
$f(\la,\cdot)$ is an element of $\Mod(\wh\C)$ for any $\la\in\cD$ and
the map $\la\mapsto f(\la,\cdot)\in \Mod(\wh\C)$ is analytic.
We start by the following observation.

\begin{lemma}
	\label{lem-restr-to-diag}
	Let $\cD$ be any domain in $\C$ and let $\cD_0$ be its subdomain with admissible boundary
	such that $\overline{\cD}_0\subset\cD$. Then the restriction-to-diagonal map
	\[
	f(\la,z)\in \cO(\cD, \Mod(\wh\C))
	\mapsto f(z,z)\in \Mod(\cD_0)
	\]
	is well defined and continuous. It sends to $0$ any function of the form
	$(z-\la)f(\la,z)$, where $f(\la,z) \in \cO(\cD, \Mod(\wh\C))$.
\end{lemma}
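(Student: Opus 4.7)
The plan is to transfer the problem via the isomorphism $W_0\colon L^2(\mu,H_0)\to \Mod(\wh\C)$ to $L^2(\mu,H_0)$-valued holomorphic functions on $\cD$, and then exploit a Cauchy representation on a rectifiable Jordan contour $\Gamma\subset\cD\setminus\overline{\cD_0}$ surrounding $\overline{\cD_0}$. Concretely, every $f\in\cO(\cD,\Mod(\wh\C))$ is of the form $f(\la,z)=C(x(\la)\odot\bar v\cdot\mu)(z)$ for a unique holomorphic $x\colon\cD\to L^2(\mu,H_0)$, and Cauchy's formula yields $x(\la)=\frac{1}{2\pi i}\oint_\Gamma\frac{x(\zeta)}{\zeta-\la}\,d\zeta$ in $L^2(\mu,H_0)$ for every $\la\in\cD_0$.

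I would next introduce the \emph{diagonal density}
\[
\tilde x(w):=\frac{1}{2\pi i}\oint_\Gamma\frac{x(\zeta)(w)}{\zeta-w}\,d\zeta,\qquad w\in\cD_0,
\]
heuristically $x(w)(w)$; it lies in $L^2(\mu|_{\cD_0},H_0)$ by a routine Cauchy--Schwarz estimate using $\dist(\Gamma,\overline{\cD_0})>0$. The main computation is to substitute the Cauchy representation of $x(z)$ into $f(z,z)=\int\frac{x(z)(t)\odot\bar v(t)}{t-z}\,d\mu(t)$, apply Fubini, and invoke the partial-fraction identity $\frac{1}{(t-z)(\zeta-t)}=\frac{1}{\zeta-z}\bigl[\frac{1}{t-z}+\frac{1}{\zeta-t}\bigr]$; splitting $\mu=\mu|_{\cD_0}+\mu|_{\cD_0^c}$ and writing $f=F+G$ for the ``on-$\cD_0$'' and ``off-$\cD_0$'' Cauchy transforms of $x(\zeta)\odot\bar v\cdot\mu$, one ends up with
\[
f(z,z)=C(\tilde x\odot\bar v\cdot\mu|_{\cD_0})(z)+h(z),\qquad z\in\cD_0,
\]
where $h(z)=\frac{1}{2\pi i}\oint_\Gamma\frac{G(\zeta,z)}{\zeta-z}\,d\zeta+\frac{1}{2\pi i}\oint_\Gamma\frac{F(\zeta,\zeta)}{\zeta-z}\,d\zeta$. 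By Lemma~\ref{lem-Cauchy-mu} each $G(\zeta,\cdot)|_{\cD_0}$ belongs to $E^2(\cD_0)\otimes\C^m$ with norm uniform in $\zeta\in\Gamma$, so the first $\Gamma$-integral is a Bochner integral valued in $E^2(\cD_0)\otimes\C^m$; the second is the scalar Cauchy integral on $\Gamma$ of the continuous $\C^m$-valued function $\zeta\mapsto F(\zeta,\zeta)$, producing an $H^\infty(\cD_0)\otimes\C^m\subset E^2(\cD_0)\otimes\C^m$ function. Hence $f(z,z)\in\Mod(\cD_0)$, and the uniform bounds furnish continuity of the map.

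For the final claim I would use the algebraic identity $z-\la=(z-t)+(t-\la)$ inside the integral defining $f$, which yields
\[
(z-\la)f(\la,z)=-\langle x(\la),v\rangle+W_0\bigl[(t-\la)x(\la)\bigr](z).
\]
The first summand is a $z$-constant, $\la$-analytic, $\C^m$-valued function; extending the diagonal map linearly to such constants by $c(\la)\mapsto c(z)\in H^\infty(\cD_0)\otimes\C^m\subset\Mod(\cD_0)$, its diagonal equals $-\langle x(z),v\rangle$. For the $\Mod(\wh\C)$ summand, the diagonal density of $(t-\la)x(\la)$ vanishes since $(w-w)x(w)(w)=0$, so only the holomorphic leftover survives; substituting $t-\zeta=(t-z)+(z-\zeta)$ in the corresponding $\Gamma$-integrals and invoking Cauchy's theorem $\oint_\Gamma x(\zeta)(t)\,d\zeta=0$ (since $\zeta\mapsto x(\zeta)(t)$ is holomorphic on $\cD$), this leftover collapses to $\langle x(z),v\rangle$, exactly cancelling the constant term.

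The main obstacle I anticipate is proving $h\in E^2(\cD_0)\otimes\C^m$ with bounds uniform in $f$: a function merely holomorphic on $\cD_0$ need not be in $E^2$, so one has to combine the admissibility of $\pt\cD_0$ and the Cauchy-embedding in Lemma~\ref{lem-Cauchy-mu} with the compactness of $\Gamma$ to obtain uniform $L^2$-boundary estimates for $h$ along an exhaustion of $\cD_0$.
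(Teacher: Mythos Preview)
Your approach is essentially correct but follows a genuinely different route from the paper. The paper's proof is much shorter and more abstract: it exploits the \emph{nuclearity} of the Fr\'echet space $\cO(\cD)$. Choosing an intermediate domain $\cD_1$ with $\overline{\cD_0}\subset\cD_1\subset\overline{\cD_1}\subset\cD$, the restriction map factors through the projective tensor product $L^2_a(\cD_1)\,\hat\otimes_\pi\,\Mod(\wh\C)$, so every $f(\la,z)$ admits an absolutely convergent expansion $\sum_n f_n(\la)g_n(z)$ with $\{f_n\}$ an orthonormal basis of the Bergman space $L^2_a(\cD_1)$ and $g_n\in\Mod(\wh\C)$. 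The diagonal map is then simply $\sum_n f_n(z)g_n(z)$, and exponential decay of the norms of $f_n|_{\cD_0}$ gives convergence in $\Mod(\cD_0)$ at once. The vanishing on $(z-\la)f$ is immediate: both $zf$ and $\la f$ have the same formal diagonal $\sum_n zf_n(z)g_n(z)$.

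Your concrete Cauchy-integral computation, by contrast, never invokes nuclear spaces and produces an explicit formula for the diagonal density $\tilde x$ and the analytic remainder $h$. This is instructive and self-contained, at the price of considerably more bookkeeping. Your ``main obstacle'' is in fact mild: since $\Gamma$ lies at positive distance from $\overline{\cD_0}$, the kernel $(\zeta-z)^{-1}$ is a bounded $H^\infty(\cD_0)$ multiplier uniformly in $\zeta\in\Gamma$, so the Bochner integral $\oint_\Gamma(\zeta-z)^{-1}G(\zeta,\cdot)\,d\zeta$ lands directly in $E^2(\cD_0)\otimes\C^m$ with the right estimate; and $\zeta\mapsto F(\zeta,\zeta)$ is continuous and bounded on $\Gamma$ (because $\Gamma\cap\overline{\cD_0}=\emptyset$), so its Cauchy integral is analytic on a full neighbourhood of $\overline{\cD_0}$. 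One small caution: when you write ``$\zeta\mapsto x(\zeta)(t)$ is holomorphic'', this is only guaranteed for $\mu$-a.e.\ $t$; it is cleaner to interpret $\oint_\Gamma x(\zeta)\,d\zeta=0$ as a Bochner identity in $L^2(\mu,H_0)$ and then pass to a.e.\ values. Your extension of the diagonal map to constant-in-$z$ terms is legitimate and parallels the paper's implicit extension when it applies the diagonal to $zf(\la,z)$.
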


\begin{proof}
	Let $\cD_1$ be a domain with smooth boundary, relatively compact in $\cD$ and
	containing the closure of $\cD_0$. Due to the nuclearity of $\cO(U)$, for any open set $U$,
	the restriction map factors
	through the complete projective tensor product
	\[
	\cO(\cD) \otimes {\rm Mod}(\wh\C)  \longrightarrow L^2_a(\cD_1)
	\hat{\otimes}_\pi {\rm Mod}(\wh\C)  \longrightarrow \cO(\cD_0)\otimes {\rm Mod}(\wh\C).
	\]
	Above $L^2_a(\cD_1)$ stands for the Bergman space.
	
	Choose an orthonormal basis
	$\{f_n(\la)\}$ of $L^2_a(\cD_1)$ such that any $f\in \cO(\cD)$ expands when restricted to $\cD_1$ into a
	convergent series $\sum c_n f_n(\la)$.
	Thus, after taking restrictions to $\cD_1$ of the elements of $\cO(\cD) \otimes {\rm Mod}(\wh\C)$ we can work with
	convergent series with respect to the projective norm:
	\beqn
	f(\la,z)=\sum f_n(\la) g_n(z)
	\neqn
	where $g_n(z)\in \Mod(\wh\C)$.
	Moreover, the maps $f\in L^2_a (\cD_1, \Mod(\wh\C))\mapsto g_n|\cD_0$
	are bounded and their norms decay exponentially as $n\to\infty$. This implies
	that the above restriction-to-diagonal operator is bounded. If
	$f(\la,z)$ is expressed in a series as above, then
	the restriction-to-diagonal operator sends both functions
	$zf(\la,z)$ and $\la f(\la,z)$ to $\sum z f_n(z) g_n(z)$, which implies the last statement.
\end{proof}

\begin{proof}[Proof of Theorem~\ref{thm-beta-decomp}]
	We have to prove that for every domain $\cD$ in $\C$
	and every sequence of $\LtwomuH$-valued analytic functions $x_n(\la)$ defined for $\la \in \cD$, subject to
	\beqn
	\label{beta-xn-T}
	\lim_n  (T-\la) x_n(\la) = 0
	\neqn
	with respect to the topology of $\cO(\cD, \LtwomuH)$ satisfies
	\[
	\lim_n  x_n = 0 \quad \text{in $\cO(\cD, \LtwomuH))$.}
	\]
	Notice that \eqref{beta-xn-T} rewrites as
	\beqn
	\label{beta-xn-N}
	\lim_n \big[ (N-\la) x_n(\la) + u^t \langle x_n(\la), v\rangle \big]= 0 \quad \text{in }\cO(\cD, \LtwomuH).
	\neqn
	We already know that normal operators (and more general, all decomposable operators)
	have property $(\beta)$. Hence it is sufficient to prove that
	\beqn
	\label{goes-to-zero}
	\lim_n  \langle x_n, v \rangle = 0  \quad \text{in } \cO(\cD, \C^m).
	\neqn
	
	Set $f_n(\la, \cdot)=W_0 x_n(\la)$.
	
	By applying the isomorphism $W_0$ to~\eqref{beta-xn-T}
	and using~\eqref{intertw-W0}, one finds
	\[
	(z-\la) f_n(z,\la) + \Psi(z) \langle x_n(\la), v\rangle \to 0 \quad \text{in }\cO(\cD,\Mod(\wh\C)).
	\]
	
	Put $a_n(\la)=\langle x_n(\la), v\rangle$.
	Take any compact subset $K$ of $\cD$.
	There exists a domain $\cD_0$ with admissible boundary
	such that $K \subset \cD_0\subset \overline{\cD}_0\subset \cD$.
	By virtue of the above Lemma,
	\[
	\Psi(z) a_n(z)\to 0 \quad\text{in }\Mod(\cD_0).
	\]
	Then
	\[
	\max_{z\in K}\|a_n(z)\|
	\le
	C(\cD_0,K,\tilde\psi) \|\tilde \psi a_n\|_{L^1(\cD_0)}
	\le
	C(\cD_0,K, \tilde\psi)\| \Psi a_n\|_{L^1(\cD_0)} \to  0
	\]
	as $n\to\infty$.
	This implies $a_n\to 0$ in $C(K)\otimes \C^m$. We conclude that relation ~\eqref{goes-to-zero} holds.
\end{proof}

\begin{proof}[Proof of Theorem~\ref{thm-beta-decomp-domar}]
	We prove that the function $\tilde\psi$ has no deep zeros.
	First, observe that for any two $m\times m$ invertible matrices $A$ and $B$,
	formula $B^{-1}-A^{-1}=A^{-1}(A-B)B^{-1}$ implies
	\[
	\big|\|A^{-1}\|^{-1}-\|B^{-1}\|^{-1}\big|\le \|A-B\|.
	\]
	By passing to a limit, we extend the validity of this inequality to arbitrary
	$m\times m$ matrices $A$ and $B$. This implies $\tilde\psi$ is H\"older-$\alpha$ whenever
	the matrix-valued function $\Psi$ has the same H\"older exponent.
	
	Assume $|\tilde\psi(z)-\tilde\psi(w)|\le C_0 |z-w|^\alpha$ for all $z,w\in \C$.
	Fix a bounded domain $\cD$ and its compact subset $K$ and take an arbitrary
	function $f$, analytic in $\cD$;  we have to check~\eqref{eq-deep-zeros}.
	Since $|\tilde \psi|>\de>0$ on the complement of $B(0,R)$, we may assume that
	$\cD\subset B(0,R)$. Choose a compact set $K_0$ such that $K\subset K_0^o\subset K_0\subset \cD$.
	The distance $d_0:=\dist(K_0, \pt\cD)$ is positive. Put
	\[
	r_0(z)=\min\big(d_0, \, (2C_0)^{-1/\alpha}|\tpsi(z)|^{1/\alpha}\big).
	\]
	Let $z\in K_0$. Then any point $w\in B(z,r_0(z))$ belongs to $\cD$ and
	\[
	|\tpsi(z)-\tpsi(w)|\le C_0|z-w|^\alpha \le |\tpsi(z)|/2.
	\]
	Hence
	$|\tpsi(w)|\ge|\tpsi(z)|/2$ for all $w\in B(z,r_0(z))$. Consequently
	\begin{multline*}
	|f(z)|\le \frac 1 {\pi r_0(z)^2}\int_{B(z,r_0(z))}|f|\, dA \\
	\le
	\frac {2} {|\tpsi(z)|\,\pi r_0(z)^2}\int_{B(z,r_0(z))}|\tpsi(w) f(w)|\, dA(w)
	\le
	C |\tpsi(z)|^{-1-2/\alpha}\,\int_{\cD}|\tpsi f|\, dA.
	\end{multline*}
	Hence the subharmonic function $\log |f|$ satisfies $\log |f|\le C' \log(e+|\tpsi|^{-1})$ on $K_0$.
	By applying Theorem 1 of \cite{Do} (with $\al=n=2$), we deduce that
	condition~\eqref{eq-cond-domar} ensures the local boundedness of $f$.
\end{proof}

If
(D${}'$) holds and the measure
$\mu$ satisfies an estimate $\mu(Q)\le C|Q|^\si$ for all squares $Q$,
where $\si\in (1, 2]$ and $C$ are constants, then
$\Psi$ is a H\"older-$\al$ function for some positive $\al$.
This is shown along the lines of the last part of the proof of
Lemma~\ref{L-infty-lemma}, using the estimate~\eqref{est} with some minor modifications.
We leave the details to the reader.

\begin{rem}
	As mentioned above, the case when the ``dimension'' of $\mu$ is one
	is the most difficult for our approach. However,
	if the spectrum of $N$ lies on a smooth curve and $T-N$ is compact, belongs to the Matsaev class and
	the spectrum of $T$ does not ``fill in'' the interior of the curve, then, by a
	result by Radjabalipour and Radjavi~\cite{Radjabalipour-Radjavi},
	$T$ is decomposable. This fails for larger compact perturbations,
	see~\cite{Herrero}. We also refer to~\cite{Miller-Miller-Neumann} for a study of the property
	$(\be)$ and decomposability of unilateral and bilateral weighted shifts; the later
	can be viewed as perturbations of the unweighted bilateral shift, which is unitary.
	We also can mention that functional models for perturbations of normal operators
	with spectrum on a straight line or on a curve have been devised
	by Naboko~\cite{Naboko} and by Tikhonov~\cite{Tikh}.
\end{rem}

\section{FINAL REMARKS}

The present article leaves open a few strings.

\begin{question}
	Is it possible to give sufficient conditions for dissectability of $T$ along
	a curve $\ga$, which can intersect the spectrum of $T$ in a set of positive length,
	which would be applicable to an arbitrary measure $\mu$, at least for
	sufficiently smooth perturbations, in a sense to be made precise?
\end{question}

Our construction does not apply to non-dissectible measures $\mu$. Notice, however, that a
simplest and most representative measure $\mu$ of this type is one that
is absolutely continuous with respect to $\cH^1$, restricted to a smooth curve. For this case,
any finite rank perturbation of $N$ is decomposable, due to the above-cited
paper~\cite{Radjabalipour-Radjavi}. This is, in fact, much better behavior than in our case, because no
conditions on the finite rank perturbation are necessary.
So we ask whether it is possible to merge these two cases and
to design a technique which would work for arbitrary measures.

\begin{question}
	Let $N$ be a normal operator and let $\mu$ be its
	scalar spectral measure.
	For which measures $\mu$, can one prove that \emph{any} finite rank perturbation
	of $N$ has a nontrivial invariant subspace?
\end{question}

\begin{question}
	If $T$ is a finite rank perturbation of a normal operator,
	is it true that $T$ has property $(\beta)$ if and only if
	this is true for $T^*$? Is it true for a
	suitable subclass of finite rank perturbations of normals?
\end{question}

\begin{question}
	Give necessary conditions and sufficient conditions for the perturbation $T$ to be
	\emph{similar} to a normal operator.
\end{question}

Notice that Theorems \ref{thm-beta-decomp} and \ref{thm-beta-decomp-domar} imply a sort
of necessary condition (any operator similar to normal is decomposable). However,
as examples show, what we obtain is very far from optimal.

For the case studied in \cite{Y1}, it has been proved there that $T$ is similar to
$N$ if and only if $\psi\ne 0$ everywhere on $\C$. However, this cannot be true, for instance,
if $\mu$ is a discrete measure.

\begin{question}
	Given an operator $N$, the corresponding measure $\mu$ and one more measure $\nu$ on the plane,
	when is it possible to find a finite rank perturbation $T$ of $N$, similar to a normal operator $N_1$,
	whose scalar spectral measure
	is $\nu$?
\end{question}

We refer to \cite{Poltor} for answers in the case of rank one perturbations and selfadjoint
operators $N$ and $N_1$. Notice that compact perturbations of normal operators
that are normal themselves is a particular case of Voiculescu's study \cite{Vo1} of
the scattering theory for commuting tuples of selfadjoint operators.

%
%
%


\begin{thebibliography}{99}
	
	
	
	\bibitem{ABB}
	E. Abakumov, A. Baranov, Yu. Belov,
	Krein-type theorems and ordered structure for Cauchy - de Branges spaces.
	J. Funct. Anal.  277  (2019),  no. 1, 200-226.
	
	
	\bibitem{AE} E. Albrecht, J. Eschmeier, Analytic functional models and local spectral theory,
	Proc. London Math. Soc. 75 (1997), no. 2, 323--348.
	
	
	\bibitem{Ba} A. Baranov,
	Spectral theory of rank one perturbations of normal compact operators. Algebra i Analiz  30 (2018),  no. 5, 1--56; 
	English transl. in St. Petersburg Math. J. 30
	(2019), 761--802. 
	\bibitem{BY1} A. Baranov, D. Yakubovich, One-dimensional perturbations of unbounded selfadjoint operators
	with empty spectrum, J. Math. Anal. Appl. 424 (2015), 1404-1424.
	
	\bibitem{BY2} A. Baranov, D. Yakubovich, Completeness and spectral synthesis of nonselfadjoint one dimensional perturbations of selfadjoint operators,
	Adv. Math. 302 (2016), 740--798.
	
	\bibitem{BY3} A. Baranov, D. Yakubovich, Completeness of rank one perturbations of normal operators with
	lacunary spectrum, J. Spectral Theory 8 (2018), 1--32.
	
	\bibitem{Be}
	S. R. Bell,  The Cauchy transform, Potential Theory, and Conformal Mapping, CRC Press, Boca Raton, 2015.
	
	\bibitem{B} E. Bishop, A duality theorem for an arbitrary operator. Pacific J. Math 9(1959), 379-397.
	
	
	
	\bibitem{Ch} S. E. Cheremshantsev,
	Theory of scattering by a Brownian particle (Russian), 
	Trudy Mat. Inst. Steklov.  184  (1990), 5--104; English transl. in Proc. Steklov
	Inst. Math. 1991, 1--115.
	
	
	
	
	\bibitem{CF} I. Colojoara, C. Foia\c{s}, Theory of Generalized Spectral Operators, Gordon and Breach, New York, 1968.
	
	
	\bibitem{Do} Y. Domar, Uniform boundedness in families related to subharmonic functions, J Lond Math Soc (2) 38 (1988), 485-491.
	
	\bibitem{DS} N.Dunford and J. L.Schwartz, Linear operators III. Spectral Operators, Wiley Interscience, New York,  1971.
	
	\bibitem{Du}  P. Duren, Theory of $H\sp{p}$ Spaces. Pure and Applied Mathematics, Vol. 38 Academic Press, New York-London, 1970.
	
	\bibitem{DSZ} K. Dykema, F. Sukochev and D. Zanin, A decomposition theorem in $II_1$-factors,
	Journal f\"ur die Reine und Angewandte Mathematik 708 (2015), 97-114.
	
	
	\bibitem{EP} J. Eschmeier, M. Putinar, Spectral Decompositions and Analytic Sheaves, London Math. Monographs Vol. 10, Clarendon Press, Oxford, 1996.
	
	\bibitem{FX} Q. Fang, J. Xia, Invariant subspaces for certain finite-rank perturbations of diagonal operators, J. Funct. Analysis 263 (2012), 1356-1377.
	
	\bibitem{Fo} C. Foia\c{s}, Spectral maximal spaces and decomposable operators in Banach space,
	Archiv Mat. 14 (1963), 341-349.
	
	\bibitem{FJKP07}
	C. Foia\c{s}, E. B. Jung, E. Ko, C. Pearcy,
	On rank-one perturbations of normal operators. J. Funct. Anal.  253  (2007),  no. 2, 628-646.
	
	\bibitem{FJKP08}
	C. Foia\c{s}, E. B. Jung, E. Ko, C. Pearcy,
	On rank-one perturbations of normal operators. II. Indiana Univ. Math. J.  57  (2008),  no. 6, 2745-2760.
	
	\bibitem{FJKP11}
	C. Foia\c{s}, E. B. Jung, E. Ko, C. Pearcy,
	Spectral decomposability of rank-one perturbations of normal operators,
	J. Funct. Analysis 375 (2011), 602-609.
	
	\bibitem{FL}
	D. Frymark, C. Liaw,
	Spectral Analysis, Model Theory and Applications of Finite-Rank Perturbations,
	arXiv:1904.09833; 
	Operator Theory: Adv. and Applications (Book 278).
	Birkh\"auser; 1st ed. 2021 edition (November 23rd, 2020). Operator Theory,
	Operator Algebras and Their Interactions with Geometry and Topology: Ronald
	G. Douglas Memorial Volume.
	
	
	\bibitem{GK} I. Gohberg, M. Krein, Introduction to the Theory of Linear Nonselfadjoint Operators, Amer.
	Math. Soc., Providence, R.I., 1969.
	
	\bibitem{Gr} S. Grivaux, A hypercyclic rank one perturbation of a unitary
	operator, Math. Nachr., 285(2012), 533-544.
	
	\bibitem{HS} U. Haagerup, H. Schultz, Invariant subspaces for operators in a general $II_1$-factor, Publ.
	Math. Inst. Hautes Etudes Sci. 109 (2009), 19-111.
	
	\bibitem{Herrero}
	D.A. Herrero,
	Indecomposable compact perturbations of the bilateral shift,
	Proc. Amer. Math. Soc.  62  (1977),  no. 2, 254-258.
	
	\bibitem{Ionascu}
	E. Ionascu, 
	Rank-one perturbations of diagonal operators,
	Integral Equations Oper. Theory  39  (2001),  no. 4, 421-440.
	
	
	\bibitem{KS}
	N. Kerzman, E.M. Stein,
	The Cauchy kernel, the Szeg\"o kernel, and the Riemann mapping function.
	Math. Ann., 236 (1) (1978), pp. 85--93.
	
	\bibitem{KPPSS} D. Khavinson, R. Pereira, M. Putinar, E.B. Saff, S. Shimorin, Borcea's variance
	conjectures on the critical points of polynomials, in vol. Notions of Positivity and the Geometry of Polynomials
	(P. Br\"and\'en et al., eds.), Trends in Mathematics 2011, Birkh\"auser, Basel, pp 283-309.
	
	\bibitem{Klaja-JOT}
	H. Klaja, 
	Hyperinvariant subspaces for some compact perturbations of multiplication operators,
	J. Operator Theory  73  (2015),  no. 1, 127-142.
	
	
	\bibitem{Klaja-IEOT}
	H. Klaja, 
	Rank one perturbations of diagonal operators without eigenvalues,
	Integral Equations Operator Theory  83  (2015),  no. 3, 429-445.
	
	\bibitem{Livsic}
	M.S. Liv\v{s}ic, On a class of linear operators in Hilbert space.
	Mat. Sbornik 19 (61), no. 2 (1946), 239--262 (Russian); English translation in Amer.
	Math. Soc. Transl. (2)  13 (1960), 61--83.
	
	\bibitem{LM} Ju. I. Lyubi\v{c}, V. I. Macaev, Operators
	with separable spectrum, Mat. Sb. (N.S.) 56 (98)(1962),
	433-468. (in Russian).
	
	
	\bibitem{LT}
	C. Liaw, S. R. Treil,
	Matrix Measures and Finite Rank Perturbations of Self-adjoint Operators,
	arXiv:1806.08856; J. Spectral Theory, to appear.
	
	
	\bibitem{MM}
	P. Mattila,  M. S. Melnikov, Existence and weak-type inequalities for Cauchy integrals of general
	measures on rectifiable curves and sets, Proc. Amer. Math. Soc. 120(1994), 143-149.
	
	\bibitem{Miller-Miller-Neumann}
	T. L. Miller, V. G. Miller,
	M. M. Neumann,
	Local spectral properties of weighted shifts.
	J. Operator Theory  51  (2004),  no. 1, 71--88.
	
	\bibitem{Naboko}
	S. N. Naboko,
	Functional model of perturbation theory and its applications to scattering theory.
	Boundary value problems of mathematical physics, 10.
	Trudy Mat. Inst. Steklov.  147  (1980), 86--114, 203 (Russian);
	English transl. in  Proc. Steklov Inst. Math., 147 (1981), 85--116.
	
	\bibitem{Poltor}
	A. Poltoratski, Kre\v\i n's spectral shift
	and perturbations of spectra of rank one,
	Algebra and Analysis 10, no. 5 (1998),  143-183, English transl. in
	St. Petersburg Math. J., 10, no. 5 (1999), 833-859.
	
	
	\bibitem{PS} A. Poltoratski, D. Sarason, Aleksandrov-Clark measures, Contemp. Math. 393 (2006), 1-14.
	
	\bibitem{Privalov-book}     I.I. [Privalov] Priwalow, Randeigenschaften analytischer Funktionen , 
	Hochschulb\"ucher f\"ur Math. vol. 2, 
	Deutsch.
	Verlag Wiss. Berlin, 1956 
	(Translated from the 1950 Russian edition).
	
	\bibitem{Privalov-Saratov} Privalov, I. I., The Cauchy integral (in Russian),
	Saratov, 1919.
	
	\bibitem{Radjabalipour-Radjavi}
	M. Radjabalipour, H. Radjavi,
	On invariant subspaces of compact perturbations of operators.
	Rev. Roumaine Math. Pures Appl.  21  (1976), no. 9, 1247-1260.
	
	
	\bibitem{Riesz}
	F. and M. Riesz, \"Uber die Randwerte einer analytischen Funktion, Quatri\`eme Congr\`es
	des Math\'ematiciens Scandinaves, Stockholm, (1916), 27--44.
	
	
	\bibitem{Sch} J. Schwartz, Some non-selfadjoint operators. II. A family of operators
	yielding to Friedrichs' method. Comm. Pure Appl. Math. 14(1961), 619-626.
	
	\bibitem{Si} B. Simon, Spectral Analysis of Rank One
	Perturbations and Applications, Centre Recherches Math.
	Proc. vol. 8(1995), 109-142.
	
	\bibitem{Tikh}
	A. S. Tikhonov, A functional model and duality of spectral components for
	operators with a continuous spectrum on a curve. (Russian) Algebra i Analiz  14 (2002),  no. 4, 158--195;
	English transl. in St. Petersburg Math. J.  14  (2003),  no. 4, 655--682.
	
	
	\bibitem{V}
	J. Verdera, A weak type inequality for Cauchy transforms of finite measures, Publ. Mat. 36(1992), 1029-1034.
	
	\bibitem{Vo1} D. Voiculescu, Some results on norm-ideal perturbations of Hilbert space operators. I,
	J. Operator Theory 2 (1979), 3-37;
	part II, J. Operator Theory 5(1981), 77-100.
	
	\bibitem{Vo2} D.Voiculescu, $K$-theory and perturbations of absolutely continuous spectra, Comm. Math. Phys. 365 (2019), 363-373.
	
	\bibitem{Y1} D. V. Yakubovich, Spectral Properties of Smooth Perturbations of Normal Operators
	with Planar Lebesgue Spectrum, Indiana U. Math J.
	42(1993), 55-83.
	
	\bibitem{Y2} D. V. Yakubovich,
	Spectral Properties of Smooth Finite Rank Perturbations of
	a Planar Lebesgue Spectrum Cyclic Normal Operator,
	Preprint Institut Mittag-Leffler 15, 1990-1991, 1--37.
	
	
	\bibitem{Y3}  Yakubovich, D. V. Local spectral multiplicity of a linear operator with respect to measure.
	(Russian) Zap. Nauchn. Sem. S.-Peterburg. Otdel. Mat. Inst. Steklov. (POMI)  222  (1995),
	Issled. po Linein. Oper. i Teor. Funktsii. 23, 293--306, 311;
	translation in  J. Math. Sci. (New York)  87  (1997),  no. 5, 3971--3979.
	
	
	
	\bibitem{Z}
	M. Zinsmeister, Les domaines de Carleson. Michigan Math. J.  36  (1989),  no. 2, 213--220.




\end{thebibliography}
\end{document}